\def \dis {\displaystyle}
\def \NN {\mathbb N}
\def \ZZ {\mathbb Z}
\def \RR {\mathbb R}
\def \CC {\mathbb C}
\def \A {\mathcal{A}}
\def \E {\mathcal{E}}
\def \F {\mathcal{F}}
\def \L {\mathcal{L}}
\def \M {\mathcal{M}}
\def \P {\mathcal{P}}
\def \ecart {\noalign{\medskip}}
\theoremstyle{definition}
\newtheorem{Th}{Theorem}[section]
\newtheorem{Prop}[Th]{Proposition}
\newtheorem{Lem}[Th]{Lemma}
\newtheorem{Cor}[Th]{Corollary}
\newtheorem{Def}[Th]{Definition}
\newtheorem{Rem}[Th]{Remark}
\def \refS #1{Section~\ref{#1}}
\def \refD #1{Definition~\ref{#1}}
\def \refT #1{Theorem~\ref{#1}}
\def \refL #1{Lemma~\ref{#1}}
\def \refC #1{Corollary~\ref{#1}}
\def \refP #1{Proposition~\ref{#1}}
\def \refR #1{Remark~\ref{#1}}
\title{Solvability of a fourth order elliptic problem \\ in a bounded sector, part II}
\author{Rabah Labbas, St\'ephane Maingot \& Alexandre Thorel \\ \ecart
\scriptsize R. L., S. M. \& A. T.: Université Le Havre Normandie, Normandie Univ, LMAH UR 3821, 76600 Le Havre, France. \\ \ecart 
\scriptsize rabah.labbas@univ-lehavre.fr, stephane.maingot@univ-lehavre.fr, alexandre.thorel@univ-lehavre.fr}
\date{}
\begin{document}

\maketitle

\begin{abstract}

After different variables and functions changes, the generalized dispersal problem, recalled in \eqref{Pb cone fini} below and considered in part I, see Labbas, Maingot and Thorel \cite{Cone P1}, leads us to consider, to study and to invert the sum of linear operators \eqref{eq} below in a suitable Banach space by using two strategies: namely the theory of sums of operators in Banach spaces as developed by Da Prato-Grisvard \cite{daprato-grisvard} and successfully improved by Dore-Venni~\cite{dore-venni}. \\
\textbf{Key Words and Phrases}: Sum of linear operators, second and fourth order boundary value problem, functional calculus, bounded imaginary powers, maximal regularity \\
\textbf{2020 Mathematics Subject Classification}: 34G10, 35B65, 35C15, 35R20, 47A60. 
\end{abstract}

\section{Introduction and main result}\label{Sect Intro}

This work is a natural continuation of Labbas, Maingot and Thorel \cite{Cone P1}, where we have considered, for $k>0$, the following problem 
\begin{equation}\label{Pb cone fini}
\left\{ \begin{array}{ll}
\Delta ^{2}u-k\Delta u=f &\text{in }S_{\omega,\rho} \\ \ecart 
u=\dfrac{\partial u}{\partial n}=0 & \text{on }\Gamma _{0}\cup \Gamma_{\omega,\rho} \\ \ecart
u = \dfrac{\partial^2 u}{\partial n^2} = 0 & \text{on } \Gamma _{\rho},
\end{array}\right. 
\end{equation}
with
\begin{equation*}
\left\{ 
\begin{array}{lll}
S_{\omega,\rho} & = &\dis \left\{ (x,y)=(r\cos \theta ,r\sin \theta ):0<r<\rho \text{ and } 0<\theta <\omega \right\} \\
\Gamma_{0} &=& (0,\rho)  \times \left\{ 0\right\} \\ 
\Gamma_{\omega} & = & \displaystyle\left\{ (r\cos \omega ,r\sin \omega )~:~0<r<\rho
\right\} \\
\Gamma_\rho & = & \displaystyle \left\{ (\rho\cos \theta ,\rho\sin \theta )~:~0<\theta<\omega\right\},
\end{array}\right. 
\end{equation*} 
for given $\rho>0$ and $\omega \in (0,2\pi]$.

In part I, see Labbas, Maingot and Thorel \cite{Cone P1}, to study problem \eqref{Pb cone fini}, we have done many variables and functions changes to write it as a sum of linear operators. To this end, we have introduced the following functions for $t > 0$ and $(r,\theta) \in S_{\rho,\omega}$:
\begin{equation}\label{Changement de var Part I}
\left\{\begin{array}{l}
\dis v(r,\theta )=u(r\cos \theta ,r\sin \theta ) \\ \ecart
\dis G(t)(\theta) := G(t,\theta) = g(\rho e^{-t},\theta) = f\left(\rho e^{-t} \cos \theta, \rho e^{-t} \sin \theta\right) \\ \ecart
\dis \phi(t)(\theta) := \phi (t,\theta )= \frac{v(\rho e^{-t},\theta )}{\rho e^{-t}} \\ \ecart
\dis H(t)(\theta) := H(t,\theta) = e^{-3t}G(t)(\theta),
\end{array}\right.
\end{equation}
and the two abstract vector-valued functions
\begin{equation}\label{Changement de var Part I 2}
V(t) = \left( 
\begin{array}{c}
e^{\nu t}\phi (t) \\ 
e^{\nu t} \phi''(t)
\end{array}\right), \quad \F_\nu(t) = \left(\begin{array}{c}
0 \\
\rho^3 e^{\nu t} H(t)
\end{array} \right), \quad \nu = 3 - \frac{2}{p} \in (1,3) \text{ with }p > 1.
\end{equation}
By considering the Banach space
$$X = W_{0}^{2,p}(0,\omega) \times L^{p}(0,\omega),$$
and after the changes indicated above in \eqref{Changement de var Part I} and \eqref{Changement de var Part I 2}, we have rewritten problem \eqref{Pb cone fini} in the space $L^p(0,+\infty;X)$, see Labbas, Maingot and Thorel \cite{Cone P1}, in the following form
$$
\left(\L_{1,\nu} + \L_2\right) V + k \rho^2 \left(\P_1 + \P_{2,\nu} \right)V = \F_\nu,
$$
where 
\begin{equation*}
\left\{ \begin{array}{cll}
D(\mathcal{L}_{1,\nu}) & = & \dis \left\{ V\in W^{2,p}(0,+\infty ;X): V(0) = V(+\infty) = 0\right\} \\ \ecart
\left[ \mathcal{L}_{1,\nu}(V)\right] (t) & = & \dis \left( \partial_{t}-\nu I\right)^{2}V(t)=V''(t)-2\nu V'(t)+\nu^{2}V(t),
\end{array}\right.
\end{equation*}
\begin{equation*}
\left\{ 
\begin{array}{lll}
D(\mathcal{L}_{2}) & = &\dis \left\{ V\in L^{p}(0,+\infty ;X): \text{for }a.e.~t\in
(0,+\infty ),~ V(t)\in D(\mathcal{A})\right\} \\ \ecart
\left[ \mathcal{L}_{2}(V)\right] (t) & = & -\mathcal{A}V(t),
\end{array}
\right.
\end{equation*}
with
\begin{equation*}
\left\{ 
\begin{array}{lll}
D(\mathcal{A}) & = & \dis \left[ W^{4,p}(0,\omega)\cap W_{0}^{2,p}(0,\omega)\right] \times W_{0}^{2,p}(0,\omega)\subset X \\ \ecart
\mathcal{A}\left( 
\begin{array}{c}
\psi _{1} \\ 
\psi _{2}
\end{array}
\right) &=& \left( 
\begin{array}{c}
\psi _{2} \\ 
-\left( \dfrac{\partial ^{2}}{\partial \theta ^{2}}+1\right) ^{2}\psi
_{1}-2\left( \dfrac{\partial ^{2}}{\partial \theta ^{2}}-1\right) \psi _{2}%
\end{array}
\right), \quad \left( 
\begin{array}{c}
\psi _{1} \\ 
\psi _{2}
\end{array}
\right) \in D(\A),
\end{array}
\right.
\end{equation*}
and
\begin{equation*}
\left\{ 
\begin{array}{lll}
D(\mathcal{P}_{1}) & = &\dis \left\{ V\in L^{p}(0,+\infty ;X): \text{for }a.e.~t\in
(0,+\infty ),~ V(t)\in D(\mathcal{A}_0)\right\} \\ \ecart
\left[ \mathcal{P}_{1}(V)\right] (t) & = & -e^{-2t} \mathcal{A}_{0}V(t),
\end{array}
\right.
\end{equation*}
with
\begin{equation*}
\left\{ 
\begin{array}{lll}
D(\mathcal{A}_{0}) & = & W_{0}^{2,p}(0,\omega) \times L^{p}(0,\omega) = X \\ \ecart
\mathcal{A}_{0}\left( 
\begin{array}{c}
\psi _{1} \\ 
\psi _{2}
\end{array}
\right) & = & \dis \left( 
\begin{array}{c}
0 \\ 
\left( \dfrac{\partial ^{2}}{\partial \theta ^{2}}+1\right) \psi _{1}+\psi_{2}
\end{array}
\right), \quad \left( 
\begin{array}{c}
\psi _{1} \\ 
\psi _{2}
\end{array}
\right) \in D(\A_0),
\end{array}\right.
\end{equation*}
and
\begin{equation*}
\left\{ 
\begin{array}{lll}
D(\mathcal{P}_{2,\nu}) & = &\dis W^{1,p}(0,+\infty ;X) \\ \ecart
\left[ \mathcal{P}_{2,\nu}(V)\right] (t) & = & -e^{-2t}\left( \mathcal{B}_{2,\nu}V\right) (t),
\end{array}\right.
\end{equation*}
with
\begin{equation*}
\mathcal{B}_{2,\nu}=\left(\begin{array}{cc}
0 & 0 \\ 
-2(\partial_{t}-\nu I) & 0
\end{array}\right).
\end{equation*}
In the present paper, we will focus ourselves on the resolution of the following abstract equation
\begin{equation}\label{eq}
\left(\L_{1,\mu} + \L_2\right) V + k \rho^2 \left(\P_1 + \P_{2,\mu} \right)V = \F,
\end{equation}
where $\mu \in \RR$ is a general parameter and $\F \in L^p(0,+\infty;X)$. 

Note that, in part I, see Labbas, Maingot and Thorel \cite{Cone P1}, subsection 3.4, we have worked with $\mu = \nu = 3- \dfrac{2}{p}$ which comes from the variable change concerning the weighted Sobolev space. Here, in this second part, we consider a more general $\mu \in \RR$.

The aim of this work is to show that there exists a unique classical solution of \eqref{eq} that is a function $V$ such that
$$V \in W^{2,p}(0,+\infty;X)\cap L^p(0,+\infty;D(\A)).$$
This regularity is necessary to deduce all those of the function $v$ stated in Theorem 2.2 in Labbas, Maingot and Thorel \cite{Cone P1}.

To this end, we will use the Da Prato-Grisvard sum theory in order to invert $\overline{\L_{1,\mu} + \L_2}$. Then, we solve the following equation 
$$\left(\overline{\L_{1,\mu} + \L_2}\right) V + k \rho^2 \left(\P_1 + \P_{2,\mu} \right)V = \F,$$
by using a perturbation argument. Next, we use some convexity inequalities to prove that $V$ belongs to a more suitable space, more precisely 
$$V \in W^{1,p}(0,+\infty;X) \cap L^{p}\left( 0,+\infty ;\left[ W^{3,p}(0,\omega)\cap W_{0}^{2,p}(0,\omega)\right] \times L^p(0,\omega)\right).$$
At this step, $V$ is the unique strong solution of \eqref{eq}, see \eqref{Def strong solution}. To obtain that $V$ is a classical solution, we use the Dore-Venni sum theory, see \refS{Sect Proof of main Th}.

Among the results that we will use is the fact that the roots of the equation
$$\left(\sinh(z) + z\right)\left(\sinh(z) - z\right) = 0,$$
in $\CC_+:=\{w \in \CC : \text{Re}(w) > 0\}$, constitute a family of complex numbers $(z_j)_{j\geqslant 1}$ such that 
\begin{equation*}
\tau := \min_{j\geqslant 1}\left|\text{Im}(z_j)\right| > 0  \quad \text{and} \quad |z_j| \longrightarrow + \infty.
\end{equation*}
These roots are computed in F\"adle \cite{fadle} with $\tau \simeq 4.21239$.

Our main result is the following.
\begin{Th}\label{Th principal}
Let $\F \in L^p(0,+\infty;X)$ and assume that 
\begin{equation}\label{hyp inv sum}
\omega\mu < \tau.
\end{equation}
Then, there exists $\rho_0 > 0$ such that for all $\rho \in (0,\rho_0]$, the abstract equation 
\begin{equation*}
\left(\L_{1,\mu} + \L_2\right) V + k \rho^2 \left(\P_1 + \P_{2,\mu} \right)V = \F,
\end{equation*}
has a unique classical solution $V \in L^p(0,+\infty;X)$, that is 
$$V \in W^{2,p}(0,+\infty;X)\cap L^p(0,+\infty;D(\A)).$$
In particular, $\L_{1,\mu} + \L_2$ is closed and $V \in D(\L_{1,\mu} + \L_2)$.
\end{Th}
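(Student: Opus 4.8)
The plan is to follow the four-step strategy announced before the statement. First I would establish that the closure $\overline{\L_{1,\mu}+\L_2}$ is invertible with bounded inverse on $L^p(0,+\infty;X)$ via the Da Prato--Grisvard sum theory. This requires checking that $\L_{1,\mu}$ and $\L_2$ are each of positive type (sectorial with resolvent decay), that their resolvents commute, and that a spectral-separation (parabola--sector) angle condition holds. The operator $\L_{1,\mu}=(\partial_t-\mu I)^2$ with the homogeneous conditions $V(0)=V(+\infty)=0$ has spectrum governed by $\{(\mu+is)^2 : s \in \RR\}$, while $-\L_2=\A$ is essentially a multiplication operator whose spectrum is fixed by the eigenvalue problem for the fourth-order operator in $\theta$, i.e.\ by the roots $(z_j)$ of $(\sinh z+z)(\sinh z-z)=0$. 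The hypothesis $\omega\mu<\tau$ is exactly what guarantees that the spectrum of $\A$, lying in a sector whose opening is controlled by $\tau/\omega$, stays disjoint from and properly separated from the spectrum of $\L_{1,\mu}$, so that the Da Prato--Grisvard angle condition is met.

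Second, I would treat $k\rho^2(\P_1+\P_{2,\mu})$ as a perturbation. The point is that $\P_1$ and $\P_{2,\mu}$ should be $(\overline{\L_{1,\mu}+\L_2})$-bounded, so that $(\P_1+\P_{2,\mu})(\overline{\L_{1,\mu}+\L_2})^{-1}$ is a bounded operator; the factor $e^{-2t}$ in their definition together with their lower differential order (second order in $\theta$ for $\P_1$, first order in $t$ for $\P_{2,\mu}$) makes this relative boundedness plausible. Since the prefactor carries $\rho^2$, choosing $\rho_0$ small enough forces $\|k\rho^2(\P_1+\P_{2,\mu})(\overline{\L_{1,\mu}+\L_2})^{-1}\|<1$ for all $\rho\in(0,\rho_0]$, so $I+k\rho^2(\P_1+\P_{2,\mu})(\overline{\L_{1,\mu}+\L_2})^{-1}$ is boundedly invertible by a Neumann series. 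Composing with $(\overline{\L_{1,\mu}+\L_2})^{-1}$ then produces a unique $V$ solving $(\overline{\L_{1,\mu}+\L_2})V+k\rho^2(\P_1+\P_{2,\mu})V=\F$.

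Third, the $V$ obtained so far lies a priori only in $D(\overline{\L_{1,\mu}+\L_2})$, which may be strictly larger than $D(\L_{1,\mu})\cap D(\L_2)$. To promote $V$ to a strong solution I would use convexity (interpolation) inequalities, of the type $\|w\|_{W^{1,p}}\leqslant C\,\|w\|_{L^p}^{1/2}\|w\|_{W^{2,p}}^{1/2}$ and their analogues in the $\theta$-variable, to control the intermediate norms and conclude that $V$ belongs to the strong-solution space $W^{1,p}(0,+\infty;X)\cap L^{p}(0,+\infty;[W^{3,p}(0,\omega)\cap W_{0}^{2,p}(0,\omega)]\times L^p(0,\omega))$. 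Finally, to reach the full classical regularity $V\in W^{2,p}(0,+\infty;X)\cap L^p(0,+\infty;D(\A))$, equivalently $V\in D(\L_{1,\mu})\cap D(\L_2)$ with $\L_{1,\mu}+\L_2$ already closed so no closure is needed, I would invoke the Dore--Venni sum theory. This requires that both $\L_{1,\mu}$ and $\L_2$ admit bounded imaginary powers on $L^p(0,+\infty;X)$ with power angles whose sum is strictly less than $\pi$; here again $\omega\mu<\tau$ should be what keeps the relevant angles small enough. Dore--Venni then yields closedness of $\L_{1,\mu}+\L_2$ and membership of $V$ in the intersection of the two domains, which is precisely the claimed classical regularity, while uniqueness follows from the injectivity already established.

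I expect the main obstacle to be the verification of the hypotheses of the two sum theories in this vector-valued, non-self-adjoint setting: pinning down the spectrum and resolvent estimates of $\A$ through the transcendental equation $(\sinh z+z)(\sinh z-z)=0$, and showing that the separation encoded by $\tau$ combined with $\omega\mu<\tau$ delivers the Da Prato--Grisvard angle condition, and then establishing the bounded imaginary powers of $\L_{1,\mu}$ and $\L_2$ with a controlled sum of angles for Dore--Venni. The perturbation and convexity steps are, by comparison, routine once the sum-theory machinery is in place.
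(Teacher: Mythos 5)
Your first three steps are exactly the paper's: Da Prato--Grisvard (in Grisvard's extended form with $R>0$, which is needed precisely because $\sigma(\L_{1,\mu})$ is not confined to a sector near the origin) inverts $\overline{\L_{1,\mu}+\L_2}$ under \eqref{hyp inv sum}; the convexity inequalities give $D\left(\overline{\L_{1,\mu}+\L_2}\right)\subset \E_1\cap\E_2$; and this inclusion is what makes $(\P_1+\P_{2,\mu})\left(\overline{\L_{1,\mu}+\L_2}\right)^{-1}$ bounded, so that a Neumann series yields the unique strong solution for $\rho\in(0,\rho_0]$. Note only that the convexity step must logically precede the perturbation step, since it is what turns the relative boundedness you call ``plausible'' into a proof.

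The genuine gap is your final step. You propose to apply Dore--Venni directly to $\L_{1,\mu}+\L_2$, which would require $\L_{1,\mu}$ to have bounded imaginary powers. For $\mu>0$ (the case the theorem is designed for: $\mu=\nu\in(1,3)$ in part I) this is impossible: as recalled in the proof of \refT{Th L1+L2 barre inversible}, $\sigma(\L_{1,\mu})=\left\{\lambda\in\CC \,:\, |\arg(\lambda)|<\pi \text{ and } \mathrm{Re}(\sqrt{\lambda})\leqslant\mu\right\}$ contains the punctured disk $\{0<|\lambda|<\mu^{2}\}\setminus\RR_-$ (there $\mathrm{Re}(\sqrt{\lambda})\leqslant\sqrt{|\lambda|}<\mu$), and a set containing spectral points of every argument in $(-\pi,\pi)$ cannot lie in any sector $\overline{S_\alpha}$, $\alpha<\pi$; the same holds for $-\L_{1,\mu}$. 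So $\pm\L_{1,\mu}$ is not sectorial and cannot belong to any class BIP$(\E,\kappa)$. Relatedly, your guess that $\omega\mu<\tau$ ``keeps the BIP angles small'' is off: that hypothesis is exactly the spectral separation $(H_2)$ for Da Prato--Grisvard and plays no role in any BIP estimate. The missing idea --- and the paper's actual argument --- is to use the regularity \eqref{Reg V} already gained in order to absorb every $\mu$-dependent and perturbation term into the right-hand side: along an approximating sequence $V_n$ of the strong solution, the equation becomes $V_n''-\A V_n\to\F_\infty$ in $\E$, with $\F_\infty = k\rho^{2}e^{-2t}\A_0 V + k\rho^{2}e^{-2t}\B_{2,\mu}V + 2\mu V' - \mu^{2}V + \F$, which belongs to $\E$ precisely because $V\in W^{1,p}(0,+\infty;X)\cap\E_2$. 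The sum theory is then applied to the $\mu$-free operators: $-\delta_2$ (pure second derivative with the boundary conditions), which has BIP of any angle $\theta\in(0,\pi)$ by Prüss--Sohr \cite{pruss-sohr2}, and $\A$, which has BIP of any angle by \refP{Prop A BIP}, so the angle condition of Dore--Venni/Prüss--Sohr holds with no hypothesis at all. Existence of a classical solution $\mathcal{V}$ of $\mathcal{V}''-\A\mathcal{V}=\F_\infty$, $\mathcal{V}(0)=\mathcal{V}(+\infty)=0$, comes from Eltaief--Maingot \cite{amine} with $L_1=L_2=-\sqrt{\A}$, and the invertibility of $-\delta_2+\A$ (deduced from $0\in\rho(\A)$) forces $V_n-\mathcal{V}\to 0$, hence $V=\mathcal{V}$. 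This identification, not a direct sum theorem for $\L_{1,\mu}+\L_2$, is what yields the classical regularity and, as a by-product, the closedness of $\L_{1,\mu}+\L_2$.
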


This second part is organized as follows. \refS{Sect Def} is devoted to recalling some needed results. In \refS{Sect Spectral study of L1 and L2}, we analyze the spectral properties of operators $\L_{1,\mu}$ and $\L_2$ in view to study the invertibility of $\overline{\L_{1,\mu}+\L_2}$ in \refS{Sect study of L1+L2}. In \refS{Sect Back to Abstract Pb}, by considering that operator $k\rho^2\left(\P_1+\P_{2,\mu}\right)$ is a perturbation, we deduce the existence and the uniqueness of a strong solution of equation \eqref{eq}. Finally, \refS{Sect Proof of main Th} is devoted to the proof of our main result given in \refT{Th principal}.

\section{Definitions and prerequisites} \label{Sect Def}

\subsection{The class of Bounded Imaginary Powers of operators}

\begin{Def}\label{Def Banach}
A Banach space $E$ is a UMD space if and only if for all $p\in(1,+\infty)$, the Hilbert transform is bounded from $L^p(\RR,E)$ into itself (see Bourgain \cite{bourgain} and Burkholder \cite{burkholder}).
\end{Def}
\begin{Def}\label{Def op Sect}
Let $\alpha \in(0,\pi)$. Sect($\alpha$) denotes the space of closed linear operators $T_1$ which satisfying
$$
\begin{array}{l}
i)\quad \sigma(T_1)\subset \overline{S_{\alpha}},\\ \ecart
ii)\quad\forall~\alpha'\in (\alpha,\pi),\quad \sup\left\{\|\lambda(\lambda\,I-T_1)^{-1}\|_{\L(X)} : ~\lambda\in\CC\setminus\overline{S_{\alpha'}}\right\}<+\infty,
\end{array}
$$
where
\begin{equation}\label{defsector}
S_\alpha\;:=\;\left\{\begin{array}{lll}
\left\{ z \in \CC : z \neq 0 ~~\text{and}~~ |\arg(z)| < \alpha \right\} & \text{if} & \alpha \in (0, \pi] \\ \ecart
\,(0,+\infty) & \text{if} & \alpha = 0,
\end{array}\right.
\end{equation} 
see Haase \cite{haase}, p. 19. Such an operator $T_1$ is called sectorial operator of angle $\alpha$.
\end{Def}
\begin{Rem}
From Komatsu \cite{komatsu}, p. 342, we know that any injective sectorial operator~$T_1$ admits imaginary powers $T_1^{is}$ for all $s\in\RR$; but in general, $T_1^{is}$ is not bounded.
\end{Rem} 
\begin{Def}\label{Def BIP}
Let $\kappa \in [0, \pi)$. We denote by BIP$(E,\kappa)$, the class of sectorial injective operators $T_2$ such that
\begin{itemize}
\item[] $i) \quad ~~\overline{D(T_2)} = \overline{R(T_2)} = E,$

\item[] $ii) \quad ~\forall~ s \in \RR, \quad T_2^{is} \in \L(E),$

\item[] $iii) \quad \exists~ C \geqslant 1 ,~ \forall~ s \in \RR, \quad ||T_2^{is}||_{\L(E)} \leqslant C e^{|s|\kappa}$,
\end{itemize}
see~Prüss and Sohr \cite{pruss-sohr}, p. 430.
\end{Def}

\subsection{Recall on the sum of linear operators}\label{sect M1 M2}

Let us fix a pair of two closed linear densely defined operators $\mathcal{M}_{1}$ and $\mathcal{M}_{2}$ in a general Banach space $\mathcal{E}.$ We note their domains by $D(\mathcal{M}_{1})$ and $D(\mathcal{M}_{2})$ respectively. Then we can define their sum by 
\begin{equation*}
\left\{ \begin{array}{l}
\mathcal{M}_{1}w+\mathcal{M}_{2}w \\ 
w\in D(\mathcal{M}_{1})\cap D(\mathcal{M}_{2}).
\end{array}\right. 
\end{equation*}
We assume the following hypotheses
\begin{itemize}

\item[$(H_1)$] There exist $\theta_{\M_1} \in [0,\pi)$, $\theta
_{\M_{2}}\in [0,\pi)$, $C > 0$ and $R > 0$ such that  
\begin{equation*}
\left\{ 
\begin{array}{l}
\rho \left( \mathcal{M}_{1}\right) \supset \Sigma _{1,R}=\left\{ z\in \CC\setminus \left\{ 0\right\} : |z| \geqslant R \text{ and }\left\vert \arg (z)\right\vert <\pi -\theta
_{\M_{1}}\right\}  \\ 
\forall \,z\in \Sigma_{1,R}, \quad \left\Vert \left( \mathcal{M}_{1}-zI\right) ^{-1}\right\Vert \leqslant \dfrac{C}{\left\vert z\right\vert},
\end{array}\right. 
\end{equation*}
and
\begin{equation*}
\left\{ 
\begin{array}{l}
\rho \left( \mathcal{M}_{2}\right) \supset \Sigma _{2,R}=\left\{ z\in \CC\setminus \left\{ 0\right\} : |z| \geqslant R \text{ and }\left\vert \arg (z)\right\vert <\pi -\theta_{\M_{2}}\right\}  \\ 
\forall \,z\in \Sigma_{2,R}, \quad \left\Vert \left( \mathcal{M}_{2}-zI\right) ^{-1}\right\Vert \leqslant \dfrac{C}{\left\vert z\right\vert},
\end{array}\right.  
\end{equation*}
with
\begin{equation*}
\theta _{\M_{1}}+\theta _{\M_{2}}<\pi.
\end{equation*}

\item[$(H_2)$] $\sigma(\M_1) \cap \sigma(-\M_2) = \emptyset$.

\item[$(H_3)$] The resolvents of $\mathcal{M}_{1}$ and $\mathcal{M}_{2}$
commute, that is
\begin{equation*}
\left( \mathcal{M}_{1}-\lambda_1 I\right) ^{-1}\left( \mathcal{M}_{2}-\lambda_2
I\right) ^{-1}=\left( \mathcal{M}_{2}-\lambda_2 I\right) ^{-1}\left( \mathcal{M}_{1}-\lambda_1 I\right) ^{-1},
\end{equation*}
for all $\lambda_1 \in \rho \left( \mathcal{M}_{1}\right) $ and all $\lambda_2 \in
\rho \left( \mathcal{M}_{2}\right)$.
\end{itemize}

\begin{Rem}
Note that from $(H_2)$, we have $\rho \left(\mathcal{M}_{1}\right) \cup \rho \left(- \mathcal{M}_{2}\right) = \CC$ and in particular $\M_1$ or $\M_2$ is boundedly invertible.  
\end{Rem}

\begin{Th}[Da Prato and Grisvard \cite{daprato-grisvard}, Grisvard \cite{grisvard 2}] \label{Th daprato-grisvard}
Assume that $(H_1)$, $(H_2)$ and $(H_3)$ hold. Then, operator $\M_{1}+\M_{2}$ is closable. Its closure $\overline{\mathcal{M}_{1}+\mathcal{M}_{2}}$ is boundedly invertible and 
\begin{equation}\label{int M1+M2}
\left( \overline{\mathcal{M}_{1}+\mathcal{M}_{2}}\right)^{-1}=\frac{-1}{2i \pi }\int\limits_{\Gamma }\left( \mathcal{M}_{1}-zI\right)^{-1}\left( \mathcal{M}_{2}+zI\right) ^{-1}dz;
\end{equation}
where $\Gamma$ is a path which separates $\sigma \left(\mathcal{M}_{1}\right)$ and $\sigma \left(-\mathcal{M}_{2}\right)$ and joins $\infty e^{-i\theta _{0}}$ to $\infty e^{i\theta _{0}}$ with $\theta _{0}$ such that
\begin{equation*}
\theta _{\M_{1}}<\theta _{_{0}}<\pi -\theta _{\M_{2}}.
\end{equation*}
\end{Th}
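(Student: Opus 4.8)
The plan is to realize the four-stage strategy sketched in the introduction: invert $\overline{\L_{1,\mu}+\L_2}$ by the sum theory, absorb $k\rho^2(\P_1+\P_{2,\mu})$ as a perturbation, bootstrap the regularity by convexity inequalities, and finally pass from a strong to a classical solution through Dore-Venni.

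First I would apply \refT{Th daprato-grisvard} to the pair $(\L_{1,\mu},\L_2)$. This rests on the spectral analysis of \refS{Sect Spectral study of L1 and L2}: one shows that $\L_{1,\mu}$, built from $(\partial_t-\mu I)^2$ with the conditions $V(0)=V(+\infty)=0$, is sectorial with a resolvent decay $\|(\L_{1,\mu}-zI)^{-1}\|\leqslant C/|z|$, and likewise that $\L_2=-\A$ is sectorial, its spectrum being controlled by the eigenvalues of the fourth-order angular operator on $(0,\omega)$, i.e. by the F\"adle roots $(z_j)$ of $(\sinh z+z)(\sinh z-z)=0$. The half-width $\tau=\min_{j\geqslant 1}|\text{Im}(z_j)|$ pins down the sectoriality angle $\theta_{\L_2}$, while $\theta_{\L_{1,\mu}}$ grows with $\mu$; hypothesis \eqref{hyp inv sum}, $\omega\mu<\tau$, is precisely what delivers the angular condition $\theta_{\L_{1,\mu}}+\theta_{\L_2}<\pi$ of $(H_1)$ and the separation $(H_2)$, namely $\sigma(\L_{1,\mu})\cap\sigma(-\L_2)=\emptyset$; the commutation $(H_3)$ comes from the product structure. \refT{Th daprato-grisvard} then yields that $\overline{\L_{1,\mu}+\L_2}$ is boundedly invertible, with inverse given by the contour integral \eqref{int M1+M2}.

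Second, following \refS{Sect Back to Abstract Pb}, I would treat $k\rho^2(\P_1+\P_{2,\mu})$ as a perturbation. Rewriting \eqref{eq} with the closure and composing with $(\overline{\L_{1,\mu}+\L_2})^{-1}$, it suffices that $k\rho^2(\P_1+\P_{2,\mu})(\overline{\L_{1,\mu}+\L_2})^{-1}$ be a strict contraction on $L^p(0,+\infty;X)$. The factor $e^{-2t}$ in $\P_1$ and $\P_{2,\mu}$, together with boundedness of $\A_0(\overline{\L_{1,\mu}+\L_2})^{-1}$ and of $(\partial_t-\mu I)(\overline{\L_{1,\mu}+\L_2})^{-1}$, produces a bound $C(k)\rho^2$; choosing $\rho_0$ with $C(k)\rho_0^2<1$, a Neumann series argument makes $I+k\rho^2(\P_1+\P_{2,\mu})(\overline{\L_{1,\mu}+\L_2})^{-1}$ boundedly invertible for all $\rho\in(0,\rho_0]$, giving a unique $V$ solving $(\overline{\L_{1,\mu}+\L_2})V+k\rho^2(\P_1+\P_{2,\mu})V=\F$.

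Third, since the Da Prato-Grisvard inverse only places $V$ in $D(\overline{\L_{1,\mu}+\L_2})$, I would use convexity (interpolation) inequalities between $X$, $D(\A)$ and the relevant intermediate space to show that in fact $V\in W^{1,p}(0,+\infty;X)\cap L^p(0,+\infty;[W^{3,p}(0,\omega)\cap W_0^{2,p}(0,\omega)]\times L^p(0,\omega))$; at this stage $V$ is the unique strong solution of \eqref{eq}. Finally, to reach the classical regularity, I would invoke the Dore-Venni theorem (\refS{Sect Proof of main Th}): $X$, hence $L^p(0,+\infty;X)$, is a UMD space, and it remains to check that $\L_{1,\mu}$ and $\L_2$ belong to the BIP class of \refD{Def BIP} with power-angles $\kappa_1,\kappa_2$ such that $\kappa_1+\kappa_2<\pi$ — an angular budget again guaranteed by \eqref{hyp inv sum}. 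Dore-Venni then gives that $\L_{1,\mu}+\L_2$ is closed with $D(\L_{1,\mu}+\L_2)=D(\L_{1,\mu})\cap D(\L_2)$, so $\overline{\L_{1,\mu}+\L_2}=\L_{1,\mu}+\L_2$ and $V$ belongs to $W^{2,p}(0,+\infty;X)\cap L^p(0,+\infty;D(\A))$, which is the claimed classical solution and yields the last assertion. The main obstacle, I expect, is the first stage: proving the uniform resolvent bounds of $(H_1)$ for $\L_2$ with the sharp angle forces a delicate study of the fourth-order angular operator and of the exact position of its spectrum relative to the F\"adle roots — this is where $\tau$ and the threshold $\omega\mu<\tau$ are genuinely used, and the same spectral description underlies the BIP estimates needed for Dore-Venni.
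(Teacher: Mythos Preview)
You have proved the wrong statement. The theorem at hand is \refT{Th daprato-grisvard}, the abstract Da Prato--Grisvard result on sums of operators $\M_1,\M_2$ satisfying $(H_1)$--$(H_3)$; the paper does not prove it at all but simply cites Da Prato--Grisvard \cite{daprato-grisvard}, Theorem~3.7, p.~324 (case $R=0$) and Grisvard \cite{grisvard 2}, Theorem~2.1, p.~7 (case $R\geqslant 0$). What you have written is instead a sketch of the proof of \refT{Th principal}, the main theorem of the paper, which \emph{applies} \refT{Th daprato-grisvard} as a black box to the particular pair $(\L_{1,\mu},\L_2)$. None of the material you discuss --- the spectral study of $\L_{1,\mu}$ and $\L_2$, the F\"adle roots, the hypothesis $\omega\mu<\tau$, the perturbation by $k\rho^2(\P_1+\P_{2,\mu})$, the convexity inequalities, the Dore--Venni step --- appears in or is relevant to a proof of \refT{Th daprato-grisvard} itself, which is a general functional-analytic statement independent of the concrete operators of this paper.

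Even read as a sketch of \refT{Th principal}, your outline diverges from the paper at the last step. The paper does \emph{not} establish BIP for $\L_{1,\mu}$ and invoke Dore--Venni directly on the pair $(\L_{1,\mu},\L_2)$ to close the sum. Instead, in \refS{Sect Proof of main Th} it rewrites the limiting equation as $V''-\A V=\F_\infty$, solves this auxiliary problem classically via Eltaief--Maingot \cite{amine} using $\sqrt{\A}\in\mathrm{BIP}(X,\theta_\A/2)$, and then shows $V=\mathcal V$ by proving that $-\delta_2+\A$ is boundedly invertible, where $-\delta_2\in\mathrm{BIP}$ by Pr\"uss--Sohr \cite{pruss-sohr2} and $\A\in\mathrm{BIP}$ by \refP{Prop A BIP}. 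The BIP property of $\L_{1,\mu}$ is never claimed, and the angular condition at that stage has nothing to do with \eqref{hyp inv sum}; that hypothesis is used only to separate $\sigma(\L_{1,\mu})$ from $\sigma(-\L_2)$ in \refT{Th L1+L2 barre inversible}.
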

This Theorem is proved in Da Prato and Grisvard \cite{daprato-grisvard} (Theorem 3.7, p. 324), when $R = 0$ and has been extended to the case $R\geqslant 0$ in Grisvard \cite{grisvard 2} (Theorem 2.1, p. 7). In this last case, the curve $\Gamma$ does not need to be connected. 

\begin{Cor} \label{Cor inv fermeture somme}
Assume that $(H_1)$, $(H_2)$ and $(H_3)$ hold. Let $i = 1,2$ and $\E_i$ a Banach space with $D(\M_i) \hookrightarrow \E_i \hookrightarrow \mathcal{E}$. We suppose that there exist $C>0$ and $\delta \in (0,1)$ such that 
\begin{equation}\label{ineg norme w}
\left\{ 
\begin{array}{l}
\left\Vert w\right\Vert _{\mathcal{E}_i}\leqslant C\left( \left\Vert
w\right\Vert _{\mathcal{E}}+\left\Vert w\right\Vert _{\mathcal{E}}^{1-\delta}\left\Vert \mathcal{M}_{i}w\right\Vert _{\mathcal{E}}^{\delta }\right)  \\ 
\text{for every }w\in D(\mathcal{M}_{i}).
\end{array}\right. 
\end{equation}
Then $D\left( \overline{\mathcal{M}_{1}+\mathcal{M}_{2}}\right) \subset \E_i.$
\end{Cor}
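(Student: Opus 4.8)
The plan is to feed the Da Prato--Grisvard integral representation \eqref{int M1+M2} into the interpolation inequality \eqref{ineg norme w}. Fix $i=1$ (the case $i=2$ is symmetric, treated at the end) and let $w\in D(\overline{\M_1+\M_2})$. Since $\overline{\M_1+\M_2}$ is boundedly invertible by \refT{Th daprato-grisvard}, I write $w=(\overline{\M_1+\M_2})^{-1}f$ with $f=(\overline{\M_1+\M_2})w\in\E$, so that
\[
w=\frac{-1}{2i\pi}\int_{\Gamma}(\M_1-zI)^{-1}(\M_2+zI)^{-1}f\,dz
\]
with convergence in $\E$. For $z\in\Gamma$ I set $v_z:=(\M_1-zI)^{-1}(\M_2+zI)^{-1}f$; since $(\M_2+zI)^{-1}f\in\E$, we have $v_z\in D(\M_1)$, so \eqref{ineg norme w} applies to each $v_z$.

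Next I estimate the two quantities entering \eqref{ineg norme w}. On $\Gamma$ both resolvents obey $\|(\M_1-zI)^{-1}\|,\|(\M_2+zI)^{-1}\|\leqslant C/|z|$ (this is exactly the bound making \eqref{int M1+M2} convergent, coming from $(H_1)$ together with the choice of $\theta_0$), whence $\|v_z\|_{\E}\leqslant C^2|z|^{-2}\|f\|_{\E}$. Using the algebraic identity $\M_1(\M_1-zI)^{-1}=I+z(\M_1-zI)^{-1}$ gives
\[
\M_1 v_z=(\M_2+zI)^{-1}f+z\,v_z,
\]
so that $\|\M_1 v_z\|_{\E}\leqslant C|z|^{-1}\|f\|_{\E}+|z|\,C^2|z|^{-2}\|f\|_{\E}\leqslant C'|z|^{-1}\|f\|_{\E}$. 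Substituting these bounds into \eqref{ineg norme w}, together with $\|v_z\|_{\E}^{1-\delta}\|\M_1 v_z\|_{\E}^{\delta}\leqslant C''|z|^{-2(1-\delta)-\delta}\|f\|_{\E}=C''|z|^{-(2-\delta)}\|f\|_{\E}$, yields
\[
\|v_z\|_{\E_1}\leqslant \frac{C'''}{|z|^{\,2-\delta}}\,\|f\|_{\E},\qquad z\in\Gamma.
\]

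The final step is a convergence and identification argument. Since $\delta\in(0,1)$ we have $2-\delta>1$, and as $\Gamma$ goes to infinity radially, $\int_{\Gamma}|z|^{-(2-\delta)}\,|dz|<+\infty$. Hence $z\mapsto v_z$ is dominated in the $\E_1$-norm by an integrable function, so the integral $\frac{-1}{2i\pi}\int_{\Gamma}v_z\,dz$ converges absolutely in $\E_1$; call its value $\widetilde{w}\in\E_1$. Because $\E_1\hookrightarrow\E$, this integral also converges to $\widetilde{w}$ in $\E$, and comparing with the $\E$-convergence above forces $\widetilde{w}=w$. Therefore $w\in\E_1$, with $\|w\|_{\E_1}\leqslant C_0\|(\overline{\M_1+\M_2})w\|_{\E}$, which proves $D(\overline{\M_1+\M_2})\subset\E_1$. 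For $i=2$ I would use $(H_3)$ to rewrite $v_z=(\M_2+zI)^{-1}(\M_1-zI)^{-1}f\in D(\M_2)$ and the identity $\M_2 v_z=(\M_1-zI)^{-1}f-z\,v_z$; the two estimates are then identical and the same conclusion follows.

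The point that makes the argument go through is the exponent bookkeeping: the plain integrand decays like $|z|^{-2}$, and measuring it in the stronger norm $\E_i$ via \eqref{ineg norme w} costs one factor $|z|^{\delta}$, leaving decay $|z|^{-(2-\delta)}$ with $2-\delta>1$, still integrable. The only genuinely technical points are checking that the resolvent estimates $C/|z|$ hold on the whole of $\Gamma$ (the bounded part of $\Gamma$ is harmless since the integrand is continuous there) and justifying that the $\E_1$-valued Bochner integral has limit $w$, which follows from the continuity of the embedding $\E_1\hookrightarrow\E$.
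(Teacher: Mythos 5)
Your proof is correct and follows essentially the same route as the paper: both feed the integrand of the Da Prato--Grisvard representation \eqref{int M1+M2} into the interpolation inequality \eqref{ineg norme w}, use the resolvent bounds together with $\M_1(\M_1-zI)^{-1}=I+z(\M_1-zI)^{-1}$ to obtain $O\!\left(|z|^{-(2-\delta)}\right)$ decay, and conclude absolute convergence of the integral in $\E_i$. Your write-up merely makes explicit some points the paper leaves implicit (the identification of the $\E_1$-valued limit with $w$ via the embedding $\E_1\hookrightarrow\E$, the treatment of the bounded part of $\Gamma$, and the $i=2$ case via $(H_3)$), which is welcome but not a different argument.
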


\begin{proof}
It is enough to prove that the integral in \eqref{int M1+M2} converges in $\E_1$. For all $\xi \in \E$, we have
$$\left\|\int_{\Gamma }\left( \mathcal{M}_{1}-zI\right)^{-1}\left( \mathcal{M}_{2}+zI\right) ^{-1} \xi dz\right\|_{\E_1} \leqslant \int_{\Gamma }\left\|\left( \mathcal{M}_{1}-zI\right)^{-1} \left( \mathcal{M}_{2}+zI\right) ^{-1}\xi\right\|_{\E_1} |dz|,$$
then, applying \eqref{ineg norme w}, we obtain
$$\begin{array}{l}
\dis \left\|\left( \mathcal{M}_{1}-zI\right)^{-1} \left( \mathcal{M}_{2}+zI\right) ^{-1}\xi\right\|_{\E_1} \\ \ecart
\leqslant \dis C \left\|\left( \mathcal{M}_{1}-zI\right)^{-1} \left( \mathcal{M}_{2}+zI\right) ^{-1}\xi\right\|_{\E} \\ \ecart
\dis + C \left\|\left( \mathcal{M}_{1}-zI\right)^{-1} \left( \mathcal{M}_{2}+zI\right) ^{-1}\xi\right\|_{\E}^{1-\delta} \left\|\M_1 \left( \mathcal{M}_{1}-zI\right)^{-1} \left( \mathcal{M}_{2}+zI\right) ^{-1}\xi\right\|_{\E}^{\delta}.
\end{array}$$
Now, for all $z \in \Gamma$, we have
$$\begin{array}{l}
\left\|\left( \mathcal{M}_{1}-zI\right)^{-1} \left( \mathcal{M}_{2}+zI\right) ^{-1}\xi\right\|_{\E}^{1-\delta} \left\|\M_1 \left( \mathcal{M}_{1}-zI\right)^{-1} \left( \mathcal{M}_{2}+zI\right) ^{-1}\xi\right\|_{\E}^{\delta} \\ \ecart
\leqslant \dfrac{\left[C_1(\theta_{\M_1}) C_2(\theta_{\M_2})\right]^{1-\delta}}{|z|^{2(1-\delta)}} \dfrac{\left[C_1(\theta_{\M_1})\right]^\delta \left[C_2(\theta_{\M_2})\right]^\delta}{|z|^\delta} \|\xi\|_{\E} = \dfrac{C_1(\theta_{\M_1}) C_2(\theta_{\M_2})}{|z|^{1+(1-\delta)}} \|\xi\|_{\E},
\end{array}$$
from which we deduce the convergence of the integral in \eqref{int M1+M2}. 
\end{proof}

\section{Spectral study of operators}\label{Sect Spectral study of L1 and L2}

In all the sequel, in view to apply the above results, we will consider the following particular Banach space 
$$\mathcal{E}=L^p(0,+\infty;X)\quad \text{with} \quad X = W_{0}^{2,p}(0,\omega) \times L^{p}(0,\omega),$$
equipped with its natural norm.

\subsection{Study of operator $\L_{1,\mu}$}

We study the spectral equation 
\begin{equation*}
\mathcal{L}_{1,\mu} V - \lambda V = R,
\end{equation*} 
where $V\in D(\mathcal{L}_{1,\mu})$, $R \in \mathcal{E}$ and $\lambda \in \CC$ (which will be precised below), that is
\begin{equation}\label{Pb Amine}
\left\{ 
\begin{array}{l}
V''(t)-2\mu V'(t)+(\mu ^{2}-\lambda) V(t) = R(t), \quad t>0 \\ \ecart
V(0)=0,  ~~V(+\infty) = 0.
\end{array}
\right.
\end{equation}
We set 
$$\Pi_{\mu} = \{z \in \CC \setminus \RR_-: \text{Re}(\sqrt{z}) > \mu \}.$$
Now, let us specify $\Pi_{\mu}$. For all $z = x + i y \in \CC \setminus \RR_-$, we have
$$\text{Re}(\sqrt{z}) > \mu  \Longleftrightarrow \sqrt{\frac{|z| + \text{Re}(z)}{2}} > \mu\Longleftrightarrow \sqrt{x^2 + y^2} > 2 \mu^2 - x.$$ 
\begin{itemize}
\item First case : if $x > \mu^2$, we have $\sqrt{x^2 + y^2} + x \geqslant  2x >  2\mu^2,$ then Re$(\sqrt{z}) > \mu$.

\item Second case : if $x \leqslant \mu^2$, then $y^2 + 4 \mu^2 x - 4 \mu^4 > 0$. Thus, we deduce that $\Pi_{\mu}$ is strictly outside the parabola of equation
$$y^2 + 4 \mu^2 x - 4 \mu^4 = 0,$$
turned towards the negative real axis and passing through the points $(\mu^2,0)$, $(0,2\mu^2)$ and $(0,-2\mu^2)$.
\end{itemize}

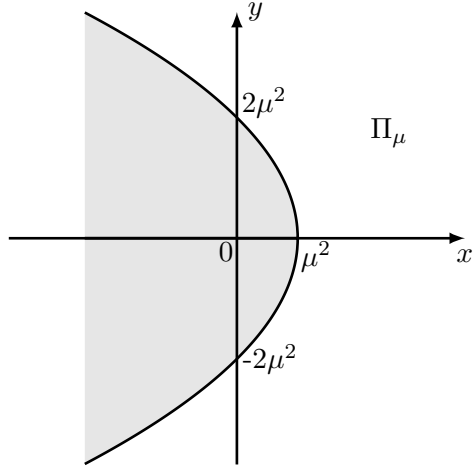
\begin{figure}[ht]
  \begin{center}
  \begin{tikzpicture}[scale=0.2]
	\filldraw [fill=gray!20, line width=1, domain=-10:4, samples=500] plot(\x,{sqrt(64-16*\x)})--(4,0)--(-10,0);
	\filldraw [fill=gray!20, line width=1, domain=-10:4, samples=500] plot(\x,{-sqrt(64-16*\x)})--(4,0)--(-10,0);
		\draw [line width=1,->,>=latex] (-15,0) -- (15,0) node[pos=1,below] {$x$};
	\draw [line width=1,->,>=latex] (0,-15) -- (0,15) node[pos=1,right] {$y$};
	\node at (-0.7,-0.9) {$0$};
	\node at (5.2,-1.2) {$\mu^2$};
	\node at (1.8,9) {$2\mu^2$};
	\node at (2.2,-8) {-$2\mu^2$};
	\node at (10,7) {$\Pi_\mu$};
  \end{tikzpicture}
  \end{center}
\caption{In this figure, $\Pi_\mu$ is the entire uncolored area.}
\end{figure}

Now, let $\varepsilon_{\L_{1,\mu}}$ be a small fixed positive number and consider the following set
\begin{equation}\label{lambda estimation resolvante}
\Sigma_{\L_{1,\mu}} := \left\{\lambda \in \Pi_\mu, \quad |\arg(\lambda)| \leqslant \pi - 2\varepsilon_{\L_{1,\mu}} \quad \text{and} \quad |\lambda| \geqslant \frac{4\mu^2}{\sin^2\left(\varepsilon_{\L_{1,\mu}}\right)}\right\}.
\end{equation}
We then obtain the following proposition.
\begin{Prop}\label{Prop L1}
The linear operator $\mathcal{L}_{1,\mu}$ is closed and densely defined in $\E$. Moreover, there exists a constant $M_{\L_{1,\mu}} > 0$ such that for all $\lambda \in \Sigma_{\L_{1,\mu}}$, operator $\L_{1,\mu} - \lambda I$ is invertible and 
\begin{equation*}
\left\Vert \left(\L_{1,\mu} - \lambda I\right)^{-1}\right\Vert_{\L(\mathcal{E})} \leqslant \frac{M_{\L_{1,\mu}}}{ |\lambda|}.
\end{equation*} 
Therefore, assumption $(H_1)$ in \refS{sect M1 M2} is verified for $\L_{1,\mu}$ with
\begin{equation}\label{theta L1}
\theta_{\L_{1,\mu}} = 2 \varepsilon_{\L_{1,\mu}}.
\end{equation}
\end{Prop}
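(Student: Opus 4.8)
The plan is to reduce the resolvent problem to the scalar boundary value problem \eqref{Pb Amine} and to build $\left(\L_{1,\mu}-\lambda I\right)^{-1}$ as an explicit integral operator whose kernel is a Green's function acting as a scalar kernel times the identity of $X$, and then to bound its operator norm on $\E=L^p(0,+\infty;X)$ by $M_{\L_{1,\mu}}/|\lambda|$ via Young's and Hölder's inequalities. First I would record that the characteristic roots of the ODE in \eqref{Pb Amine} are $s_\pm=\mu\pm\sqrt\lambda$, and observe that the defining condition $\lambda\in\Pi_\mu$, that is $\text{Re}(\sqrt\lambda)>\mu$, is exactly what forces $\text{Re}(s_-)<0$, so that $e^{s_-t}$ is a decaying mode; on $\Sigma_{\L_{1,\mu}}$ one moreover checks $\text{Re}(s_+)>0$, so $s_+$ is the growing mode. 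This dichotomy singles out a unique solution of \eqref{Pb Amine} satisfying simultaneously $V(0)=0$ and $V(+\infty)=0$.

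Next I would write this solution as $V(t)=\int_0^{+\infty}K(t,\sigma)R(\sigma)\,d\sigma$ with the scalar kernel $K(t,\sigma)=-\frac{1}{2\sqrt\lambda}\left(e^{\mu(t-\sigma)-\sqrt\lambda\,|t-\sigma|}-e^{(\mu-\sqrt\lambda)t-(\mu+\sqrt\lambda)\sigma}\right)$, obtained by variation of parameters (heuristically, the substitution $V=e^{\mu t}W$ turns the equation into $W''-\lambda W=e^{-\mu t}R$). I would then verify that the $V$ so defined indeed lies in $W^{2,p}(0,+\infty;X)$, satisfies the two boundary conditions, and solves $\L_{1,\mu}V-\lambda V=R$, which simultaneously shows that the integral operator is a genuine two-sided inverse of $\L_{1,\mu}-\lambda I$. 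For the norm estimate I split $K$ into a convolution part $-\frac{1}{2\sqrt\lambda}e^{\mu(t-\sigma)-\sqrt\lambda|t-\sigma|}$ and a separable (rank-one in $t,\sigma$) boundary part $\frac{1}{2\sqrt\lambda}e^{(\mu-\sqrt\lambda)t}e^{-(\mu+\sqrt\lambda)\sigma}$. For the convolution part, Young's inequality on the half-line bounds the $L^p\to L^p$ norm by the $L^1(\RR)$ norm of the convolution kernel, equal to $\frac{1}{2|\lambda|^{1/2}}\left(\frac{1}{\text{Re}(\sqrt\lambda)-\mu}+\frac{1}{\text{Re}(\sqrt\lambda)+\mu}\right)$; for the separable part, Hölder's inequality bounds the norm by $\frac{1}{2|\lambda|^{1/2}}$ times the product of the $L^p$ norm of $e^{(\mu-\sqrt\lambda)t}$ and the $L^{p'}$ norm of $e^{-(\mu+\sqrt\lambda)\sigma}$.

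The crux, and the reason the set $\Sigma_{\L_{1,\mu}}$ carries both an angular cut and a lower bound on $|\lambda|$, is the quantitative lower bound on $\text{Re}(\sqrt\lambda)\pm\mu$: the condition $|\arg(\lambda)|\leqslant\pi-2\varepsilon_{\L_{1,\mu}}$ gives $\text{Re}(\sqrt\lambda)\geqslant|\lambda|^{1/2}\sin(\varepsilon_{\L_{1,\mu}})$, while $|\lambda|\geqslant 4\mu^2/\sin^2(\varepsilon_{\L_{1,\mu}})$ gives $\text{Re}(\sqrt\lambda)\geqslant 2|\mu|$, whence $\text{Re}(\sqrt\lambda)\pm\mu\geqslant\tfrac12\text{Re}(\sqrt\lambda)\geqslant\tfrac12|\lambda|^{1/2}\sin(\varepsilon_{\L_{1,\mu}})$. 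Feeding these into both estimates and using $1/p+1/p'=1$, every term collapses to a constant multiple of $|\lambda|^{-1}$, which yields the claimed resolvent bound. This sharp uniform $1/|\lambda|$ decay is the main obstacle; once the lower bounds on $\text{Re}(\sqrt\lambda)\pm\mu$ are in hand, the $L^p$ estimates are routine applications of Young and Hölder in the Bochner setting, since the kernel acts as the identity on $X$.

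Finally, density of $D(\L_{1,\mu})$ follows because it contains $C_c^\infty((0,+\infty);X)$ (whose elements vanish near $0$ and near $+\infty$), which is dense in $\E$; closedness follows from the existence of the bounded two-sided inverse just constructed for any $\lambda\in\Sigma_{\L_{1,\mu}}$. For $(H_1)$ it then suffices to remark that the full sector $\left\{z\in\CC\setminus\{0\}:|z|\geqslant R,\ |\arg(z)|<\pi-2\varepsilon_{\L_{1,\mu}}\right\}$ with $R=4\mu^2/\sin^2(\varepsilon_{\L_{1,\mu}})$ is contained in $\Sigma_{\L_{1,\mu}}$, so the resolvent estimate transfers and $(H_1)$ holds with $\theta_{\L_{1,\mu}}=2\varepsilon_{\L_{1,\mu}}$ as in \eqref{theta L1}.
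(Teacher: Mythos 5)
Your proposal is correct and follows essentially the same route as the paper: the kernel you construct is exactly the representation formula \eqref{Rep V} (which the paper imports from Eltaief--Maingot \cite{amine} rather than deriving by variation of parameters), and the norm estimate proceeds by the same splitting into a convolution part and a rank-one boundary part, bounded via Young/H\"older, followed by the same lower bound $\text{Re}(\sqrt{\lambda})\pm\mu\geqslant \tfrac12\sqrt{|\lambda|}\sin(\varepsilon_{\L_{1,\mu}})$ on $\Sigma_{\L_{1,\mu}}$. The only additions are that you verify density and the two-sided inverse property explicitly, which the paper leaves implicit or delegates to \cite{amine}.
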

\begin{proof}
Let $\lambda \in \Pi_{\mu}$. From Eltaief and Maingot \cite{amine}, Theorem 2, p. 712, there exists a unique solution \mbox{$V \in W^{2,p}(0,+\infty;X)$} of problem \eqref{Pb Amine}, given by 
\begin{equation}\label{Rep V}
\begin{array}{rcl}
V(t) & = & \dis \frac{e^{t (\mu-\sqrt{\lambda})}}{2\sqrt{\lambda}} \int_0^{+\infty} e^{-s (\mu+\sqrt{\lambda})} R(s)~ds \\ \ecart
&& \dis - \frac{1}{2\sqrt{\lambda}} \left(\int_0^t e^{(t-s)(\mu-\sqrt{\lambda})} R(s)~ds + \int_t^{+\infty} e^{-(s-t)(\mu+\sqrt{\lambda})} R(s)~ds\right),
\end{array}
\end{equation}
see formula (15) in Eltaief and Maingot \cite{amine} where $L_1:= - \mu I - \sqrt{\lambda} I$ and $L_2:= \mu I - \sqrt{\lambda} I$. It follows that $\Pi_{\mu} \subset \rho(\L_{1,\mu})$. This proves that $\L_{1,\mu}$ is closed. The boundary conditions are verified by using Lemma~8, p. 718 in Eltaief and Maingot \cite{amine}. 

Moreover, from \eqref{Rep V}, we obtain
$$\begin{array}{rcl}
\|V\|_{\mathcal{E}} &\hspace{-0.025cm} \leqslant &\hspace{-0.025cm} \dis \frac{1}{2\sqrt{|\lambda|}}\left(\int_0^{+\infty} e^{-t p(\text{Re}(\sqrt{\lambda})-\mu)} ~dt\right)^{1/p} \int_0^{+\infty} e^{-s (\mu+\text{Re}(\sqrt{\lambda}))} \|R(s)\|_{X}~ds \\ \ecart
&\hspace{-0.025cm}&\hspace{-0.025cm} \dis +  \sup_{t \in \RR_+} \left(\int_0^t \left|e^{(t-s)(\mu-\sqrt{\lambda})}\right|ds + \int_t^{+\infty} \left|e^{-(s-t)(\mu+\sqrt{\lambda})}\right| ds\right)\frac{\|R\|_{\mathcal{E}}}{2\sqrt{|\lambda|}},
\end{array}$$
hence, noting $q$ the conjugate exponent of $p$, we have
$$\begin{array}{rcl}
\|V\|_{\mathcal{E}} & \leqslant & \dis \left(\frac{1}{p (\text{Re}(\sqrt{\lambda})-\mu)} \right)^{1/p} \left(\int_0^{+\infty} e^{-s q(\mu+\text{Re}(\sqrt{\lambda}))}~ds\right)^{1/q} \frac{\|R\|_{\mathcal{E}}}{2\sqrt{|\lambda|}} \\ \ecart
&& \dis + \sup_{t \in \RR_+} \left(\frac{1 - e^{-t(\text{Re}(\sqrt{\lambda})-\mu)}}{\text{Re}(\sqrt{\lambda})-\mu} + \frac{1}{\text{Re}(\sqrt{\lambda})+\mu}\right)\frac{\|R\|_{\mathcal{E}}}{2\sqrt{|\lambda|}} \\ \\

& \leqslant & \dis \frac{1}{p^{1/p} (\text{Re}(\sqrt{\lambda})-\mu)^{1/p}} \frac{1}{q^{1/q} (\text{Re}(\sqrt{\lambda})+\mu)^{1/q}} \, \frac{\|R\|_{\mathcal{E}}}{2\sqrt{|\lambda|}} \\ \ecart
&& \dis + \frac{2}{\text{Re}(\sqrt{\lambda})-\mu}\,\frac{\|R\|_{\mathcal{E}}}{2\sqrt{|\lambda|}} \\ \\

& \leqslant & \dis \frac{2}{\text{Re}(\sqrt{\lambda})-\mu}\,\frac{\|R\|_{\mathcal{E}}}{\sqrt{|\lambda|}}.
\end{array}$$
Let $\lambda = |\lambda|e^{i\arg(\lambda)} \in \Sigma_{\L_{1,\mu}}$, then $|\arg(\lambda)| \leqslant \pi - 2\varepsilon_{\L_{1,\mu}}$. Thus
$$\begin{array}{rcl}
\dis \text{Re}(\sqrt{\lambda})-\mu & \geqslant & \dis \sqrt{|\lambda|} \cos\left(\frac{\arg(\lambda)}{2}\right) - \mu \geqslant \dis \sqrt{|\lambda|} \cos\left(\frac{\pi}{2} - \varepsilon_{\L_{1,\mu}}\right) - \mu \\ \ecart

& \geqslant & \dis \sqrt{|\lambda|} \sin\left(\varepsilon_{\L_{1,\mu}}\right) - \frac{\sqrt{|\lambda|}}{2} \sin\left(\varepsilon_{\L_{1,\mu}}\right)  \geqslant  \dis \frac{\sqrt{|\lambda|}}{2} \sin\left(\varepsilon_{\L_{1,\mu}}\right).
\end{array}$$
We then obtain
\begin{equation}\label{estim V}
\|V\|_{\mathcal{E}} \leqslant \frac{M_{\L_{1,\mu}}}{|\lambda|}\, \|R\|_{\mathcal{E}},
\end{equation}
where $M_{\L_{1,\mu}} = \frac{4}{\sin\left(\varepsilon_{\L_{1,\mu}}\right)}$.
\end{proof}

\subsection{Study of operator $\L_2$}

The spectral properties of $\L_2$ are the same as those of its realization $-\A$. 

In all this subsection, we assume that 
$$\lambda \leqslant 0.$$ 
We have to solve the following spectral equation in $X$
\begin{equation}\label{eq spectrale}
\A \Psi - \lambda \Psi = F,
\end{equation}
that is
$$
\left( 
\begin{array}{cc}
0 & 1 \\ 
-\left( \dfrac{\partial ^{2}}{\partial \theta ^{2}}+1\right) ^{2} & -2\left( 
\dfrac{\partial ^{2}}{\partial \theta ^{2}}-1\right)%
\end{array}\right) \left( 
\begin{array}{c}
\psi _{1} \\ 
\psi _{2}%
\end{array}%
\right) - \lambda \left( 
\begin{array}{c}
\psi _{1} \\ 
\psi _{2}%
\end{array}%
\right) = \dbinom{F_{1}}{F_{2}},
$$
with 
$$F_{1}\in W_{0}^{2,p}(0,\omega) \quad \text{and} \quad F_{2} \in L^{p}(0,\omega).$$ 

We have to find the unique couple $(\psi_1,\psi_2) \in \left(W^{4,p}(0,\omega)\cap W_{0}^{2,p}(0,\omega)\right) \times W_{0}^{2,p}(0,\omega)$, which satisfies the following system
\begin{equation*}
\left\{ 
\begin{array}{lll}
\psi _{2} - \lambda \psi _{1}&=&F_{1} \\ 
- \psi_{1}^{(4)} - 2 \psi''_1 - \psi_1 - 2 \psi''_{2} + 2 \psi_2 - \lambda \psi_{2} &=& F_{2}.
\end{array}
\right.
\end{equation*}
Thus, we first have to solve
\begin{equation*}
\left\{ 
\begin{array}{l}
- \psi_{1}^{(4)} - 2 \psi_1'' - \psi_1 - 2 (\lambda\psi_{1}+F_1)'' + (2 - \lambda)(\lambda \psi_{1} + F_1) = F_{2} \\ \ecart 
\psi _{1}\in W^{4,p}(0,\omega)\cap W_{0}^{2,p}(0,\omega),
\end{array}
\right.
\end{equation*}
that is 
\begin{equation*}
\left\{ 
\begin{array}{l}
- \psi^{(4)}_{1} - 2 (1+\lambda) \psi''_{1} - (\lambda - 1)^2 \psi _{1} = F_{2} + 2 F''_{1} + (\lambda-2) F_{1} \\ \ecart
\psi_1 (0) = \psi_1 (\omega) = \psi'_1 (0) = \psi'_1 (\omega) = 0.
\end{array}
\right.
\end{equation*}
Set $G_\lambda = - F_{2} - 2 (F''_{1}-F_1) - \lambda F_1$, it follows that the previous system writes
\begin{equation}\label{Pb psi}
\left\{ 
\begin{array}{l}
\psi _{1}^{(4)} + 2(\lambda + 1) \psi''_{1} + (\lambda - 1)^{2}\psi_{1} = G_\lambda \\ \ecart
\psi_{1}(0) = \psi_{1}(\omega) = \psi'_{1}(0) = \psi'_{1}(\omega) = 0.
\end{array}
\right.
\end{equation}
Then, the characteristic equation 
\begin{equation*}
\chi^{4} + 2\left(1+\lambda \right) \chi^{2}+\left( \lambda -1\right)^{2}=0,
\end{equation*}
admits, for $\lambda < 0$, the following four distinct solutions
\begin{equation}\label{alpha i}
\left\{ 
\begin{array}{c}
\dis \alpha _{1} = \sqrt{-\lambda} + i, \quad \alpha_3 = - \alpha_1 \\ \ecart
\dis \alpha _{2} = \sqrt{-\lambda} - i, \quad \alpha_4 = - \alpha_2,
\end{array}\right.
\end{equation}
and for $\lambda = 0$, two double solutions $i$ and $-i$.  

We have to distinguish below two cases : $\lambda=0$ and $\lambda<0$.

\subsubsection{Case $\lambda = 0$ : Invertibility of $\A$}
 
\begin{Prop}\label{Prop A inversible} $\A$ is boundedly invertible. Then : $\exists \, \varepsilon_0 > 0$ : $\overline{B(0,\varepsilon_0)} \subset \rho(\A)$.
\end{Prop}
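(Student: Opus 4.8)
The plan is to reduce the bounded invertibility of $\A$ to the unique solvability of a single scalar fourth order boundary value problem, and then to read off the decisive nondegeneracy condition from an explicit determinant.

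First I would write the equation $\A\Psi = F$, with $\Psi = (\psi_1,\psi_2)$ and $F = (F_1,F_2)$, componentwise. Since we are in the case $\lambda = 0$, the first line gives immediately $\psi_2 = F_1$, and the second line is exactly problem \eqref{Pb psi} with $\lambda = 0$, namely
$$\psi_1^{(4)} + 2\psi_1'' + \psi_1 = G_0, \qquad \psi_1(0)=\psi_1(\omega)=\psi_1'(0)=\psi_1'(\omega)=0,$$
with $G_0 = -F_2 - 2(F_1'' - F_1) \in L^p(0,\omega)$, which is well defined because $F_1 \in W_0^{2,p}(0,\omega)$ forces $F_1'' \in L^p(0,\omega)$. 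Thus $\A$ is boundedly invertible on $X$ if and only if this boundary value problem admits, for every $G_0 \in L^p(0,\omega)$, a unique solution $\psi_1 \in W^{4,p}(0,\omega)\cap W_0^{2,p}(0,\omega)$ depending continuously on the data; note that the second component $\psi_2 = F_1 \in W_0^{2,p}(0,\omega)$ already has the regularity required by $D(\A)$.

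Next, specializing \eqref{alpha i} to $\lambda = 0$, the characteristic equation $\chi^4 + 2\chi^2 + 1 = (\chi^2+1)^2 = 0$ has the double roots $\pm i$, so a fundamental system for the homogeneous ODE is $\cos\theta,\ \theta\cos\theta,\ \sin\theta,\ \theta\sin\theta$. By the standard theory of linear ODEs the non-homogeneous boundary value problem is uniquely solvable for every $G_0$ if and only if the homogeneous problem ($G_0 = 0$) has only the trivial solution. Inserting the general homogeneous solution $\psi_1(\theta) = (a+b\theta)\cos\theta + (c+d\theta)\sin\theta$ into the boundary conditions, the two conditions at $\theta = 0$ give $a = 0$ and $b + c = 0$, while the two conditions at $\theta = \omega$ reduce to a $2\times 2$ linear system in $(b,d)$ whose determinant I expect to equal
$$\omega^2 - \sin^2\omega = (\omega - \sin\omega)(\omega + \sin\omega).$$

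The main point — and the only genuine obstacle — is that this determinant must be strictly positive on the full admissible range $\omega \in (0,2\pi]$. This follows since both maps $\omega \mapsto \omega \pm \sin\omega$ vanish at $0$ and are strictly increasing on $(0,2\pi]$ (their derivatives $1 \mp \cos\omega \geqslant 0$ vanish only at isolated points), so that $\omega - \sin\omega > 0$ and $\omega + \sin\omega > 0$ for every $\omega > 0$. Hence the determinant never vanishes, which forces $b = d = 0$, so $a=b=c=d=0$ and $\psi_1 \equiv 0$, giving injectivity. Consequently the boundary value problem is uniquely solvable, and continuity of the inverse can be made explicit by representing $\psi_1 = \int_0^\omega \mathcal{G}(\theta,s)\,G_0(s)\,ds$ through the Green kernel $\mathcal{G}$ built from the above fundamental system, which yields a bounded operator $L^p(0,\omega) \to W^{4,p}(0,\omega)$ (alternatively, one invokes the closed graph theorem, $\A$ being a closed differential operator with these boundary conditions). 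In either case $\A : D(\A) \to X$ is a bijection with $\A^{-1} \in \L(X)$, i.e. $\A$ is boundedly invertible and $0 \in \rho(\A)$. Finally, since the resolvent set is open, there exists $\varepsilon_0 > 0$ with $\overline{B(0,\varepsilon_0)} \subset \rho(\A)$, which is the second assertion.
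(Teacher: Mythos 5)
Your proof is correct, and it takes a genuinely different route from the paper's. Both arguments begin identically, reducing $\A\Psi=F$ at $\lambda=0$ to $\psi_2=F_1$ together with the clamped fourth order problem \eqref{Pb psi1 A inversible}; but at that point the paper simply cites Thorel \cite{thorel}, Theorem 2.8, statement 2., which supplies existence, uniqueness and the a priori bound $\|\psi_1''\|_{L^p(0,\omega)}\leqslant C_1\bigl(\|F_2\|_{L^p(0,\omega)}+2\|F_1\|_{W^{2,p}(0,\omega)}\bigr)$, and concludes with the Poincar\'e inequality, whereas you solve the boundary value problem from scratch: Fredholm alternative for two-point problems, the fundamental system $\cos\theta,\ \theta\cos\theta,\ \sin\theta,\ \theta\sin\theta$ attached to the double roots $\pm i$, and the $2\times 2$ determinant, which does equal $\omega^2-\sin^2\omega=(\omega-\sin\omega)(\omega+\sin\omega)>0$ on $(0,2\pi]$, as a direct computation confirms. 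The citation buys brevity and an immediate quantitative estimate; your computation buys a self-contained elementary proof that exposes the exact nondegeneracy condition, and that condition is no accident: writing $U_1=1-e^{-2\omega\sqrt{-\lambda}}-2\sqrt{-\lambda}\,e^{-\omega\sqrt{-\lambda}}\sin\omega$ and $U_2=1-e^{-2\omega\sqrt{-\lambda}}+2\sqrt{-\lambda}\,e^{-\omega\sqrt{-\lambda}}\sin\omega$ as in the proof of \refP{Prop sol psi 1}, a Taylor expansion gives $U_1U_2=4(-\lambda)\bigl[(\omega-\sin\omega)(\omega+\sin\omega)+o(1)\bigr]$ as $\lambda\to 0^-$, so your determinant is precisely the nonvanishing limit of $U_1U_2/(4(-\lambda))$, the quantity whose positivity for $\lambda<0$ the paper exploits in \eqref{U > 0}--\eqref{V > 0}. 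In particular your calculation makes transparent that the vanishing of $U_1U_2$ at $\lambda=0$ (equivalently, $z=0$ solving $\sinh z=\pm z$) is a spurious degeneration of the exponential basis into the double-root basis, not an eigenvalue of $\A$, consistent with \refR{Rem valeurs propres L2}, which retains only the roots $z_j$ with positive real part.

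Two minor points. First, the derivative of $\omega\pm\sin\omega$ is $1\pm\cos\omega$, not $1\mp\cos\omega$; the slip is harmless, since both factors are nonnegative and vanish only at isolated points. Second, of your two options for the boundedness of the inverse, keep the Green kernel representation rather than the closed graph theorem: the latter presupposes that $\A$ is closed, a fact the paper derives as a \emph{consequence} of this very proposition (see the beginning of the proof of \refP{Prop estim resolvante A}), so invoking it here would require an independent, if routine, verification of closedness; the kernel estimate $\|\psi_1\|_{W^{4,p}(0,\omega)}\leqslant C\|G_0\|_{L^p(0,\omega)}\leqslant C'\|F\|_X$ is self-contained, directly yields $\A^{-1}\in\L(X)$, and then the openness of the resolvent set gives the ball $\overline{B(0,\varepsilon_0)}\subset\rho(\A)$, exactly as you say.
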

\begin{proof}
Here $\lambda = 0$. We have to solve \eqref{eq spectrale}. This is equivalent to solve \eqref{Pb psi} that is
\begin{equation}\label{Pb psi1 A inversible}
\left\{ \begin{array}{l}
\psi _{1}^{(4)}+2\psi''_{1}+\psi _{1} = -F_2 -2(F_1'' - F_1) \\ \ecart
\psi _{1}(0)=\psi _{1}(\omega )=\psi'_{1}(0)=\psi'_{1}(\omega )=0.
\end{array}
\right.
\end{equation}
From Thorel \cite{thorel}, Theorem 2.8, statement 2., there exists a unique classical solution of problem \eqref{Pb psi1 A inversible} that is $(\psi_1,\psi_2) = (\psi_1, F_1) \in \left(W^{4,p}(0,\omega)\cap W_{0}^{2,p}(0,\omega)\right) \times W_{0}^{2,p}(0,\omega)$. We then deduce that there exists $C_1>0$ such that
$$\|\psi_1''\|_{L^p(0,\omega)} \leqslant C_1 \left(\|F_2\|_{L^p(0,\omega)} + 2 \|F_1\|_{W^{2,p}(0,\omega)}\right) \leqslant 2C_1 \|F\|_X,$$
and from the Poincar\'e inequality, there exists $C_\omega > 0$ such that
$$\|\psi_1\|_{W_0^{2,p}(0,\omega)} \leqslant C_\omega \|\psi_1''\|_{L^p(0,\omega)} \leqslant 2 C_1 C_\omega  \|F\|_X.$$
Finally, since $\psi_2 = F_1$, we have
$$\|\psi_2\|_{L^p(0,\omega)} = \|F_1\|_{L^p(0,\omega)} \leqslant \|F_2\|_{L^p(0,\omega)} + \|F_1\|_{W^{2,p}(0,\omega)} = \|F\|_X.$$
\end{proof}

\subsubsection{Case $\lambda < 0$ : Spectral study of $\A$}

In order to prove \refP{Prop estim resolvante A}, we first have to state the following technical results. 
\begin{Lem}\label{Lem I J}
Let $\alpha \in \CC\setminus\{0\}$, $a, b \in \RR$ with $a<b$ and $f \in W^{2,p}_0(a,b)$. For all $x \in [a,b]$, we set 
$$K(x) = \int_a^x e^{-(x-s)\alpha} f(s)~ds + \int_x^b e^{-(s-x)\alpha} f(s)~ds.$$
Then, we have
$$K(x) = \frac{2}{\alpha}\,f(x) + \frac{1}{\alpha^2} \int_a^x e^{-(x-s)\alpha} f''(s)~ds + \frac{1}{\alpha^2}\int_x^b e^{-(s-x)\alpha} f''(s)~ds.$$
\end{Lem}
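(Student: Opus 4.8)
The plan is to prove the identity by a direct double integration by parts, treating separately the two integrals in the definition of $K$. Write $K(x) = K_1(x) + K_2(x)$ with
$$K_1(x) = \int_a^x e^{-(x-s)\alpha} f(s)\,ds, \qquad K_2(x) = \int_x^b e^{-(s-x)\alpha} f(s)\,ds.$$
The structural fact I would exploit is that $f \in W^{2,p}_0(a,b)$, so $f$ and $f'$ are absolutely continuous on $[a,b]$ with $f(a)=f(b)=f'(a)=f'(b)=0$; this both legitimizes the integration by parts and annihilates every boundary contribution at the endpoints $a$ and $b$.

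For $K_1$ I would integrate by parts twice, using the primitive $\tfrac{1}{\alpha}e^{-(x-s)\alpha}$ of $s \mapsto e^{-(x-s)\alpha}$. The first step yields the boundary term $\tfrac{1}{\alpha}f(x)$ (the term at $s=a$ vanishing since $f(a)=0$) plus $-\tfrac{1}{\alpha}\int_a^x e^{-(x-s)\alpha} f'(s)\,ds$; a second integration by parts on this remaining integral yields the boundary term $-\tfrac{1}{\alpha^2}f'(x)$ (the term at $s=a$ vanishing since $f'(a)=0$) plus $\tfrac{1}{\alpha^2}\int_a^x e^{-(x-s)\alpha} f''(s)\,ds$. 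Hence
$$K_1(x) = \frac{1}{\alpha}f(x) - \frac{1}{\alpha^2}f'(x) + \frac{1}{\alpha^2}\int_a^x e^{-(x-s)\alpha} f''(s)\,ds.$$

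The computation for $K_2$ is entirely analogous, but now with the primitive $-\tfrac{1}{\alpha}e^{-(s-x)\alpha}$ of $s \mapsto e^{-(s-x)\alpha}$; the boundary contributions at $s=b$ vanish because $f(b)=f'(b)=0$, while those at $s=x$ carry the opposite sign, producing
$$K_2(x) = \frac{1}{\alpha}f(x) + \frac{1}{\alpha^2}f'(x) + \frac{1}{\alpha^2}\int_x^b e^{-(s-x)\alpha} f''(s)\,ds.$$
Adding $K_1$ and $K_2$, the first-order terms $\mp\tfrac{1}{\alpha^2}f'(x)$ cancel and the stated formula follows at once.

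There is no genuine obstacle: the result is a routine double integration by parts. The only points demanding a little care are the correct signs of the two primitives, which differ between the two integrals because of the $(x-s)$ versus $(s-x)$ in the exponent, and the verification that all four endpoint terms at $a$ and $b$ vanish thanks to $f \in W^{2,p}_0(a,b)$. The structurally meaningful feature, and the reason the final expression is so clean, is precisely the cancellation of the first-order terms $\pm\tfrac{1}{\alpha^2}f'(x)$, which leaves only $f(x)$ together with the two integrals of $f''$.
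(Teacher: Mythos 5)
Your proof is correct and is exactly the paper's own argument: the paper disposes of this lemma with the single sentence ``The result is easily obtained by two integrations by parts,'' and your proposal simply carries out those two integrations by parts in detail, correctly using $f(a)=f(b)=f'(a)=f'(b)=0$ from $f\in W^{2,p}_0(a,b)$ to kill the endpoint terms and observing the cancellation of the $\pm\frac{1}{\alpha^2}f'(x)$ contributions.
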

\begin{proof}
The result is easily obtained by two integrations by parts. 
\end{proof}
Now, let us solve explicitly problem \eqref{Pb psi} for $\lambda<0$.
\begin{Prop}\label{Prop sol psi 1}
Problem \eqref{Pb psi} has a unique solution which can be written in the following form
\begin{equation}\label{psi 1}
\begin{array}{lll}
\psi_1 (\theta) & := & \dis e^{-\theta \alpha_2} (\beta_1 + \beta_2 + \beta_3 + \beta_4) + e^{-(\omega-\theta)\alpha_2} (\beta_3 + \beta_4 - \beta_1 - \beta_2) + S(\theta) \\ \ecart
&& \dis + \left(e^{-\theta \alpha_1} - e^{-\theta \alpha_2}\right) (\beta_2 + \beta_4) + \left(e^{-(\omega-\theta) \alpha_1} - e^{-(\omega-\theta) \alpha_2}\right) (\beta_4 - \beta_2),
\end{array}
\end{equation}
where the constants $\beta_i$, $i=1,2,3,4$, and the particular solution $S$ will be explicitly given below.
\end{Prop}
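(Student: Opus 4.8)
The plan is to reduce \eqref{Pb psi} to a standard superposition problem for a fourth-order constant-coefficient ODE and then to fix the free constants from the four boundary conditions. Since $\lambda<0$, the characteristic equation has the four pairwise distinct roots $\pm\alpha_1,\pm\alpha_2$ of \eqref{alpha i}, with common real part $\sqrt{-\lambda}>0$, so $\{e^{\alpha_1\theta},e^{-\alpha_1\theta},e^{\alpha_2\theta},e^{-\alpha_2\theta}\}$ is a fundamental system of the homogeneous equation. Because $e^{-(\omega-\theta)\alpha_j}=e^{-\omega\alpha_j}e^{\theta\alpha_j}$ is a nonzero multiple of $e^{\alpha_j\theta}$, the four functions $e^{-\theta\alpha_1},e^{-\theta\alpha_2},e^{-(\omega-\theta)\alpha_1},e^{-(\omega-\theta)\alpha_2}$ form an equivalent fundamental system. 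The combinations displayed in \eqref{psi 1} amount to rewriting the homogeneous part in the symmetric/antisymmetric basis $e^{-\theta\alpha_j}\pm e^{-(\omega-\theta)\alpha_j}$ about the midpoint $\theta=\omega/2$; this is an invertible change of the four free constants, so working with the $\beta_i$ is harmless.

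Next I would construct the particular solution $S$. Using the relations in \eqref{alpha i} one checks that $\alpha_1^2+\alpha_2^2=-2(\lambda+1)$ and $\alpha_1^2\alpha_2^2=(\lambda-1)^2$, hence the differential operator factors as $(\partial^2-\alpha_1^2)(\partial^2-\alpha_2^2)$. Since $\text{Re}(\alpha_j)=\sqrt{-\lambda}>0$, the function $-\tfrac{1}{2\alpha_j}e^{-\alpha_j|\theta-s|}$ is an integrable fundamental solution of $\partial^2-\alpha_j^2$, so I would take $S$ to be the iterated convolution obtained by solving $(\partial^2-\alpha_1^2)\Theta=G_\lambda$ and then $(\partial^2-\alpha_2^2)S=\Theta$ against these kernels. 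The resulting integrals are exactly of the type treated in \refL{Lem I J}, which is the tool to put $S$ in closed form (and later to estimate it); this $S$ solves the ODE but need not satisfy the boundary conditions.

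Every solution of \eqref{Pb psi} is then $\psi_1=S+(\text{homogeneous part})$, and imposing $\psi_1(0)=\psi_1(\omega)=\psi_1'(0)=\psi_1'(\omega)=0$ gives a $4\times4$ linear system for $(\beta_1,\beta_2,\beta_3,\beta_4)$. The whole point of the symmetric/antisymmetric basis is that the combined conditions $\psi_1(0)\mp\psi_1(\omega)=0$ and $\psi_1'(0)\pm\psi_1'(\omega)=0$ separate the symmetric from the antisymmetric coefficients, so the coefficient matrix becomes block-diagonal with two $2\times2$ blocks (only the right-hand side, built from the boundary values of $S$, mixes the two). Computing the two $2\times2$ determinants and using $\alpha_1-\alpha_2=2i$, $\alpha_1+\alpha_2=2\sqrt{-\lambda}$ and $e^{-\omega\alpha_1}-e^{-\omega\alpha_2}=-2i\,e^{-\omega\sqrt{-\lambda}}\sin\omega$, each reduces, up to a nonzero factor, to $\sinh(\omega\sqrt{-\lambda})\mp\sqrt{-\lambda}\sin\omega$.

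The main obstacle is the invertibility of this boundary system, i.e.\ showing that neither determinant vanishes. For $\lambda<0$ this is clean: $\sqrt{-\lambda}>0$, and since $\sinh x>x$ for $x>0$ together with $\omega\geqslant|\sin\omega|$, one gets $\sinh(\omega\sqrt{-\lambda})>\omega\sqrt{-\lambda}\geqslant\sqrt{-\lambda}\,|\sin\omega|$, so both $\sinh(\omega\sqrt{-\lambda})\pm\sqrt{-\lambda}\sin\omega$ are strictly positive. Hence both $2\times2$ blocks are invertible, the $\beta_i$ are uniquely determined (and can be written explicitly in terms of the boundary data of $S$ and these determinants), and \eqref{Pb psi} has a unique solution of the announced form \eqref{psi 1}. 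Only real $\lambda<0$ is needed here; the complex zeros of these determinants, which govern the spectrum of $\A$ and the later resolvent analysis through $\tau$, will be treated in the subsequent propositions.
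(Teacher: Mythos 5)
Your proof is correct, but it follows a genuinely different route from the paper. The paper does not re-prove solvability from scratch: it quotes existence and uniqueness of the classical solution of \eqref{Pb psi} from \cite{LMMT}, takes the representation formula from \cite{LLMT} (or \cite{LMT}), and then devotes its ``long calculus'' to transforming that formula --- using the integration-by-parts \refL{Lem I J} and the hypothesis $F_1\in W^{2,p}_0(0,\omega)$ --- into the specific closed forms \eqref{beta i}, \eqref{Sp}, \eqref{J}, \eqref{v}, \eqref{I}. You instead give a self-contained ODE argument: factor the operator as $(\partial^2-\alpha_1^2)(\partial^2-\alpha_2^2)$, build a particular solution by iterated convolution with the kernels $-\tfrac{1}{2\alpha_j}e^{-\alpha_j|\theta-s|}$, and determine the homogeneous coefficients from the boundary conditions, exploiting the symmetric/antisymmetric splitting (which is exactly the change of constants hidden in \eqref{psi 1}, since $\beta_4,\beta_3$ multiply $e^{-\theta\alpha_j}+e^{-(\omega-\theta)\alpha_j}$ and $\beta_2,\beta_1$ multiply $e^{-\theta\alpha_j}-e^{-(\omega-\theta)\alpha_j}$) to reduce the $4\times 4$ system to two $2\times 2$ blocks. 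Your determinants are, up to the nonzero factor $4ie^{-\omega\sqrt{-\lambda}}$, precisely $\sinh(\omega\sqrt{-\lambda})\pm\sqrt{-\lambda}\sin\omega$, i.e.\ the quantities $U_2$, $U_1$ computed inside the paper's proof, and your positivity argument ($\sinh x>x$ and $|\sin\omega|\leqslant\omega$) is the same as the one the paper only records \emph{after} the proof, in \eqref{U > 0}--\eqref{V > 0}. What each approach buys: yours is elementary, independent of the references, and makes the non-degeneracy of the boundary system an explicit step of the proof rather than an imported fact; the paper's, by contrast, produces the exact expressions for $S$ and the $\beta_i$ (with the $F_1''$-corrections extracted via \refL{Lem I J}) that are estimated term by term in the subsequent \refL{Lem estim J, I, v}, \refL{Lem estimation beta i} and \refP{Prop estim resolvante A} --- your ``plain'' convolution $S$ and abstractly determined constants prove the Proposition as stated, but the quantitative resolvent bound downstream would still require recasting them in a comparably explicit form.
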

\begin{proof}
In order to apply results obtained in Labbas, Lemrabet, Maingot and Thorel \cite{LLMT} and Labbas, Maingot, Manceau and Thorel \cite{LMMT}, we set 
$$L_- = -\alpha_1 I,~~M= -\alpha_2 I,~~r_- = \alpha_1^2 - \alpha_2^2,~~a=0 ~~ \text{and}~~b=\omega.$$ 
Then, problem \eqref{Pb psi} can be written as
\begin{equation}\label{Pb démo psi 1}
\left\{ 
\begin{array}{l}
\psi _{1}^{(4)} - (L_-^2 + M^2) \psi''_{1} + L_-^2 M^2 \psi_{1} = G_{\lambda} \\ \ecart
\psi _{1}(0)=\psi _{1}(\omega )=\psi'_{1}(0)=\psi'_{1}(\omega )=0.
\end{array}
\right.
\end{equation}
From Labbas, Maingot, Manceau and Thorel \cite{LMMT}, there exists a unique classical solution $\psi_1$ of problem \eqref{Pb démo psi 1}. Its representation formula, in the form \eqref{psi 1}, is explicitly given in Labbas, Lemrabet, Maingot and Thorel \cite{LLMT} by (14)-(15)-(16) or also in a more general framework in Labbas, Maingot and Thorel \cite{LMT} by (22)-(30)-(31).

For the reader convenience, we will prove, by a long calculus, that 
\begin{equation}\label{beta i}
\left\{\begin{array}{rcl}
\beta_1 & = & \dis \frac{1}{4i} U_1^{-1} \, \frac{1-e^{-\omega \alpha_1}}{1 - e^{-\omega \alpha_2}} \left(J(0) - J(\omega)\right) \\ \ecart

\beta_2 & = & \dis - \frac{1}{4i} U_1^{-1} \, \left(J(0) - J(\omega)\right) \\ \ecart

\beta_3 & = & \dis -\frac{1}{4i} U_2^{-1} \, \frac{1+e^{-\omega \alpha_1}}{1 + e^{-\omega \alpha_2}} \left(J(0) + J(\omega)\right) \\ \ecart

\beta_4 & = & \dis \frac{1}{4i} U_2^{-1} \, \left(J(0) + J(\omega)\right),
\end{array}\right.
\end{equation}
with 
\begin{equation}\label{U-V M}
\left \{\begin{array}{rcl}
U_1 & := & \dis 1 - e^{-2\omega \sqrt{-\lambda}} - 2\omega \sqrt{-\lambda}\, e^{-\omega \sqrt{-\lambda}} \\ \ecart
U_2 & := & \dis 1 - e^{-2\omega \sqrt{-\lambda}} + 2\omega \sqrt{-\lambda}\, e^{-\omega \sqrt{-\lambda}},
\end{array}\right.
\end{equation}
and for all $\theta \in [0,\omega]$ 
\begin{equation}\label{Sp}
\begin{array}{rcl}
S(\theta) &: = & \dis \frac{e^{-\theta \alpha_2}}{2\alpha_2 \,(1 - e^{-2\omega\alpha_2})}  \left(J(0) - e^{- \omega \alpha_2} J(\omega)\right) - \frac{\lambda}{\alpha_1^2\alpha_2^2}\, F_1(\theta) \\ \ecart 
&& \dis + \frac{e^{-(\omega-\theta) \alpha_2}}{2\alpha_2 \,(1 - e^{-2\omega\alpha_2})} \left( J(\omega) - e^{- \omega \alpha_2} J(0)\right) - \frac{1}{2\alpha_2} \,J(\theta),
\end{array}
\end{equation}
with
\begin{equation}\label{J}
J(\theta) := \int_0^\theta e^{-(\theta-s) \alpha_2} v(s) ~ds + \int_\theta^\omega e^{-(s-\theta)\alpha_2} v(s) ~ds,
\end{equation}
where
\begin{equation}\label{v}
\begin{array}{rll}
v(\theta) & := & \dis \frac{e^{-\theta \alpha_1}}{2\alpha_1\left(1 - e^{- 2\omega \alpha_1} \right)} \left(I(0) - e^{-\omega\alpha_1}I(\omega)\right) + \frac{\lambda}{\alpha_1^2\alpha_2^2}\, F_1''(\theta) \\ \ecart
&& \dis + \frac{e^{-(\omega-\theta) \alpha_1}}{2\alpha_1\left(1 - e^{- 2\omega \alpha_1} \right)} \left(I(\omega) - e^{-\omega\alpha_1}I(0)\right)  - \frac{1}{2\alpha_1}\, I(\theta),
\end{array}
\end{equation}
and
\begin{equation}\label{I}
\begin{array}{lll}
I(\theta) & = & \dis \int_0^\theta e^{-(\theta-s)\alpha_1} \left(-F_2-2(F_1''-F_1)+ \frac{\lambda}{\alpha_1^2} \,F_1''\right)(s)~ ds \\ \ecart
&& \dis + \int_\theta^\omega e^{-(s-\theta)\alpha_1} \left(-F_2 - 2(F_1''-F_1) +\frac{\lambda}{\alpha_1^2} \,F_1''\right)(s)~ ds.
\end{array}
\end{equation}
Let us begin our proof. Here, we are inspired by the formulas given by (15) and (16), p. 2943, in Labbas, Lemrabet, Maingot and Thorel \cite{LLMT}, where the authors have used the notations $F_-$, $f_-$, $U_-$ and $V_-$ which are replace respectively here by $S$, $G_\lambda$, $U_1$ and $U_2$. Consequently, we have
\begin{equation}\label{Sp M}
\begin{array}{rll}
S(\theta) &= &\dis \frac{Z e^{-\theta \alpha_2}}{2\alpha_2}  \int_0^\omega e^{-s \alpha_2} v_0(s)~ ds + \frac{Z e^{-(\omega-\theta)\alpha_2}}{2\alpha_2}  \int_0^\omega e^{-(\omega-s)\alpha_2} v_0(s)~ ds \\ \ecart
&& - \dis \frac{1}{2\alpha_2} \int_0^\theta e^{-(\theta-s)\alpha_2} v_0(s)~ ds - \frac{1}{2\alpha_2} \int_\theta^\omega e^{-(s-\theta)\alpha_2} v_0(s)~ ds \\ \ecart
&& - \dis\frac{Z  e^{-\omega\alpha_2}e^{-\theta\alpha_2}}{2\alpha_2}  \int_0^\omega e^{-(\omega-s)\alpha_2} v_0(s)~ ds \\ \ecart
&&- \dis\frac{Z e^{-\omega \alpha_2} e^{-(\omega-\theta)\alpha_2} }{2\alpha_2}  \int_0^\omega e^{-s\alpha_2} v_0(s)~ ds, 
\end{array}
\end{equation}
where
\begin{equation}\label{v0 M}
\begin{array}{rll}
v_0(\theta) &:= &  \dis\frac{W e^{-\theta\alpha_1}}{2\alpha_1}  \int_0^\omega e^{-s\alpha_1} G_\lambda(s)~ ds + \frac{W e^{-(\omega-\theta)\alpha_1}}{2\alpha_1}  \int_0^\omega e^{-(\omega-s)\alpha_1} G_\lambda(s)~ ds\\ \ecart
&& - \dis \frac{W e^{-\omega \alpha_1}e^{-\theta\alpha_1} }{2\alpha_1}  \int_0^\omega e^{-(\omega-s)\alpha_1} G_\lambda(s)~ ds \\ \ecart
&& - \dis\frac{W e^{-\omega\alpha_1} e^{-(\omega-\theta)\alpha_1} }{2\alpha_1}  \int_0^\omega e^{-s \alpha_1} G_\lambda(s)~ ds - \frac{1}{2\alpha_1} I_1(\theta), 
\end{array}
\end{equation}
with $Z := \left(1-e^{-2\omega\alpha_2} \right)^{-1}$, $W := \left(1-e^{-2\omega\alpha_1}\right)^{-1}$ and
\begin{equation}\label{I1}
I_1(\theta) = \int_0^\theta e^{-(\theta-s)\alpha_1} G_\lambda(s)~ ds + \int_\theta^\omega e^{-(s-\theta)\alpha_1} G_\lambda(s)~ ds.
\end{equation}
Then, since $G_\lambda = - F_{2} - 2 (F''_{1}-F_1) - \lambda F_1$, we have 
\begin{equation*}
\begin{array}{rcl}
I_1(\theta) & = & \dis \int_0^\theta e^{-(\theta-s)\alpha_1} \left(-F_2 - 2 (F''_1-F_1)\right)(s) ~ds \\ \ecart
&& \dis + \int_\theta^\omega e^{-(s-\theta)\alpha_1} \left(-F_2 - 2 (F''_1-F_1)\right)(s) ~ ds \\ \ecart
&& \dis - \lambda \int_0^\theta e^{-(\theta-s)\alpha_1} F_1(s)~ ds - \lambda
\int_\theta^\omega e^{-(s-\theta)\alpha_1} F_1(s)~ ds. 
\end{array}
\end{equation*}
From \refL{Lem I J} and the fact that $F_1 \in W^{2,p}_0(0,\omega)$, it follows that
\begin{equation*}
\begin{array}{rcl}
I_1(\theta) & = & \dis \int_0^\theta e^{-(\theta-s)\alpha_1} \left(-F_2 - 2 (F''_1 - F_1)\right)(s)~ ds \\ \ecart
&& \dis + \int_\theta^\omega e^{-(s-\theta)\alpha_1} \left(-F_2 - 2 (F''_1 - F_1)\right)(s)~ ds \\ \ecart
&&\dis - \frac{2\lambda}{\alpha_1} F_1(\theta) + \frac{\lambda}{\alpha_1^2} \left( \int_0^\theta e^{-(\theta-s)\alpha_1} F_1''(s)~ ds + \int_\theta^\omega e^{-(s-\theta)\alpha_1} F_1''(s)~ ds \right) \\ \\

& = & \dis - \frac{2\lambda}{\alpha_1} F_1(\theta) + \int_0^\theta e^{-(\theta-s)\alpha_1} \left(-F_2-2(F_1''-F_1)+ \frac{\lambda}{\alpha_1^2} \,F_1''\right)(s)~ ds \\ \ecart
&& \dis + \int_\theta^\omega e^{-(s-\theta)\alpha_1} \left(-F_2 - 2(F_1''-F_1) +\frac{\lambda}{\alpha_1^2} \,F_1''\right)(s)~ ds.
\end{array}
\end{equation*}
Thus $I$, given by \eqref{I}, satisfies
\begin{equation}\label{I1 I}
I(\theta) =  I_1(\theta) + \frac{2\lambda}{\alpha_1}\, F_1(\theta).
\end{equation}
Note that, from \eqref{I1} and \eqref{I1 I}, we have
$$\int_0^\omega e^{-s \alpha_1} G_\lambda(s)~ ds = I_1(0) = I(0) \quad \text{and} \quad \int_0^\omega e^{-(\omega-s) \alpha_1} G_\lambda(s)~ ds = I_1(\omega) = I(\omega).$$
Therefore, from \eqref{v0 M}, we deduce that for all $\theta \in [0,\omega]$
\begin{equation*}
\begin{array}{rll}
v_0(\theta) & = & \dis \frac{W e^{-\theta \alpha_1}}{2\alpha_1}  \left(I(0) - e^{-\omega\alpha_1}I(\omega)\right)  \\ \ecart
&& \dis + \frac{W e^{-(\omega-\theta) \alpha_1} }{2\alpha_1} \left(I(\omega) - e^{-\omega\alpha_1}I(0)\right) - \frac{1}{2\alpha_1}\, I_1(\theta) \\ \\

& = & \dis \frac{W e^{-\theta \alpha_1}}{2\alpha_1} \left(I(0) - e^{-\omega\alpha_1}I(\omega)\right) + \frac{\lambda}{\alpha_1^2} \,F_1(\theta) \\ \ecart
&& \dis + \frac{W e^{-(\omega-\theta) \alpha_1}}{2\alpha_1} \,  \left(I(\omega) - e^{-\omega\alpha_1}I(0)\right)  - \frac{1}{2\alpha_1}\, I(\theta).
\end{array}
\end{equation*}
Set
\begin{equation}\label{v1}
v_1 = v_0 - \frac{\lambda}{\alpha_1^2} \,F_1,
\end{equation}
and 
\begin{equation}\label{J1}
J_1(\theta) = \int_0^\theta e^{-(\theta-s) \alpha_2} v_0(s)~ ds + \int_\theta^\omega e^{-(s-\theta)\alpha_2 } v_0(s)~ ds;
\end{equation}
then, due to \refL{Lem I J}, we obtain
\begin{equation*}
\begin{array}{rll}
J_1(\theta) & = & \dis \int_0^\theta e^{-(\theta-s) \alpha_2} v_1(s)~ ds + \int_\theta^\omega e^{-(s-\theta)\alpha_2 } v_1(s)~ ds \\ \ecart
&& \dis + \frac{\lambda}{\alpha_1^2} \left(\int_0^\theta e^{-(\theta-s) \alpha_2} F_1(s)~ ds + \int_\theta^\omega e^{-(s-\theta)\alpha_2 } F_1(s)~ ds \right) \\ \\


& = & \dis \int_0^\theta e^{-(\theta-s) \alpha_2} \left(v_1(s) + \frac{\lambda}{\alpha_1^2\alpha_2^2} \,F_1''(s)\right) ds \\ \ecart
&& \dis + \int_\theta^\omega e^{-(s-\theta)\alpha_2 } \left(v_1(s) + \frac{\lambda}{\alpha_1^2\alpha_2^2}\,F_1''(s)\right) ds + \frac{2\lambda}{\alpha_1^2\alpha_2} F_1(\theta).
\end{array}
\end{equation*}
From \eqref{v1}, for all $\theta \in [0,\omega]$, we deduce that $v$ given by \eqref{v} and $J$ given by \eqref{J}, satisfy 
$$v(\theta) = v_1(\theta) + \frac{\lambda}{\alpha_1^2\alpha_2^2} \, F_1''(\theta),$$
and 
\begin{equation}\label{J1 J}
J(\theta) =  J_1(\theta) - \frac{2\lambda}{\alpha_1^2\alpha_2} F_1(\theta).
\end{equation}
Note that, from \eqref{J1} and \eqref{J1 J}, we have
$$\int_0^\omega e^{-s\alpha_2 } v_0(s)~ ds = J_1(0) = J(0) \quad \text{and} \quad \int_0^\omega e^{-(\omega-s)\alpha_2 } v_0(s)~ ds = J_1(\omega) = J(\omega).$$
Finally, from \eqref{J}, \eqref{Sp M}, \eqref{J1 J} and for all $\theta \in [0,\omega]$, we deduce that
\begin{equation*}
\begin{array}{rll}
S(\theta) & = & \dis \frac{Z e^{-\theta \alpha_2}}{2\alpha_2}  \left(J(0) - e^{- \omega \alpha_2} J(\omega)\right) \\ \ecart 
&& \dis + \frac{Z e^{-(\omega-\theta) \alpha_2}}{2\alpha_2} \left( J(\omega) - e^{- \omega \alpha_2} J(0)\right) - \frac{1}{2\alpha_2} \, J_1(\theta),
\end{array}
\end{equation*}
which is \eqref{Sp}. 

Now, in order to compute $\beta_i$, $i=1,...,4$, we must explicit $U_1$, $U_2$ and $S'(0) \pm S'(\omega)$. 
\begin{equation*}
\begin{array}{rll}
U_1 & = & \dis 1 - e^{-\omega (\alpha_1 + \alpha_2)} - (\alpha_1^2 - \alpha_2^2)^{-1} (\alpha_1 + \alpha_2)^2 \left(e^{-\omega \alpha_2} - e^{-\omega \alpha_1}\right) \\ \ecart
& = & \dis 1 - e^{-2\omega \sqrt{-\lambda}} + i\sqrt{-\lambda} \left(e^{-\omega (\sqrt{-\lambda} -i)} - e^{-\omega (\sqrt{-\lambda} + i)}\right) \\ \ecart

& = & \dis 1 - e^{-2\omega \sqrt{-\lambda}} - 2 \sqrt{-\lambda} \,e^{-\omega \sqrt{-\lambda}} \sin(\omega),
\end{array}
\end{equation*}
and
\begin{equation*}
\begin{array}{rll}
U_2 & = & \dis 1 - e^{-\omega (\alpha_1 + \alpha_2)} + (\alpha_1^2 - \alpha_2^2)^{-1} (\alpha_1 + \alpha_2)^2 \left(e^{-\omega \alpha_2} - e^{-\omega \alpha_1}\right) \\ \ecart
& = & \dis 1 - e^{-2\omega \sqrt{-\lambda}} - i \sqrt{-\lambda} \left(e^{-\omega (\sqrt{-\lambda} -i)} - e^{-\omega (\sqrt{-\lambda} + i)}\right) \\ \ecart

& = & \dis 1 - e^{-2\omega \sqrt{-\lambda}} + 2 \sqrt{-\lambda}\, e^{-\omega \sqrt{-\lambda}} \sin(\omega).
\end{array}
\end{equation*}
From \eqref{Sp}, it follows that
\begin{equation*}
\begin{array}{rll}
S'(\theta) & = &  \dis -\frac{Z e^{-\theta \alpha_2}}{2} \,\left(J(0) - e^{- \omega \alpha_2} J(\omega)\right) - \frac{\lambda}{\alpha_1^2\alpha_2^2}\, F_1'(\theta) \\ \ecart 
&& \dis + \frac{Z e^{-(\omega-\theta) \alpha_2}}{2}\, \left( J(\omega) - e^{- \omega \alpha_2} J(0)\right) - \frac{1}{2\alpha_2} \, J'(\theta) \\ \\

& = & \dis -\frac{Z e^{-\theta \alpha_2}}{2} \left(J(0) - e^{- \omega \alpha_2} J(\omega)\right) + \frac{Z e^{-(\omega-\theta) \alpha_2}}{2} \left( J(\omega) - e^{- \omega \alpha_2} J(0)\right) \\ \ecart 
&& \dis + \frac{1}{2} \left(\int_0^\theta e^{-(\theta-s) \alpha_2} v(s) ~ds - \int_\theta^\omega e^{-(s-\theta)\alpha_2 } v(s) ~ds\right) - \frac{\lambda}{\alpha_1^2\alpha_2^2}\, F_1'(\theta),
\end{array}
\end{equation*}
then
$$S'(0) + S'(\omega) = - \frac{J(0) - J(\omega)}{\left(1 - e^{-\omega \alpha_2}\right)} \quad \text{and} \quad S'(0) - S'(\omega) = - \frac{J(0) + J(\omega)}{\left(1 + e^{-\omega \alpha_2}\right)}.$$
from which we deduce the constants $\beta_i$, $i=1,2,3,4$, written in \eqref{beta i}.
\end{proof}

\begin{Rem}
Since $0 < \sin(\omega) < \omega$, for all $\omega > 0$, then we have
\begin{equation*}
U_1 = 1 - e^{-2\omega \sqrt{-\lambda}} - 2 \sqrt{-\lambda} \,e^{-\omega \sqrt{-\lambda}} \sin(\omega) \geqslant 1 - e^{-2\omega \sqrt{-\lambda}} - 2 \omega\sqrt{-\lambda} \, e^{-\omega \sqrt{-\lambda}},
\end{equation*}
and 
\begin{equation*}
U_2 = 1 - e^{-2\omega \sqrt{-\lambda}} + 2 \sqrt{-\lambda}\, e^{-\omega \sqrt{-\lambda}} \sin(\omega) \geqslant 1 - e^{-2\omega \sqrt{-\lambda}} - 2 \omega\sqrt{-\lambda}\, e^{-\omega \sqrt{-\lambda}}.
\end{equation*}
For $x>0$, we consider the following function
$$h(x) = 1 - e^{-2x} - 2 x e^{-x};$$
we then have
$$h'(x) = 2 e^{-2x} - 2 e^{-x} + 2 x e^{-x} = 2 e^{-x} \left(e^{-x} + x - 1\right) > 0.$$
It follows that $h(x) > h(0) = 0$, for all $x>0$. Finally, we deduce that 
\begin{equation}\label{U > 0}
U_1 \geqslant 1 - e^{-2\omega \sqrt{-\lambda}} - 2 \omega\sqrt{-\lambda} e^{-\omega \sqrt{-\lambda}} = h(\omega\sqrt{-\lambda}) > 0,
\end{equation}
and 
\begin{equation}\label{V > 0} 
U_2 \geqslant 1 - e^{-2\omega \sqrt{-\lambda}} - 2 \omega\sqrt{-\lambda} e^{-\omega \sqrt{-\lambda}} = h(\omega\sqrt{-\lambda}) > 0.
\end{equation}
\end{Rem}

\begin{Lem}\label{Lem estim J, I, v}
For all $\lambda \in (-\infty, -\varepsilon_0]$, where $\varepsilon_0$ is defined in \refP{Prop A inversible}, functions $J$, $v$ and $I$, given by \eqref{J}, \eqref{v} and \eqref{I}, satisfy the following estimates
\begin{enumerate}
\item $\dis\|I\|_{L^p(0,\omega)} \leqslant \frac{2}{\sqrt{-\lambda}} \left(\|F_2\|_{L^p(0,\omega)} + 2\|F_1\|_{L^p(0,\omega)} + 3 \|F_1''\|_{L^p(0,\omega)}\right)$.

\item $\dis |I(0)| + |I(\omega)| \leqslant \frac{2}{\sqrt{-\lambda}^{1-1/p}} \, \left(\|F_2\|_{L^p(0,\omega)} + 2\|F_1\|_{L^p(0,\omega)} + 3 \|F_1''\|_{L^p(0,\omega)}\right).$

\item $\dis \|v\|_{L^p(0,\omega)} \leqslant \frac{M_1}{-\lambda} \left(\|F_2\|_{L^p(0,\omega)} + 2 \|F_1\|_{L^p(0,\omega)} + 3 \|F_1''\|_{L^p(0,\omega)}\right)$,

where $M_1 = 2 + \frac{2}{1 - e^{- 2\omega \sqrt{\varepsilon_0}}}$.

\item $\dis \|J \|_{L^p(0,\omega)} \leqslant \frac{2}{\sqrt{-\lambda}}\, \|v\|_{L^p(0,\omega)}$.

\item $\dis |J(0)| + |J(\omega)| \leqslant \frac{2}{\sqrt{-\lambda}^{1-1/p}}\, \|v\|_{L^p(0,\omega)}$.
\end{enumerate}
\end{Lem}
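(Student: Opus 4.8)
The plan is to derive all five bounds as a short cascade of elementary convolution and Hölder estimates, using throughout two structural facts read off from \eqref{alpha i}: for $\lambda<0$ one has $\text{Re}(\alpha_1)=\text{Re}(\alpha_2)=\sqrt{-\lambda}$, and $|\alpha_1|^2=|\alpha_2|^2=-\lambda+1\geqslant-\lambda$ (so $|\alpha_1^2\alpha_2^2|=(-\lambda+1)^2$). The hypothesis $\lambda\leqslant-\varepsilon_0$ will be used only to keep the exponential factor $1-e^{-2\omega\alpha_1}$ bounded away from $0$.

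First I would treat items 1 and 2. The integral $I$ in \eqref{I} is a two-sided convolution of the datum $g:=-F_2-2(F_1''-F_1)+\tfrac{\lambda}{\alpha_1^2}F_1''$ against the kernel $e^{-|\cdot|\alpha_1}$, whose modulus equals $e^{-|\cdot|\sqrt{-\lambda}}$. Since $\bigl|\tfrac{\lambda}{\alpha_1^2}\bigr|=\tfrac{-\lambda}{-\lambda+1}\leqslant 1$, the triangle inequality gives $\|g\|_{L^p(0,\omega)}\leqslant\|F_2\|_{L^p(0,\omega)}+2\|F_1\|_{L^p(0,\omega)}+3\|F_1''\|_{L^p(0,\omega)}$ (the coefficient $3=2+1$ coming from the $F_1''$ terms). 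Young's inequality together with $\|e^{-|\cdot|\sqrt{-\lambda}}\|_{L^1(\RR)}=\tfrac{2}{\sqrt{-\lambda}}$ then yields item 1. For item 2, I would apply Hölder's inequality to $I(0)=\int_0^\omega e^{-s\alpha_1}g(s)\,ds$ and $I(\omega)=\int_0^\omega e^{-(\omega-s)\alpha_1}g(s)\,ds$, bounding $\bigl(\int_0^\omega e^{-qs\sqrt{-\lambda}}ds\bigr)^{1/q}\leqslant\sqrt{-\lambda}^{-1/q}$ with $1/q=1-1/p$.

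Next I would prove item 3 by estimating $v$ in \eqref{v} term by term. For the two boundary terms I bound $\|e^{-\theta\alpha_1}\|_{L^p(0,\omega)}\leqslant\sqrt{-\lambda}^{-1/p}$, use $|\alpha_1|^{-1}\leqslant(-\lambda)^{-1/2}$ and the lower bound $|1-e^{-2\omega\alpha_1}|\geqslant 1-e^{-2\omega\sqrt{-\lambda}}\geqslant 1-e^{-2\omega\sqrt{\varepsilon_0}}>0$, and insert item 2; the powers then combine via $\sqrt{-\lambda}^{-1/p}\cdot\sqrt{-\lambda}^{-1}\cdot\sqrt{-\lambda}^{-1/q}=(-\lambda)^{-1}$ using $1/p+1/q=1$, each boundary term contributing $\tfrac{1}{1-e^{-2\omega\sqrt{\varepsilon_0}}}\tfrac{1}{-\lambda}(\cdots)$. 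The term $\tfrac{\lambda}{\alpha_1^2\alpha_2^2}F_1''$ is handled by $\bigl|\tfrac{\lambda}{\alpha_1^2\alpha_2^2}\bigr|=\tfrac{-\lambda}{(-\lambda+1)^2}\leqslant(-\lambda)^{-1}$, and the last term $-\tfrac{1}{2\alpha_1}I$ by $|\alpha_1|^{-1}\leqslant(-\lambda)^{-1/2}$ combined with item 1, each contributing $\tfrac{1}{-\lambda}(\cdots)$. Summing the four pieces produces exactly $M_1=2+\tfrac{2}{1-e^{-2\omega\sqrt{\varepsilon_0}}}$.

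Finally, items 4 and 5 are immediate once I observe that $J$ in \eqref{J} has the very same convolution structure as $I$, with $\alpha_2$ in place of $\alpha_1$ and $v$ in place of $g$; since $\text{Re}(\alpha_2)=\sqrt{-\lambda}$ as well, the Young estimate of item 1 gives item 4 and the Hölder estimate of item 2 gives item 5, now with $\|v\|_{L^p(0,\omega)}$ on the right. The genuinely routine part here is the bookkeeping of the numerical constants and the powers of $\sqrt{-\lambda}$; the single point that truly needs the standing assumption $\lambda\leqslant-\varepsilon_0$ is the lower bound on $|1-e^{-2\omega\alpha_1}|$ in item 3, which would degenerate as $\lambda\to 0$.
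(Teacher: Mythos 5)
Your proposal is correct and follows essentially the same route as the paper's proof: termwise estimates of \eqref{I}, \eqref{v}, \eqref{J} using $\mathrm{Re}(\alpha_1)=\mathrm{Re}(\alpha_2)=\sqrt{-\lambda}$, an $L^1$-kernel (Young/Schur) bound for items 1 and 4, H\"older at the endpoints for items 2 and 5, and the lower bound $|1-e^{-2\omega\alpha_1}|\geqslant 1-e^{-2\omega\sqrt{\varepsilon_0}}$ for item 3. The only cosmetic difference is that you use $|\alpha_i|\geqslant\sqrt{-\lambda}$ where the paper keeps $|\alpha_i|=\sqrt{1-\lambda}$ and relaxes at the end; both yield the stated constant $M_1$.
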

\begin{proof}\hfill
\begin{enumerate}
\item From \eqref{alpha i} and \eqref{I}, we obtain
\begin{equation*}
\begin{array}{rcl}
\|I\|_{L^p(0,\omega)} & \leqslant & \dis \sup_{\theta \in [0,\omega]} \left(\int_0^\theta e^{-(\theta-s)\sqrt{-\lambda}}~ ds + \int_\theta^\omega e^{-(s-\theta)\sqrt{-\lambda}} ~ds \right) \|2F_1 - F_2\|_{L^p(0,\omega)}   \\ \ecart
&&\dis + \sup_{\theta \in [0,\omega]} \left(\int_0^\theta e^{-(\theta-s)\sqrt{-\lambda}}~ ds + \int_\theta^\omega e^{-(s-\theta)\sqrt{-\lambda}} ~ds \right) \left\|\left(\frac{\lambda}{\alpha_1^2}-2\right) F_1''\right\|_{L^p(0,\omega)} \\ \\

& \leqslant & \dis \sup_{\theta \in [0,\omega]}\left( \frac{1 - e^{-\theta \sqrt{-\lambda}}}{\sqrt{-\lambda}} + \frac{1 - e^{-(\omega-\theta)\sqrt{-\lambda} }}{\sqrt{-\lambda}}\right) \left(\|F_2\|_{L^p(0,\omega)} + 2\|F_1\|_{L^p(0,\omega)}\right) \\ \ecart
&& \dis +\, 3 \sup_{\theta \in [0,\omega]}\left( \frac{1 - e^{-\theta \sqrt{-\lambda}}}{\sqrt{-\lambda}} + \frac{1 - e^{-(\omega-\theta)\sqrt{-\lambda} }}{\sqrt{-\lambda}}\right) \|F_1''\|_{L^p(0,\omega)} \\ \\

& \leqslant & \dis \frac{2}{\sqrt{-\lambda}} \left(\|F_2\|_{L^p(0,\omega)} + 2\|F_1\|_{L^p(0,\omega)} + 3 \|F_1''\|_{L^p(0,\omega)}\right).
\end{array}
\end{equation*}

\item Due to \eqref{alpha i}, \eqref{I} and the Hölder inequality, it follows
\begin{equation*}
\begin{array}{rcl}
|I(0)| + |I(\omega)| & \leqslant & \dis \int_0^\omega e^{-s\sqrt{-\lambda}} \left|-F_2(s) - 2(F_1''(s)-F_1(s))+ \frac{\lambda}{\alpha_1^2} \,F_1''(s)\right| ds \\ \ecart
&& \dis + \int_0^\omega e^{-(\omega-s)\sqrt{-\lambda}} \left|-F_2(s) - 2(F_1''(s)-F_1(s))+ \frac{\lambda}{\alpha_1^2} \,F_1''(s)\right| ds \\ \\

& \leqslant & \dis \left(\int_0^\omega e^{-q(\omega-s)\sqrt{-\lambda}} ~ds \right)^{1/q}  \left(\|F_2\|_{L^p(0,\omega)} + 2\|F_1\|_{L^p(0,\omega)} + 3 \|F_1''\|_{L^p(0,\omega)}\right) \\ \ecart
&& \dis + \left(\int_0^\omega e^{-sq\sqrt{-\lambda}}~ds\right)^{1/q}  \left(\|F_2\|_{L^p(0,\omega)} + 2\|F_1\|_{L^p(0,\omega)} + 3 \|F_1''\|_{L^p(0,\omega)}\right)  \\ \\

& \leqslant & \dis \frac{2\left(1 - e^{-\omega q \sqrt{-\lambda}}\right)^{1/q}}{q^{1/q} \sqrt{-\lambda}^{1/q}} \, \left(\|F_2\|_{L^p(0,\omega)} + 2\|F_1\|_{L^p(0,\omega)} + 3 \|F_1''\|_{L^p(0,\omega)}\right) \\ \\

& \leqslant & \dis \frac{2}{\sqrt{-\lambda}^{1-1/p}} \, \left(\|F_2\|_{L^p(0,\omega)} + 2\|F_1\|_{L^p(0,\omega)} + 3 \|F_1''\|_{L^p(0,\omega)}\right).
\end{array}
\end{equation*}

\item From \eqref{alpha i}, \eqref{v} and the fact that $|\alpha_1| = |\alpha_2| = \sqrt{1-\lambda} > \sqrt{-\lambda}$, we have
$$\begin{array}{rcl}
\|v\|_{L^p(0,\omega)} & \leqslant & \dis  \frac{|I(0)| + |I(\omega)|}{2\sqrt{1-\lambda}\left(1 - e^{- 2\omega \sqrt{\varepsilon_0}} \right)} \left( \int_0^\omega e^{-p\theta \sqrt{-\lambda}}~d\theta\right)^{1/p} + \frac{-\lambda}{(1-\lambda)^2} \, \|F_1''\|_{L^p(0,\omega)} \\ \ecart
&& \dis + \frac{|I(0)| + |I(\omega)|}{2\sqrt{1-\lambda}\left(1 - e^{- 2\omega \sqrt{\varepsilon_0}} \right)} \left( \int_0^\omega e^{-p(\omega-\theta) \sqrt{-\lambda}}~d\theta\right)^{1/p} + \frac{1}{2\sqrt{1-\lambda}}\, \|I\|_{L^p(0,\omega)} \\ \\

& \leqslant & \dis \frac{\left(|I(0)| + |I(\omega)|\right)}{\sqrt{1-\lambda}\,\sqrt{-\lambda}^{1/p} p^{1/p} \left(1 - e^{- 2\omega \sqrt{\varepsilon_0}} \right)} + \frac{1}{2\sqrt{1-\lambda}}\, \|I\|_{L^p(0,\omega)} \\ \ecart
&& \dis + \frac{1}{1-\lambda}\|F_1''\|_{L^p(0,\omega)}, 
\end{array}$$
hence
$$\begin{array}{rcl}
\|v\|_{L^p(0,\omega)} & \leqslant & \dis \frac{2 \left(\|F_2\|_{L^p(0,\omega)} + 2\|F_1\|_{L^p(0,\omega)} + 3 \|F_1''\|_{L^p(0,\omega)}\right)}{\sqrt{1-\lambda}\,\sqrt{-\lambda}^{1/p+1/q}\, q^{1/q}\, p^{1/p} \left(1 - e^{- 2\omega \sqrt{\varepsilon_0}} \right)}  \\ \ecart
&& \dis + \frac{\|F_2\|_{L^p(0,\omega)} + 2\|F_1\|_{L^p(0,\omega)} + 3 \|F_1''\|_{L^p(0,\omega)}}{\sqrt{1-\lambda}\,\sqrt{-\lambda}} + \frac{1}{1-\lambda} \, \|F_1''\|_{L^p(0,\omega)} \\ \\

& \leqslant & \dis\frac{M_1}{\sqrt{1-\lambda}\,\sqrt{-\lambda}} \left(\|F_2\|_{L^p(0,\omega)} + 2 \|F_1\|_{L^p(0,\omega)} + 3 \|F_1''\|_{L^p(0,\omega)}\right).
\end{array}$$

\item From \eqref{alpha i} and \eqref{J}, we have
$$\begin{array}{rcl}
\|J \|_{L^p(0,\omega)} & \leqslant & \dis \sup_{\theta \in [0,\omega]}\left( \int_0^\theta e^{-(\theta-s) \sqrt{-\lambda}}~ds + \int_\theta^\omega e^{-(s-\theta)\sqrt{-\lambda} }~ds\right) \|v\|_{L^p(0,\omega)} \\ \ecart

& \leqslant & \dis \sup_{\theta \in [0,\omega]}\left( \frac{1 - e^{-\theta \sqrt{-\lambda}}}{\sqrt{-\lambda}} + \frac{1 - e^{-(\omega-\theta)\sqrt{-\lambda} }}{\sqrt{-\lambda}}\right) \|v\|_{L^p(0,\omega)} \\ \ecart

& \leqslant & \dis\frac{2}{\sqrt{-\lambda}}\, \|v\|_{L^p(0,\omega)}.
\end{array}$$

\item Due to \eqref{alpha i}, \eqref{J} and the Hölder inequality, we deduce that
$$\begin{array}{rcl}
|J(0)| + |J(\omega)| & \leqslant & \dis \int_0^\omega e^{-s\sqrt{-\lambda} } |v(s)| ~ds + \int_0^\omega e^{-(\omega-s) \sqrt{-\lambda}} |v(s)| ~ds \\ \ecart

& \leqslant & \dis \left(\left(\int_0^\omega e^{-sq\sqrt{-\lambda} } ~ds\right)^{1/q} + \left(\int_0^\omega e^{-(\omega-s)q\sqrt{-\lambda}} ~ds\right)^{1/q}\right) \|v\|_{L^p(0,\omega)} \\ \ecart

& \leqslant & \dis \frac{2\left(1 - e^{-\omega q \sqrt{-\lambda}}\right)^{1/q}}{q^{1/q} \sqrt{-\lambda}^{1/q}}\, \|v\|_{L^p(0,\omega)} \\ \ecart

& \leqslant & \dis\frac{2}{\sqrt{-\lambda}^{1-1/p}}\, \|v\|_{L^p(0,\omega)}.
\end{array}$$
\end{enumerate}
\end{proof}

\begin{Lem}\label{Lem estim norm Lp difference exp}
Let $\lambda < 0$. Then, we have
$$\left(\int_0^\omega \left|e^{-\theta\alpha_1} - e^{-\theta\alpha_2}\right|^p~d\theta\right)^{1/p} \leqslant \frac{4}{\sqrt{-\lambda}^{1+1/p}},$$
and
$$\left(\int_0^\omega \left|e^{-(\omega-\theta)\alpha_1} - e^{-(\omega-\theta)\alpha_2}\right|^p~d\theta\right)^{1/p} \leqslant \frac{4}{\sqrt{-\lambda}^{1+1/p}}.$$
\end{Lem}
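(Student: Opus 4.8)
The plan is to reduce everything to the elementary identity coming from \eqref{alpha i}, namely $\alpha_1 = \sqrt{-\lambda}+i$ and $\alpha_2 = \sqrt{-\lambda}-i$, so that for $\theta \in [0,\omega]$,
$$e^{-\theta\alpha_1} - e^{-\theta\alpha_2} = e^{-\theta\sqrt{-\lambda}}\left(e^{-i\theta} - e^{i\theta}\right) = -2i\, e^{-\theta\sqrt{-\lambda}}\sin\theta,$$
whence $\left|e^{-\theta\alpha_1} - e^{-\theta\alpha_2}\right| = 2\,e^{-\theta\sqrt{-\lambda}}\,|\sin\theta|$. The whole point is that the two exponentials share the same real part $\sqrt{-\lambda}$ and differ only through a purely imaginary exponent, so their difference is governed by $|\sin\theta|$.

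Next I would exploit the linear vanishing of the sine at the origin through $|\sin\theta| \leqslant \theta$ for $\theta \geqslant 0$, which gives the pointwise estimate
$$\left|e^{-\theta\alpha_1} - e^{-\theta\alpha_2}\right| \leqslant 2\,\theta\, e^{-\theta\sqrt{-\lambda}}.$$
This is precisely the step that produces the correct decay order: one factor $\sqrt{-\lambda}^{-1/p}$ will arise from the $L^p$-integration of the exponential, and the extra full factor $\sqrt{-\lambda}^{-1}$ from the linear factor $\theta$, together yielding $\sqrt{-\lambda}^{-(1+1/p)}$.

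Then, writing $s := \sqrt{-\lambda} > 0$, I would estimate the $L^p$ norm by extending the integral from $(0,\omega)$ to $(0,+\infty)$ (the integrand is nonnegative) and splitting the exponential: since $\max_{\theta \geqslant 0}\theta\, e^{-s\theta/2} = \tfrac{2}{es}$, one has $2\theta\, e^{-s\theta} \leqslant \tfrac{4}{es}\,e^{-s\theta/2}$, so that
$$\left(\int_0^\omega \left|e^{-\theta\alpha_1} - e^{-\theta\alpha_2}\right|^p d\theta\right)^{1/p} \leqslant \frac{4}{es}\left(\int_0^{+\infty} e^{-ps\theta/2}\,d\theta\right)^{1/p} = \frac{4}{e}\left(\frac{2}{p}\right)^{1/p}\frac{1}{s^{1+1/p}}.$$
(Equivalently, one can integrate $\theta^p e^{-ps\theta}$ directly against a Gamma function.) It then only remains to check that the numerical prefactor does not exceed $4$: since $(2/p)^{1/p} \leqslant 2$ for all $p \geqslant 1$, we obtain $\tfrac{4}{e}(2/p)^{1/p} \leqslant \tfrac{8}{e} < 4$, which is exactly the announced bound $4/\sqrt{-\lambda}^{1+1/p}$.

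Finally, the second inequality follows from the first by the change of variable $\theta \mapsto \omega-\theta$, under which the $L^p(0,\omega)$ norm is invariant while $e^{-(\omega-\theta)\alpha_j}$ is turned into $e^{-\theta\alpha_j}$; so no separate computation is needed. I do not expect any genuine obstacle here: the argument is essentially a one-line factorization followed by a routine $L^p$ estimate. The only mildly delicate point is bookkeeping, namely tracking the exact power $\sqrt{-\lambda}^{1+1/p}$ and verifying that the constant remains below $4$ uniformly in $p$, rather than any hidden analytical difficulty.
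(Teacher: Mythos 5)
Your proposal is correct and follows essentially the same route as the paper's proof: both factor the difference as $2\,e^{-\theta\sqrt{-\lambda}}\,|\sin\theta|$, use $|\sin\theta|\leqslant\theta$, extend the integral to $(0,+\infty)$, and absorb the linear factor into half of the exponential before integrating the rest (the paper merely rescales via $x=\theta\sqrt{-\lambda}$ before applying the bound $x^p e^{-px/2}\leqslant 1$, whereas you keep $s=\sqrt{-\lambda}$ explicit and use $\theta e^{-s\theta/2}\leqslant \tfrac{2}{es}$). Your constant check $\tfrac{4}{e}(2/p)^{1/p}\leqslant \tfrac{8}{e}<4$ and the change of variable $\theta\mapsto\omega-\theta$ for the second estimate match the paper's argument as well.
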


\begin{proof}
For $x\geqslant 0$, we have $e^{-\frac{px}{2}}x^p <1$, so
$$\int_0^{+\infty} e^{-px}x^p~ dx = \int_0^{+\infty} e^{-\frac{px}{2}}e^{-\frac{px}{2}}x^p~dx \leqslant \int_0^{+\infty} e^{-\frac{px}{2}}~dx = \frac{2}{p}.$$
Then, from \eqref{alpha i}, we have
$$\int_0^\omega \left|e^{-\theta\alpha_1} - e^{-\theta\alpha_2}\right|^p~d\theta = \int_0^\omega \left|e^{-\theta\sqrt{-\lambda}} \left(e^{-\theta i} - e^{\theta i}\right)\right|^p~d\theta = 2^p \int_0^\omega e^{-p\theta\sqrt{-\lambda}} |\sin(\theta)|^p~d\theta,$$
hence, setting $x=\theta\sqrt{-\lambda}$, it follows that
$$\begin{array}{rcl}
\dis \int_0^\omega \left|e^{-\theta\alpha_1} - e^{-\theta\alpha_2}\right|^p~d\theta & = & \dis 2^p \int_0^{\omega\sqrt{-\lambda}} e^{-p x} \left|\sin\left(\frac{x}{\sqrt{-\lambda}}\right)\right|^p ~\frac{dx}{\sqrt{-\lambda}} \\ \ecart

& \leqslant & \dis \frac{2^p}{\sqrt{-\lambda}} \int_0^{\omega\sqrt{-\lambda}} e^{-p x} \left(\frac{x}{\sqrt{-\lambda}}\right)^p ~dx \\ \ecart

& \leqslant & \dis \frac{2^p}{\sqrt{-\lambda}^{p+1}} \int_0^{+\infty} e^{-p x} x^p ~dx \\ \ecart

& \leqslant & \dis \frac{2^{p+1}}{p\sqrt{-\lambda}^{p+1}} \leqslant \frac{2^{2p}}{\sqrt{-\lambda}^{p+1}}.
\end{array}$$
The second estimate is obtained by change of variable, taking $\omega-\theta$ instead of $\theta$.  
\end{proof}

\begin{Lem}\label{Lem estimation beta i}
For all $\lambda \in (-\infty, -\varepsilon_0]$, where $\varepsilon_0$ is defined in \refP{Prop A inversible}, the constants $\beta_1$, $\beta_2$, $\beta_3$ and $\beta_4$, defined by \eqref{beta i}, satisfy
$$\max\left(|\beta_1+\beta_2|, |\beta_3+\beta_4| \right) \leqslant \frac{M_1 \left(\|F_2\|_{L^p(0,\omega)} + 2 \|F_1\|_{L^p(0,\omega)} + 3 \|F_1''\|_{L^p(0,\omega)}\right)}{\omega (-\lambda)\sqrt{-\lambda}^{2-1/p}\left(1 - e^{-2\omega \sqrt{\varepsilon_0}} - 2\omega \sqrt{\varepsilon_0}\, e^{-\omega \sqrt{\varepsilon_0}}\right)\left(1-e^{-\omega\sqrt{\varepsilon_0}}\right)},$$
and
$$\max\left(|\beta_2|, |\beta_4| \right) \leqslant \frac{M_1 \left(\|F_2\|_{L^p(0,\omega)} + 2 \|F_1\|_{L^p(0,\omega)} + 3 \|F_1''\|_{L^p(0,\omega)}\right)}{2(-\lambda)\sqrt{-\lambda}^{1-1/p}\left(1 - e^{-2\omega \sqrt{\varepsilon_0}} - 2\omega \sqrt{\varepsilon_0}\, e^{-\omega \sqrt{\varepsilon_0}}\right)},$$
where $M_1 = 2 + \frac{2}{1 - e^{- 2\omega \sqrt{\varepsilon_0}}}$.
\end{Lem}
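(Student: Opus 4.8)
The plan is to reduce each of the four constants to the quantity $\big(\|F_2\|_{L^p(0,\omega)}+2\|F_1\|_{L^p(0,\omega)}+3\|F_1''\|_{L^p(0,\omega)}\big)$ via the estimates on $J$ already obtained in \refL{Lem estim J, I, v}, after first simplifying the two sums $\beta_1+\beta_2$ and $\beta_3+\beta_4$. Adding the first two lines of \eqref{beta i} and factoring out $\tfrac{1}{4i}U_1^{-1}(J(0)-J(\omega))$ gives
$$\beta_1 + \beta_2 = \frac{1}{4i}\,U_1^{-1}\,(J(0)-J(\omega))\left(\frac{1-e^{-\omega\alpha_1}}{1-e^{-\omega\alpha_2}}-1\right) = \frac{1}{4i}\,U_1^{-1}\,(J(0)-J(\omega))\,\frac{e^{-\omega\alpha_2}-e^{-\omega\alpha_1}}{1-e^{-\omega\alpha_2}}.$$
From \eqref{alpha i} one has $e^{-\omega\alpha_2}-e^{-\omega\alpha_1}=e^{-\omega\sqrt{-\lambda}}(e^{i\omega}-e^{-i\omega})=2i\,e^{-\omega\sqrt{-\lambda}}\sin(\omega)$, so that
$$\beta_1 + \beta_2 = \frac{1}{2}\,U_1^{-1}\,(J(0)-J(\omega))\,\frac{e^{-\omega\sqrt{-\lambda}}\sin(\omega)}{1-e^{-\omega\alpha_2}}.$$
The identical manipulation on the last two lines of \eqref{beta i} yields the analogous formula for $\beta_3+\beta_4$, with $U_1$ replaced by $U_2$ and $1-e^{-\omega\alpha_2}$ replaced by $1+e^{-\omega\alpha_2}$.

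Next I would bound each factor on the range $\lambda\leqslant-\varepsilon_0$. For the factors $U_1^{-1}$, $U_2^{-1}$ I use \eqref{U > 0} and \eqref{V > 0} together with the monotonicity of $h$ established in the preceding remark: since $\omega\sqrt{-\lambda}\geqslant\omega\sqrt{\varepsilon_0}$, both $U_1$ and $U_2$ are bounded below by $h(\omega\sqrt{\varepsilon_0})=1-e^{-2\omega\sqrt{\varepsilon_0}}-2\omega\sqrt{\varepsilon_0}\,e^{-\omega\sqrt{\varepsilon_0}}$, which is exactly the first parenthesis in the claimed denominators. For the remaining denominator, the reverse triangle inequality and $|e^{-\omega\alpha_2}|=e^{-\omega\sqrt{-\lambda}}$ give $|1\mp e^{-\omega\alpha_2}|\geqslant 1-e^{-\omega\sqrt{-\lambda}}\geqslant 1-e^{-\omega\sqrt{\varepsilon_0}}$. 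Finally, chaining statements 5 and 3 of \refL{Lem estim J, I, v} yields
$$|J(0)|+|J(\omega)|\leqslant \frac{2M_1}{(-\lambda)\sqrt{-\lambda}^{1-1/p}}\big(\|F_2\|_{L^p(0,\omega)}+2\|F_1\|_{L^p(0,\omega)}+3\|F_1''\|_{L^p(0,\omega)}\big),$$
which controls both $|J(0)-J(\omega)|$ and $|J(0)+J(\omega)|$.

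Assembling these estimates for $\beta_1+\beta_2$ (and identically for $\beta_3+\beta_4$) leaves a factor $e^{-\omega\sqrt{-\lambda}}\sin(\omega)$ in the numerator; the one genuinely nontrivial point is to convert it into the $\omega^{-1}$ asserted by the statement. This is carried out with the two elementary inequalities $|\sin(\omega)|\leqslant 1$ and $x e^{-x}\leqslant 1$ for $x\geqslant 0$, applied to $x=\omega\sqrt{-\lambda}$, which give $e^{-\omega\sqrt{-\lambda}}\sin(\omega)\leqslant \tfrac{1}{\omega\sqrt{-\lambda}}$ and hence both the extra power turning $\sqrt{-\lambda}^{1-1/p}$ into $\sqrt{-\lambda}^{2-1/p}$ and the $\omega$ in the denominator. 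The bounds on $|\beta_2|$ and $|\beta_4|$ are simpler, since there no cancellation or $\sin(\omega)$ factor occurs: I would estimate directly $|\beta_2|\leqslant \tfrac{1}{4}|U_1|^{-1}\,(|J(0)|+|J(\omega)|)$ (and likewise for $\beta_4$) and insert the same lower bound $U_1,U_2\geqslant h(\omega\sqrt{\varepsilon_0})$ together with the $J$-estimate above, which reproduces the second asserted bound. The main obstacle is purely bookkeeping: tracking the powers of $\sqrt{-\lambda}$ through the chain of substitutions and pairing each surviving factor with the correct term in the denominators, rather than any conceptual difficulty.
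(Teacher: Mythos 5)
Your proposal is correct and follows essentially the same route as the paper's own proof: the same reduction of $\beta_1+\beta_2$ and $\beta_3+\beta_4$ to the factor $e^{-\omega\alpha_2}-e^{-\omega\alpha_1}$, the lower bounds $U_1,U_2\geqslant h(\omega\sqrt{\varepsilon_0})$, the reverse triangle inequality for $|1\mp e^{-\omega\alpha_2}|$, the chaining of statements 5 and 3 of Lemma~\ref{Lem estim J, I, v}, and the inequality $xe^{-x}\leqslant 1$ at $x=\omega\sqrt{-\lambda}$ to produce the factor $\frac{1}{\omega\sqrt{-\lambda}}$. Your bookkeeping of the powers of $\sqrt{-\lambda}$ and of the constants matches the stated bounds, so no gap remains.
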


\begin{proof}
Recall that $\beta_i$, $i=1,2,3,4$, depends on $U_1^{-1}$ and $U_2^{-1}$. From \eqref{U > 0} and \eqref{V > 0}, it follows
$$U_1 \geqslant h(\omega\sqrt{-\lambda}) \geqslant h(\omega\sqrt{\varepsilon_0}) = 1 - e^{-2\omega \sqrt{\varepsilon_0}} - 2 \omega\sqrt{\varepsilon_0} \, e^{-\omega \sqrt{\varepsilon_0}} > 0,$$
and 
$$U_2 \geqslant h(\omega\sqrt{-\lambda}) \geqslant h(\omega\sqrt{\varepsilon_0}) = 1 - e^{-2\omega \sqrt{\varepsilon_0}} - 2 \omega\sqrt{\varepsilon_0} \,e^{-\omega \sqrt{\varepsilon_0}} > 0.$$
Thus, we deduce
$$U_1^{-1} \leqslant \frac{1}{1 - e^{-2\omega \sqrt{\varepsilon_0}} - 2 \omega\sqrt{\varepsilon_0}\, e^{-\omega \sqrt{\varepsilon_0}}} \quad \text{and} \quad U_2^{-1} \leqslant \frac{1}{1 - e^{-2\omega \sqrt{\varepsilon_0}} - 2 \omega\sqrt{\varepsilon_0} \,e^{-\omega \sqrt{\varepsilon_0}}}.$$
Therefore, from \eqref{beta i} and \refL{Lem estim J, I, v}, we have
\begin{equation*}
\begin{array}{rcl}
|\beta_1+\beta_2| & \leqslant & \dis \frac{|J(0)|+|J(\omega)|}{4\left(1 - e^{-2\omega \sqrt{\varepsilon_0}} - 2\omega \sqrt{\varepsilon_0}\, e^{-\omega \sqrt{\varepsilon_0}}\right)} \, \left| \frac{1-e^{-\omega\alpha_1}}{1-e^{-\omega\alpha_2}} - 1\right| \\ \ecart

& \leqslant & \dis \frac{\|v\|_{L^p(0,\omega)}}{2\sqrt{-\lambda}^{1-1/p}\left(1 - e^{-2\omega \sqrt{\varepsilon_0}} - 2\omega \sqrt{\varepsilon_0}\, e^{-\omega \sqrt{\varepsilon_0}}\right)} \, \left| \frac{e^{-\omega\alpha_2}-e^{-\omega\alpha_1}}{1-e^{-\omega\alpha_2}}\right| \\ \ecart

& \leqslant & \dis \frac{M_1 \left(\|F_2\|_{L^p(0,\omega)} + 2 \|F_1\|_{L^p(0,\omega)} + 3 \|F_1''\|_{L^p(0,\omega)}\right)}{2 (-\lambda)\sqrt{-\lambda}^{1-1/p}\left(1 - e^{-2\omega \sqrt{\varepsilon_0}} - 2\omega \sqrt{\varepsilon_0}\, e^{-\omega \sqrt{\varepsilon_0}}\right)} \, \frac{2e^{-\omega\sqrt{-\lambda}}}{1-e^{-\omega\sqrt{\varepsilon_0}}} \\ \ecart

& \leqslant & \dis \frac{M_1 \left(\|F_2\|_{L^p(0,\omega)} + 2 \|F_1\|_{L^p(0,\omega)} + 3 \|F_1''\|_{L^p(0,\omega)}\right)}{\omega (-\lambda)\sqrt{-\lambda}^{2-1/p}\left(1 - e^{-2\omega \sqrt{\varepsilon_0}} - 2\omega \sqrt{\varepsilon_0}\, e^{-\omega \sqrt{\varepsilon_0}}\right)\left(1-e^{-\omega\sqrt{\varepsilon_0}}\right)} 
\end{array}
\end{equation*}
and similarly
\begin{equation*}
\begin{array}{rcl}
|\beta_3 + \beta_4| & \leqslant & \dis \frac{|J(0)|+|J(\omega)|}{4\left(1 - e^{-2\omega \sqrt{\varepsilon_0}} - 2\omega \sqrt{\varepsilon_0}\, e^{-\omega \sqrt{\varepsilon_0}}\right)} \, \left|1 - \frac{1+e^{-\omega\alpha_1}}{1+e^{-\omega\alpha_2}}\right| \\ \ecart

& \leqslant & \dis \frac{M_1 \left(\|F_2\|_{L^p(0,\omega)} + 2 \|F_1\|_{L^p(0,\omega)} + 3 \|F_1''\|_{L^p(0,\omega)}\right)}{\omega (-\lambda)\sqrt{-\lambda}^{2-1/p}\left(1 - e^{-2\omega \sqrt{\varepsilon_0}} - 2\omega \sqrt{\varepsilon_0}\, e^{-\omega \sqrt{\varepsilon_0}}\right)\left(1-e^{-\omega\sqrt{\varepsilon_0}}\right)}.
\end{array}
\end{equation*}
In the same way, we obtain 
\begin{equation*}
|\beta_2| \leqslant \frac{M_1 \left(\|F_2\|_{L^p(0,\omega)} + 2 \|F_1\|_{L^p(0,\omega)} + 3 \|F_1''\|_{L^p(0,\omega)}\right)}{2(-\lambda)\sqrt{-\lambda}^{1-1/p}\left(1 - e^{-2\omega \sqrt{\varepsilon_0}} - 2\omega \sqrt{\varepsilon_0}\, e^{-\omega \sqrt{\varepsilon_0}}\right)},
\end{equation*}
and
\begin{equation*}
|\beta_4| \leqslant \frac{M_1 \left(\|F_2\|_{L^p(0,\omega)} + 2 \|F_1\|_{L^p(0,\omega)} + 3 \|F_1''\|_{L^p(0,\omega)}\right)}{2(-\lambda)\sqrt{-\lambda}^{1-1/p}\left(1 - e^{-2\omega \sqrt{\varepsilon_0}} - 2\omega \sqrt{\varepsilon_0}\, e^{-\omega \sqrt{\varepsilon_0}}\right)}.
\end{equation*}
\end{proof}

\begin{Prop}\label{Prop estim resolvante A}
$\A$ is closed and densely defined in $X$. Moreover, there exists a constant $M > 0$ such that for all $\lambda \leqslant 0$, operator $\A-\lambda I$ is invertible with bounded inverse and 
\begin{equation*}
\left\Vert \left(\A-\lambda I\right)^{-1}\right\Vert_{\L(X)} \leqslant \frac{M}{1+|\lambda|}.
\end{equation*}
\end{Prop}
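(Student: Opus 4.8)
The plan is to prove the three assertions separately. Density of $D(\A) = [W^{4,p}(0,\omega)\cap W_0^{2,p}(0,\omega)]\times W_0^{2,p}(0,\omega)$ in $X$ is immediate, since this domain contains $C_c^\infty(0,\omega)\times C_c^\infty(0,\omega)$; closedness follows as soon as $\rho(\A)\neq\emptyset$, which is ensured by \refP{Prop A inversible} (namely $0\in\rho(\A)$). The substance of the statement is therefore the uniform resolvent bound. For $F = (F_1,F_2)\in X$ and $\lambda\leqslant 0$, the unique $\Psi=(\psi_1,\psi_2)\in D(\A)$ with $(\A-\lambda I)\Psi = F$ is provided by \refP{Prop sol psi 1}: $\psi_1$ is given by \eqref{psi 1} and $\psi_2 = \lambda\psi_1 + F_1$. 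Hence it suffices to bound $\|\psi_1\|_{W_0^{2,p}(0,\omega)}$ and $\|\psi_2\|_{L^p(0,\omega)}$ by $\frac{M}{1+|\lambda|}\|F\|_X$.

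I would split the range of $\lambda$ into two regimes. On the compact interval $[-\varepsilon_0,0]$, \refP{Prop A inversible} gives $\overline{B(0,\varepsilon_0)}\subset\rho(\A)$, so the resolvent is analytic and thus uniformly bounded there; since $1+|\lambda|$ stays bounded on this interval, the estimate holds with a suitable constant. The real work is the regime $\lambda\leqslant -\varepsilon_0$, where I would insert the estimates of \refL{Lem estim J, I, v}, \refL{Lem estim norm Lp difference exp} and \refL{Lem estimation beta i} into \eqref{psi 1}.

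For $\|\psi_1\|_{W_0^{2,p}(0,\omega)}$, since $\psi_1$ obeys the homogeneous boundary conditions of \eqref{Pb psi}, the Poincaré inequality reduces the problem to bounding $\|\psi_1''\|_{L^p(0,\omega)}$. I would differentiate \eqref{psi 1} twice, using $|\alpha_1|=|\alpha_2|=\sqrt{1-\lambda}$ together with the identity $J'' = \alpha_2^2 J - 2\alpha_2 v$ (an immediate consequence of \eqref{J}), and estimate term by term. Counting powers of $\sqrt{-\lambda}$ via the lemmas ($|\beta_1+\beta_2|,|\beta_3+\beta_4| = O(\sqrt{-\lambda}^{-(4-1/p)})$, $|\beta_2|,|\beta_4| = O(\sqrt{-\lambda}^{-(3-1/p)})$, $\|J\|_{L^p} = O(\sqrt{-\lambda}^{-3})$, $\|v\|_{L^p} = O((-\lambda)^{-1})$, $|J(0)|+|J(\omega)| = O(\sqrt{-\lambda}^{-(3-1/p)})$, $\|e^{-\theta\alpha_2}\|_{L^p} = O(\sqrt{-\lambda}^{-1/p})$ and $\|e^{-\theta\alpha_1}-e^{-\theta\alpha_2}\|_{L^p} = O(\sqrt{-\lambda}^{-(1+1/p)})$), one checks that each product, multiplied by the factors $\alpha_j^2 = O(-\lambda)$ produced by the two derivatives, is of order $(-\lambda)^{-1}$; the dominant contribution is the second derivative $-\frac{\lambda}{(1-\lambda)^2}F_1''$ of the term $-\frac{\lambda}{\alpha_1^2\alpha_2^2}F_1$ of $S$ in \eqref{Sp}, whose $L^p$ norm is $O((-\lambda)^{-1})\|F\|_X$. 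This yields $\|\psi_1\|_{W_0^{2,p}(0,\omega)}\leqslant \frac{C}{-\lambda}\|F\|_X$.

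The delicate point is the $L^p$ bound on $\psi_2 = \lambda\psi_1 + F_1$: a crude triangle inequality produces $\|F_1\|_{L^p}$, which does not decay, so a genuine cancellation between $\lambda\psi_1$ and $F_1$ is needed. Using $\alpha_1^2\alpha_2^2 = (1-\lambda)^2$, I would isolate the leading part $-\frac{\lambda}{(1-\lambda)^2}F_1$ of $\psi_1$ and set $\tilde\psi_1 := \psi_1 + \frac{\lambda}{(1-\lambda)^2}F_1$, which collects only the boundary and $J$-terms of \eqref{psi 1} and therefore satisfies $\|\tilde\psi_1\|_{L^p} = O((-\lambda)^{-2})\|F\|_X$. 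Then $\psi_2 = \frac{1-2\lambda}{(1-\lambda)^2}F_1 + \lambda\tilde\psi_1$, where $\frac{|1-2\lambda|}{(1-\lambda)^2}\leqslant \frac{2}{1+|\lambda|}$ and $\|\lambda\tilde\psi_1\|_{L^p} = O((-\lambda)^{-1})\|F\|_X$, whence $\|\psi_2\|_{L^p}\leqslant \frac{C}{-\lambda}\|F\|_X$. Combining the two bounds and using $\frac{1}{-\lambda}\leqslant (1+\varepsilon_0^{-1})\frac{1}{1+|\lambda|}$ for $\lambda\leqslant -\varepsilon_0$ gives the claim on the whole negative axis. I expect this cancellation in the $\psi_2$ estimate — and, to a lesser extent, checking that no $\psi_1''$ term escapes the $(-\lambda)^{-1}$ bound despite the two factors $\alpha_j$ — to be the main obstacle.
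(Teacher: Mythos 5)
Your proposal is correct and follows essentially the same route as the paper's proof: the same representation formula from \refP{Prop sol psi 1}, the split at $\lambda=-\varepsilon_0$ with the regime $\lambda\leqslant-\varepsilon_0$ handled by power counting via \refL{Lem estim J, I, v}, \refL{Lem estim norm Lp difference exp} and \refL{Lem estimation beta i}, the Poincar\'e reduction of $\|\psi_1\|_{W_0^{2,p}(0,\omega)}$ to $\|\psi_1''\|_{L^p(0,\omega)}$ using $J''=\alpha_2^2 J-2\alpha_2 v$, and the same cancellation $1-\tfrac{\lambda^2}{(1-\lambda)^2}=\tfrac{1-2\lambda}{(1-\lambda)^2}$ for $\psi_2$. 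The only cosmetic difference is bookkeeping: the paper exhibits this cancellation directly in the expansion \eqref{lambda Sp + F1} of $\lambda S+F_1$, whereas you isolate it by introducing $\tilde\psi_1=\psi_1+\tfrac{\lambda}{(1-\lambda)^2}F_1$; the resulting estimates are identical.
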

\begin{Rem}
This result implies, in particular, that $-\sqrt{\A}$ is well-defined and generates a uniformly bounded analytic semigroup $\left(e^{-s\sqrt{\A}}\right)_{s\geqslant 0}$.
\end{Rem}
\begin{proof}
Recall that
$$D(\mathcal{A}) = \left[ W^{4,p}(0,\omega)\cap W_{0}^{2,p}(0,\omega)\right] \times W_{0}^{2,p}(0,\omega).$$
It is clear that 
$$\mathcal{D}(0,\omega)\times \mathcal{D}(0,\omega)\subset D(\A) \subset X=W_{0}^{2,p}(0,\omega)\times L^{p}(0,\omega),$$ 
where $\mathcal{D}(0,\omega)$ is the set of $C^{\infty}$-functions with compact support in $(0,\omega)$. Since $\mathcal{D}(0,\omega)$ is dense in each spaces $W_{0}^{2,p}(0,\omega)$ and $L^{p}(0,\omega)$ for their respective norms, then $D(\mathcal{A})$ is dense in $X$.

From \refP{Prop A inversible}, $0 \in \rho(\A)$, thus $\A$ is closed. From \refP{Prop sol psi 1}, for all $\lambda < 0$, there exists a unique couple 
$$(\psi_1, \psi_2) \in \left(W^{4,p}(0,\omega)\cap W_{0}^{2,p}(0,\omega)\right) \times W_{0}^{2,p}(0,\omega)$$
which satisfies 
\begin{equation}\label{syst psi_1 psi_2}
\left\{ 
\begin{array}{lll}
\psi_2 &=& \lambda \psi _{1} + F_{1} \\ 
\psi _{1}^{(4)} + 2(\lambda+1) \psi''_{1} + (\lambda-1)^{2}\psi_{1} &=& G_\lambda, 
\end{array}
\right.
\end{equation}
where $G_\lambda = -F_2 - 2(F''_1-F_1) - \lambda F_1$. Then $\RR_- \subset \rho(\A)$ and 
$$\Psi = \left(\begin{array}{c}
\psi_1 \\
\psi_2
\end{array}\right) = \left( \A - \lambda I\right)^{-1} \left(\begin{array}{c}
F_1 \\
F_2
\end{array}\right) = \left( \A - \lambda I\right)^{-1} F,$$
where $\psi_1$ is given by \eqref{psi 1}-\eqref{beta i}-\eqref{Sp} and
\begin{equation}\label{psi2}
\begin{array}{lll}
\psi_2 (\theta) & := & \dis \lambda e^{-\theta \alpha_2} (\beta_1 + \beta_2 + \beta_3 + \beta_4) + \lambda e^{-(\omega-\theta)\alpha_2} (\beta_3 + \beta_4 - \beta_1 - \beta_2) \\ \ecart
&& \dis + \lambda \left(e^{-\theta \alpha_1} - e^{-\theta \alpha_2}\right) (\beta_2 + \beta_4) + \lambda \left(e^{-(\omega-\theta) \alpha_1} - e^{-(\omega-\theta) \alpha_2}\right) (\beta_4 - \beta_2) \\ \ecart
&& \dis + \lambda S(\theta) + F_1(\theta),
\end{array}
\end{equation}
with $\beta_i$, $i=1,2,3,4$ are defined in \eqref{beta i}-\eqref{U-V M}. From \eqref{Sp}, we have
\begin{equation}\label{lambda Sp + F1}
\begin{array}{rcl}
\lambda S(\theta) + F_1(\theta) & = & \dis \frac{\lambda }{2\alpha_2\left(1 - e^{- 2\omega \alpha_2} \right)} \,e^{-\theta \alpha_2} \left(J(0) - e^{- \omega \alpha_2} J(\omega)\right) \\ \ecart 
&& \dis + \frac{\lambda }{2\alpha_2\left(1 - e^{- 2\omega \alpha_2} \right)} \, e^{-(\omega-\theta) \alpha_2}\left( J(\omega) - e^{- \omega \alpha_2} J(0)\right) \\ \ecart
&& \dis - \frac{\lambda^2}{\alpha_1^2\alpha_2^2}\, F_1(\theta) + F_1(\theta) - \frac{\lambda}{2\alpha_2}\, J(\theta),
\end{array}
\end{equation}
where $J(\theta)$ is given by \eqref{J}. Our aim is to prove that, for all $\lambda \leqslant 0$, there exists $M > 0$, such that
$$\|(\A - \lambda I)^{-1} F\|_{\L(X)} \leqslant \frac{M}{1+|\lambda|} \|F\|_X,$$
with
\begin{equation}\label{Def norme X}
\|F\|_X = \left\|\left(\begin{array}{c}
F_1 \\
F_2
\end{array}\right)\right\|_X = \|F_1\|_{W_0^{2,p}(0,\omega)} + \|F_2\|_{L^p(0,\omega)}.
\end{equation}
To this end, we consider that $\lambda \in (-\infty, - \varepsilon_0]$, where $\varepsilon_0$ is defined in \refP{Prop A inversible}. We first study $S''$ and $\psi_1''$. From \eqref{Sp}, for $a.e.$ $\theta \in [0,\omega]$, we have
\begin{equation*}
\begin{array}{rcl}
S''(\theta) & = & \dis \frac{\alpha_2 \,e^{-\theta \alpha_2}}{2\left(1 - e^{- 2\omega \alpha_2} \right)}  \left(J(0) - e^{- \omega \alpha_2} J(\omega)\right) - \frac{\lambda}{\alpha_1^2\alpha_2^2} F_1''(\theta) \\ \ecart 
&& \dis + \frac{\alpha_2\,e^{-(\omega-\theta) \alpha_2}}{2\left(1 - e^{- 2\omega \alpha_2} \right)} \left( J(\omega) - e^{- \omega \alpha_2} J(0)\right) - \frac{1}{2\alpha_2} J''(\theta),
\end{array}
\end{equation*}
and from \eqref{J}, we obtain $J''(\theta) = \alpha_2^2 J(\theta) - 2\alpha_2 v(\theta)$, hence
\begin{equation*}
\begin{array}{rcl}
S''(\theta) & = &  \dis \frac{\alpha_2 \,e^{-\theta \alpha_2}}{2\left(1 - e^{- 2\omega \alpha_2} \right)}  \left(J(0) - e^{- \omega \alpha_2} J(\omega)\right) - \frac{\lambda}{\alpha_1^2\alpha_2^2} F_1''(\theta) \\ \ecart 
&& \dis + \frac{\alpha_2\,e^{-(\omega-\theta) \alpha_2}}{2\left(1 - e^{- 2\omega \alpha_2} \right)} \left( J(\omega) - e^{- \omega \alpha_2} J(0)\right) - \frac{\alpha_2}{2} \, J(\theta) + v(\theta).
\end{array}
\end{equation*}
Then, since $\alpha_1 = \sqrt{-\lambda} + i$ and $\alpha_2 = \sqrt{-\lambda} - i$, we have $\left|e^{-\omega\alpha_1}\right| = \left|e^{-\omega\alpha_2}\right| = e^{-\omega\sqrt{-\lambda}} \leqslant 1$ with $-\lambda \geqslant \varepsilon_0$, thus 
\begin{equation*}
\begin{array}{rcl}
\|S''\|_{L^p(0,\omega)} & \leqslant & \dis \frac{\sqrt{1-\lambda} \, \left(|J(0)| + |J(\omega)|\right) }{2\left(1 - e^{- 2\omega \sqrt{\varepsilon_0}} \right)} \left( \int_0^\omega e^{-p\theta \sqrt{-\lambda}}~d\theta\right)^{1/p} \\ \ecart
&& \dis + \frac{\sqrt{1-\lambda} \,\left(|J(0)| + |J(\omega)|\right) }{2\left(1 - e^{- 2\omega \sqrt{\varepsilon_0}} \right)} \left( \int_0^\omega e^{-p(\omega-\theta) \sqrt{-\lambda}}~d\theta\right)^{1/p} \\ \ecart
&& \dis + \frac{-\lambda}{(1-\lambda)^2} \, \|F_1''\|_{L^p(0,\omega)} + \frac{\sqrt{1-\lambda}}{2}\, \|J\|_{L^p(0,\omega)} + \|v\|_{L^p(0,\omega)} \\ \\

& \leqslant & \dis \frac{\sqrt{1-\lambda} \,\left(|J(0)| + |J(\omega)|\right)}{\sqrt{-\lambda}^{1/p} \left(1 - e^{- 2\omega \sqrt{\varepsilon_0}} \right)}  + \frac{1}{1-\lambda} \, \|F_1''\|_{L^p(0,\omega)} \\ \ecart 
&& \dis + \frac{\sqrt{1-\lambda}}{2}\, \|J\|_{L^p(0,\omega)} + \|v\|_{L^p(0,\omega)}.
\end{array}
\end{equation*}
From \refL{Lem estim J, I, v}, we have
\begin{equation*}
\begin{array}{rcl}
\|S''\|_{L^p(0,\omega)} & \leqslant & \dis \frac{2\sqrt{1-\lambda}}{\sqrt{-\lambda}^{1-1/p+1/p}\, \left(1 - e^{- 2\omega \sqrt{\varepsilon_0}} \right)} \,\|v\|_{L^p(0,\omega)} \\ \ecart
&& \dis + \frac{\sqrt{1-\lambda}}{\sqrt{-\lambda}}\, \|v\|_{L^p(0,\omega)} + \|v\|_{L^p(0,\omega)} + \frac{1}{1-\lambda} \, \|F_1''\|_{L^p(0,\omega)} \\ \\

& \leqslant & \dis \frac{2M_1}{-\lambda \left(1 - e^{- 2\omega \sqrt{\varepsilon_0}} \right)} \, \left(\|F_2\|_{L^p(0,\omega)} + 2 \|F_1\|_{L^p(0,\omega)} + 3 \|F_1''\|_{L^p(0,\omega)}\right) \\ \ecart
&& \dis + \frac{2M_1}{-\lambda} \left(\|F_2\|_{L^p(0,\omega)} + 2 \|F_1\|_{L^p(0,\omega)} + 3 \|F_1''\|_{L^p(0,\omega)}\right) + \frac{1}{1-\lambda} \, \|F_1''\|_{L^p(0,\omega)}.
\end{array}
\end{equation*}
Finally, we obtain
\begin{equation}\label{norme(Sp'')}
\|S''\|_{L^p(0,\omega)} \leqslant \frac{M_2}{-\lambda} \, \left(\|F_2\|_{L^p(0,\omega)} + 2 \|F_1\|_{L^p(0,\omega)} + 3 \|F_1''\|_{L^p(0,\omega)}\right),
\end{equation}
where $M_2 = \frac{2M_1}{1 - e^{- 2\omega \sqrt{\varepsilon_0}}} + 2 M_1 +1$.

Now, we have
\begin{equation*}
\begin{array}{rcl}
\psi_1''(\theta) - S''(\theta) & = &\dis  \alpha_2^2\,e^{-\theta \alpha_2} (\beta_1 + \beta_2 + \beta_3 + \beta_4) +  \alpha_2^2\,e^{-(\omega-\theta)\alpha_2} (\beta_3 + \beta_4 - \beta_1 - \beta_2) \\ \ecart
&& \dis +  \alpha_2^2\,\left(e^{-\theta \alpha_1} - e^{-\theta \alpha_2}\right) (\beta_2 + \beta_4) + \alpha_2^2\,\left(e^{-(\omega-\theta) \alpha_1} - e^{-(\omega-\theta) \alpha_2}\right) (\beta_4 - \beta_2).
\end{array}
\end{equation*}
Then
\begin{equation*}
\begin{array}{rcl}
\|\psi_1'' - S''\|_{L^p(0,\omega)} & \leqslant & \dis (1-\lambda) (|\beta_1+\beta_2|+|\beta_3+\beta_4|) \left(\int_0^\omega e^{-p\theta\sqrt{-\lambda}}~d\theta\right)^{1/p} \\ \ecart
&& \dis + (1-\lambda) (|\beta_1 + \beta_2|+|\beta_3 + \beta_4|) \left(\int_0^\omega e^{-p(\omega-\theta)\sqrt{-\lambda}}~d\theta\right)^{1/p} \\ \ecart
&& \dis + (1-\lambda) (|\beta_2|+|\beta_4|) \left(\int_0^\omega \left|e^{-\theta\alpha_1} - e^{-\theta\alpha_2}\right|^p ~d\theta\right)^{1/p} \\ \ecart
&& \dis + (1-\lambda) (|\beta_2|+|\beta_4|) \left(\int_0^\omega \left|e^{-(\omega-\theta)\alpha_1} - e^{-(\omega-\theta)\alpha_2}\right|^p~d\theta\right)^{1/p}.
\end{array}
\end{equation*}
Using the fact that
$$\frac{1-\lambda}{-\lambda} = 1 + \frac{1}{-\lambda} \leqslant 1 + \frac{1}{\varepsilon_0},$$
with \refL{Lem estim norm Lp difference exp} and \refL{Lem estimation beta i}, we obtain 
$$\begin{array}{lll}
\|\psi_1'' - S''\|_{L^p(0,\omega)} & \leqslant & \dis\frac{2(1-\lambda) \left(|\beta_1 + \beta_2| + |\beta_3 + \beta_4| \right)}{\sqrt{-\lambda}^{1/p}} + \frac{8 (1-\lambda) (|\beta_2|+|\beta_4|)}{\sqrt{-\lambda}^{1+1/p}} \\ \ecart

& \leqslant & \dis \frac{M_3 \left(\|F_2\|_{L^p(0,\omega)} + 2 \|F_1\|_{L^p(0,\omega)} + 3 \|F_1''\|_{L^p(0,\omega)}\right)}{-\lambda},
\end{array}$$
where $M_3 = \frac{4 M_1 \left(1+\frac{1}{\omega}\right)\left(1+\frac{1}{\varepsilon_0}\right)}{\left(1 - e^{-2\omega \sqrt{\varepsilon_0}} - 2\omega \sqrt{\varepsilon_0}\, e^{-\omega \sqrt{\varepsilon_0}}\right) \left(1 - e^{-\omega \sqrt{\varepsilon_0}}\right)}$.

Due to \eqref{norme(Sp'')}, it follows that
$$\begin{array}{rcl}
\|\psi_1''\|_{L^p(0,\omega)} & \leqslant & \dis \|\psi_1'' - S''\|_{L^p(0,\omega)} + \|S''\|_{L^p(0,\omega)} \\ \ecart

& \leqslant & \dis \frac{M_3 + M_2}{-\lambda}\left(\|F_2\|_{L^p(0,\omega)} + 2 \|F_1\|_{L^p(0,\omega)} + 3 \|F_1''\|_{L^p(0,\omega)}\right) \\ \ecart

& \leqslant & \dis \frac{3(M_2 + M_3)}{-\lambda}\|F\|_{X}.
\end{array}$$ 
From the Poincar\'e inequality, there exists $C_\omega > 0$ such that
\begin{equation}\label{Norme W2p psi 1}
\|\psi_1\|_{W_0^{2,p}(0,\omega)} \leqslant C_\omega \|\psi_1''\|_{L^p(0,\omega)} \leqslant \frac{3 C_\omega (M_2 + M_3)}{-\lambda} \|F\|_X.
\end{equation}
Now, we focus ourselves on $\|\psi_2\|_{L^p(0,\omega)}$. As previously, we obtain
\begin{equation*}
\begin{array}{lll}
\|\psi_2\|_{L^p(0,\omega)} & \leqslant & \dis |\lambda|  (|\beta_1 + \beta_2| + |\beta_3 + \beta_4|) \left(\int_0^\omega e^{-p\theta\sqrt{-\lambda}}~d\theta\right)^{1/p}\\ \ecart
&& \dis + |\lambda|  (|\beta_1 + \beta_2| + |\beta_3 + \beta_4|) \left(\int_0^\omega e^{-p(\omega-\theta)\sqrt{-\lambda}}~d\theta\right)^{1/p} \\ \ecart
&& \dis + |\lambda| (|\beta_2| + |\beta_4|) \left(\int_0^\omega \left|e^{-\theta\alpha_1} - e^{-\theta\alpha_2}\right|^p~d\theta\right)^{1/p} \\ \ecart
&&\dis + |\lambda| (|\beta_4| + |\beta_2|) \left(\int_0^\omega \left|e^{-(\omega-\theta)\alpha_1} - e^{-(\omega-\theta)\alpha_2}\right|^p~d\theta\right)^{1/p} + \left\|\lambda S + F_1\right\|_{L^p(0,\omega)} \\ \\

& \leqslant & \dis \frac{M_2 + M_3}{-\lambda}\left(\|F_2\|_{L^p(0,\omega)} + 2 \|F_1\|_{L^p(0,\omega)} + 3 \|F_1''\|_{L^p(0,\omega)}\right) \\ \ecart
&& \dis + \left\|\lambda S + F_1\right\|_{L^p(0,\omega)}.
\end{array}
\end{equation*}
Moreover, from \eqref{lambda Sp + F1} and \refL{Lem estim J, I, v}, we deduce that
\begin{equation*}
\begin{array}{rcl}
\left\|\lambda S + F_1\right\|_{L^p(0,\omega)} & \leqslant & \dis \frac{-\lambda \left(|J(0)| + |J(\omega)|\right)}{2 \sqrt{1-\lambda}\, \left(1 - e^{- 2\omega \sqrt{\varepsilon_0}} \right)} \,\left(\int_0^\omega e^{-p\theta \sqrt{-\lambda}} ~d\theta\right)^{1/p} \\ \ecart 
&& \dis + \frac{-\lambda \left(|J(0)| + |J(\omega)|\right)}{2 \sqrt{1-\lambda}\, \left(1 - e^{- 2\omega \sqrt{\varepsilon_0}} \right)} \,\left(\int_0^\omega e^{-p(\omega-\theta) \sqrt{-\lambda}} ~d\theta\right)^{1/p} \\ \ecart
&& \dis + \left|1 - \frac{\lambda^2}{\alpha_1^2\alpha_2^2}\right|\, \|F_1\|_{L^p(0,\omega)} + \frac{-\lambda}{2\sqrt{1-\lambda}}\, \|J\|_{L^p(0,\omega)} \\ \\

& \leqslant & \dis \frac{-\lambda \left(|J(0)| + |J(\omega)|\right)}{\sqrt{-\lambda}^{1+1/p} \left(1 - e^{- 2\omega \sqrt{\varepsilon_0}} \right)} + \left|1 - \frac{\lambda^2}{(1-\lambda)^2}\right|\, \|F_1\|_{L^p(0,\omega)} +  \|v\|_{L^p(0,\omega)} \\ \\

& \leqslant & \dis \frac{-2\lambda}{\sqrt{-\lambda}^{2}\, \left(1 - e^{- 2\omega \sqrt{\varepsilon_0}} \right)} \, \|v\|_{L^p(0,\omega)} + \frac{1-2\lambda}{(1-\lambda)^2}\, \|F_1\|_{L^p(0,\omega)} + \|v\|_{L^p(0,\omega)} \\ \\

& \leqslant & \dis \left(\frac{2}{1 - e^{- 2\omega \sqrt{\varepsilon_0}}} +1 \right)\, \|v\|_{L^p(0,\omega)} + \left(\frac{1}{\lambda^2}+ \frac{1}{-\lambda}\right)\, \|F_1\|_{L^p(0,\omega)} \\ \\

& \leqslant & \dis \left(\frac{2}{1 - e^{- 2\omega \sqrt{\varepsilon_0}}} +1 \right)\, \|v\|_{L^p(0,\omega)} + \frac{\frac{1}{\varepsilon_0}+ 1}{-\lambda} \|F_1\|_{L^p(0,\omega)}.
\end{array}
\end{equation*}
Then, from \refL{Lem estim J, I, v}, we obtain
$$\left\|\lambda S + F_1\right\|_{L^p(0,\omega)} \leqslant \frac{M_4}{-\lambda} \left(\|F_2\|_{L^p(0,\omega)} + 2 \|F_1\|_{L^p(0,\omega)} + 3 \|F_1''\|_{L^p(0,\omega)}\right),$$
where $M_4 =  \left(\frac{2}{1 - e^{- 2\omega \sqrt{\varepsilon_0}}} + 1 \right) M_1 + \frac{1}{\varepsilon_0} + 1$.

Thus, it follows that
$$\begin{array}{lll}
\|\psi_2\|_{L^p(0,\omega)} & \leqslant & \dis \frac{M_2+M_3+M_4}{-\lambda} \left(\|F_2\|_{L^p(0,\omega)} + 2 \|F_1\|_{L^p(0,\omega)} + 3 \|F_1''\|_{L^p(0,\omega)}\right) \\ \ecart
& \leqslant & \dis\frac{3\left(M_2+M_3+M_4\right)}{-\lambda} \|F\|_X.
\end{array}$$
Finally, from \eqref{Norme W2p psi 1}, we have
$$\|(\A-\lambda I)^{-1} F\|_X = \|\psi_1\|_{W^{2,p}_0(0,\omega)} + \|\psi_2\|_{L^p(0,\omega)} \leqslant \frac{M}{|\lambda|} \|F\|_X,$$
where $M = 3 \left((C_\omega + 1)(M_2+M_3) + M_4\right)$. 
\end{proof}
Since $-\A$ is the realization of $\L_2$, we deduce the following corollary. 
\begin{Cor}\label{Cor L2}
There exist $\varepsilon_{\L_2} \in (0, \pi)$ small enough and $M_{\L_2} > 0$ such that
$$\forall ~z \in \Sigma_{\L_2} := \overline{B(0,\varepsilon_0)} \cup \{z \in \CC\setminus\{0\} : |\arg(z)| \leqslant \varepsilon_{\L_2}\},$$
we have
$$\left\|\left(\L_2 - z I\right)^{-1}\right\|_{\L(X)} \leqslant \frac{M_{\L_2}}{1+|z|}.$$
Therefore, assumption $(H_1)$ in \refS{sect M1 M2} is verified for $\L_2$ with
\begin{equation}\label{theta L2}
\theta_{\L_2} = \pi - \varepsilon_{\L_2}.
\end{equation}
\end{Cor}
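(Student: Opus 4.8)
The plan is to transport everything to the operator $\A$ via the realization identity $\L_2=-\A$, which gives
$$(\L_2-zI)^{-1}=-(\A+zI)^{-1}=-(\A-(-z)I)^{-1}, \qquad \text{hence}\qquad \|(\L_2-zI)^{-1}\|_{\L(X)}=\|(\A-(-z)I)^{-1}\|_{\L(X)}.$$
Writing $\mu:=-z$, the set $\Sigma_{\L_2}$ is sent to $\overline{B(0,\varepsilon_0)}\cup\{\mu\neq0:|\arg(\mu)-\pi|\leqslant\varepsilon_{\L_2}\}$, i.e.\ the closed ball together with a small sector around the negative real axis. So it suffices to establish $\|(\A-\mu I)^{-1}\|_{\L(X)}\leqslant M_{\L_2}/(1+|\mu|)$ on this set, and \refP{Prop estim resolvante A} already provides exactly this bound along the ray $\mu\leqslant0$.

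The heart of the argument is to widen the estimate from the ray $\RR_-$ to a genuine sector by an analytic perturbation (Neumann series) argument. For $\mu\neq0$ in the sector I would project to the nearest point of $\RR_-$ at the same modulus, $\lambda:=-|\mu|\leqslant0$, and write $\A-\mu I=(\A-\lambda I)\bigl[I-(\mu-\lambda)(\A-\lambda I)^{-1}\bigr]$. The key elementary estimate is
$$|\mu-\lambda|=|\mu|\,\bigl|1-e^{i(\arg\mu-\pi)}\bigr|=2|\mu|\,\Bigl|\sin\tfrac{\arg\mu-\pi}{2}\Bigr|\leqslant\varepsilon_{\L_2}\,|\mu|,$$
so that, by \refP{Prop estim resolvante A}, $|\mu-\lambda|\,\|(\A-\lambda I)^{-1}\|\leqslant\varepsilon_{\L_2}|\mu|\cdot\tfrac{M}{1+|\mu|}<\varepsilon_{\L_2}M$. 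Choosing $\varepsilon_{\L_2}$ small enough that $\varepsilon_{\L_2}M\leqslant\tfrac12$ makes the bracket invertible with norm $\leqslant2$, whence $\mu\in\rho(\A)$ and $\|(\A-\mu I)^{-1}\|\leqslant 2M/(1+|\mu|)$. The decisive point is that the factor $|\mu|/(1+|\mu|)<1$ renders this smallness condition \emph{uniform} in $|\mu|$: a single $\varepsilon_{\L_2}$ works for all moduli, so the sector has a fixed opening. I expect this uniform-smallness verification to be the only genuinely non-routine step.

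It remains to treat the ball and to collect the constants. By \refP{Prop A inversible}, $\overline{B(0,\varepsilon_0)}\subset\rho(\A)$; since $\mu\mapsto(\A-\mu I)^{-1}$ is continuous on the resolvent set, it is bounded on this compact ball by some $M_0$, and because $1+|\mu|\leqslant1+\varepsilon_0$ there, one gets $\|(\A-\mu I)^{-1}\|\leqslant M_0(1+\varepsilon_0)/(1+|\mu|)$. Taking $M_{\L_2}:=\max\bigl(2M,\,M_0(1+\varepsilon_0)\bigr)$ then yields the stated estimate on all of $\Sigma_{\L_2}$. Finally, to read off $(H_1)$: with $\theta_{\L_2}=\pi-\varepsilon_{\L_2}$ one has $\pi-\theta_{\L_2}=\varepsilon_{\L_2}$, so the open sector $\{z\neq0:|\arg(z)|<\pi-\theta_{\L_2}\}$ lies inside $\Sigma_{\L_2}$, and $M_{\L_2}/(1+|z|)\leqslant M_{\L_2}/|z|$ delivers the required $C/|z|$ bound (for any $R>0$), so that $(H_1)$ holds with $\theta_{\L_2}=\pi-\varepsilon_{\L_2}$.
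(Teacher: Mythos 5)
Your proof is correct and is essentially the paper's own (implicit) argument: the paper states this corollary as a direct consequence of \refP{Prop estim resolvante A} and \refP{Prop A inversible} via the realization identity $\L_2=-\A$, leaving unwritten exactly the two routine steps you supply, namely the Neumann-series widening of the $\RR_-$ estimate to a uniform small sector and the compactness bound on $\overline{B(0,\varepsilon_0)}$. Nothing further is needed.
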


\begin{Rem}\label{Rem valeurs propres L2}
$\A$ is anti-compact; since $\sigma(-\L_2) = \sigma(\A)$ then $\sigma(-\L_2)$ is uniquely composed by isolated eigenvalues $\left( \lambda _{j}\right) _{j\geqslant 1}$ such that $\left\vert \lambda _{j}\right\vert \rightarrow +\infty$, see Kato \cite{kato}, Theorem 6.29, p. 187. More precisely, the calculus of the resolvent operator $\left(\A - \lambda I\right)^{-1}$ requires that, for all $\lambda \in \CC \setminus \RR_+$, $U_1$ and $U_2$ defined by \eqref{U-V M} do not vanish. Since $U_1U_2 = 0$ is equivalent to
$$\left(\sinh(\omega \sqrt{-\lambda}) - \omega \sqrt{-\lambda}\right)\left( \sinh(\omega \sqrt{-\lambda}) + \omega \sqrt{-\lambda}\right) = 0,$$
then, using $(z_j)_{j\geqslant 1}$ defined in \refS{Sect Intro}, we deduce that
$$\forall\, j \geqslant 1, \quad \lambda_j = -\frac{z_j^2}{\omega^2} \in \CC \setminus \RR_+.$$
\end{Rem}

Now, we prove that operator $\A$ has Bounded Imaginary Powers, see \refD{Def BIP}.
\begin{Prop}\label{Prop A BIP}
$\A \in$ BIP\,$(X,\theta_{\A})$, for any $\theta_{\A} \in (0, \pi)$.
\end{Prop}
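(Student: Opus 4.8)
To verify \refD{Def BIP} for $\A$, the first requirement is almost immediate: $D(\A)$ is dense in $X$ (as established in the proof of \refP{Prop estim resolvante A}), while \refP{Prop A inversible} shows that $\A$ is boundedly invertible, hence injective and with $R(\A) = X$; in particular $\overline{D(\A)} = \overline{R(\A)} = X$. Sectoriality is also available: by a Neumann series, the bound $\|(\A - \lambda I)^{-1}\|_{\L(X)} \leq M/(1+|\lambda|)$ of \refP{Prop estim resolvante A}, proved on $\RR_-$, extends to a sector around $\RR_-$, so that, together with \refC{Cor L2} and $0 \in \rho(\A)$, $\A$ is a sectorial injective operator. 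All the content of the statement is thus concentrated in condition $iii)$: for \emph{every} $\theta_\A \in (0,\pi)$ one must exhibit $C_{\theta_\A} \geq 1$ with $\|\A^{is}\|_{\L(X)} \leq C_{\theta_\A}\,e^{|s|\theta_\A}$ for all $s \in \RR$.

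The sectoriality angle produced above is close to $\pi$, so the exponential estimate cannot follow from sectoriality alone and has to exploit the structure of $\A$. The guiding idea is algebraic: writing $\Lambda := -\partial^2/\partial\theta^2$, one has
\[
\A = \left(\begin{array}{cc} 0 & 1 \\ -(\Lambda - 1)^2 & 2(\Lambda + 1)\end{array}\right),
\]
whose characteristic symbol factors as $\mu^2 - 2(\Lambda+1)\mu + (\Lambda-1)^2 = \left(\mu - (\sqrt{\Lambda}-1)^2\right)\left(\mu - (\sqrt{\Lambda}+1)^2\right)$, so that $\A$ is, at least formally, conjugate through a Vandermonde change of unknown to $\mathrm{diag}\big((\sqrt{\Lambda}-1)^2,(\sqrt{\Lambda}+1)^2\big)$. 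Since $X = W^{2,p}_0(0,\omega)\times L^p(0,\omega)$ is a UMD space for $p>1$, and the two scalar second-order factors fall into the class of operators on $L^p$ known to have bounded imaginary powers of arbitrarily small angle, this heuristic explains why one can hope for \emph{any} $\theta_\A \in (0,\pi)$. To turn it into quantitative estimates I would realise the powers through the Dunford integral
\[
\A^{is} = \frac{1}{2i\pi}\int_{\Gamma}\lambda^{is}\,(\A - \lambda I)^{-1}\,d\lambda,
\]
feed in the explicit resolvent of \refP{Prop sol psi 1}, and then draw $\Gamma$ inside a sector $|\arg\lambda| \leq \theta_\A$ as narrow as the prescribed angle requires.

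The main obstacle is that the reduction above is only formal: the Vandermonde conjugation involves $\sqrt{\Lambda}$ and is unbounded on $X$, and the clamped boundary conditions $\psi_1(0)=\psi_1(\omega)=\psi_1'(0)=\psi_1'(\omega)=0$ keep the two scalar factors coupled. This coupling is exactly what generates the genuinely complex F\"adle eigenvalues $\lambda_j = -z_j^2/\omega^2$ of \refR{Rem valeurs propres L2}, which sit off the positive real axis. Hence, once $\Gamma$ is squeezed into the sector $|\arg\lambda|\leq\theta_\A$, it must be completed by small loops encircling the finitely many eigenvalues with $|\arg(\lambda_j)| > \theta_\A$, and the crux of the argument is to estimate these spectral contributions directly from the kernel representation — using the bounds already assembled in \refL{Lem estim J, I, v}, \refL{Lem estim norm Lp difference exp} and \refL{Lem estimation beta i} — and to sum them over $j$ so that the operator norm of $\A^{is}$ still satisfies the required $C_{\theta_\A}\,e^{|s|\theta_\A}$ growth uniformly in $s$. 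Securing this uniform control, far more than the algebraic reduction, is where essentially all the difficulty lies.
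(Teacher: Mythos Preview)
Your proposal is a strategic outline rather than a proof, and the route you sketch is quite different from the paper's. The paper never writes down a Dunford contour for $\A^{is}$. Instead it uses the Balakrishnan-type real-axis representation
\[
(\A+I)^{-\varepsilon+ir}F \;=\; \frac{1}{\Gamma(1-\varepsilon+ir)\Gamma(\varepsilon-ir)}\int_0^{+\infty}\lambda^{-\varepsilon+ir}\,(\A+I+\lambda I)^{-1}F\,d\lambda,
\]
feeds in the explicit resolvent kernel of \refP{Prop sol psi 1}, and --- invoking Seeley's principle that only the convolution part of the kernel governs the complex powers --- computes the Fourier transform of that convolution term. This yields an explicit scalar multiplier $m(\xi)=(\lambda_2^{ir}-\lambda_1^{ir})/(8\pi\xi)$, shown to satisfy $\sup|m|+\sup|\xi m'|\leqslant C|r|$; the Mihlin multiplier theorem then gives $\|(\A+I)^{ir}\|\leqslant C_\gamma e^{\gamma|r|}$ for every $\gamma>0$, and a perturbation result of Arendt--Bu--Haase removes the shift by $I$. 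The integration path sits inside $(-\infty,-1)\subset\rho(\A)$ throughout, so the F\"adle eigenvalues never enter the analysis.

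Your contour-deformation plan, by contrast, runs into a concrete obstruction precisely at the step you flag as the crux. If $\lambda_j$ is an eigenvalue of $\A$ with $|\arg\lambda_j|>\theta_\A$ and $P_j$ its Riesz projection, the small loop around $\lambda_j$ contributes essentially $\lambda_j^{is}P_j$, and $|\lambda_j^{is}|=e^{-s\arg\lambda_j}$. For the appropriate sign of $s$ this equals $e^{|s|\,|\arg\lambda_j|}>e^{|s|\theta_\A}$, so a term-by-term estimate of ``sector contour plus loops'' can never produce the target bound $Ce^{|s|\theta_\A}$: any argument along these lines would have to exhibit a \emph{cancellation} between the sector integral and the loop terms, not merely sum their norms, and the kernel estimates of Lemmas~\ref{Lem estim J, I, v}--\ref{Lem estimation beta i} that you invoke are purely absolute-value bounds which cannot detect such cancellation. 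A secondary technical point is that the Dunford integral $\frac{1}{2i\pi}\int_\Gamma\lambda^{is}(\A-\lambda I)^{-1}\,d\lambda$ does not converge absolutely for a sectorial operator (the integrand is only $O(|\lambda|^{-1})$ along rays); one needs a regularisation, as the paper's $\varepsilon\to0$ device provides.
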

\begin{proof}
We will be inspired by the method used in Labbas and Moussaoui \cite{labbas-moussaoui} or in Labbas and Sadallah \cite{labbas-sadallah}. 

Let $\varepsilon >0$ and $r \in \RR$. For all $\lambda > 0$, $F_1 \in W^{2,p}_0(0,\omega)$ and $F_2 \in L^p(0,\omega)$, we have 
$$\begin{array}{lll}
\dis\left[\left(\A + I\right)^{-\varepsilon + ir} \left(\begin{array}{c}
F_1 \\
F_2
\end{array} \right)\right](\theta) & = & \dis \frac{1}{\Gamma_{\varepsilon,r}}\int_0^{+\infty} \lambda^{-\varepsilon+ir}\left[\left(\A + I + \lambda I\right)^{-1}\left(\begin{array}{c}
F_1 \\
F_2
\end{array} \right)\right](\theta)\,d\lambda \\ \\
& = & \dis \frac{1}{\Gamma_{\varepsilon,r}}\int_0^{+\infty} \lambda^{-\varepsilon+ir}\left(\begin{array}{c}
\psi_1 \\
\psi_2
\end{array} \right)(\theta)\,d\lambda
\end{array}$$
where $\Gamma_{\varepsilon,r} = \Gamma(1-\varepsilon + ir)\Gamma(\varepsilon-ir)$, see for instance Triebel \cite{triebel}, $(6)$, p. 100.

Now, let us focus ourselves on the first component $\psi_1$ for instance. Due to Seeley \cite{seeley}, we only consider the convolution term in $\psi_1$, which is the most singular term. In our case, this term is given by  
$$\begin{array}{lll}
I_{S}(\theta) & = & \dis\frac{1}{4\alpha_1\alpha_2}\int_0^\omega e^{-|\theta-s|\alpha_2}\int_0^\omega e^{-|s-t|\alpha_1} G_{\lambda+1}(t)~ dtds \\ \ecart

& = & \dis \frac{1}{4\alpha_1\alpha_2}\int_0^\omega e^{-|\theta-s|\alpha_2}\int_0^\omega e^{-|s-t|\alpha_1} \left(-F_2 - 2 (F''_1 - F_1) + (\lambda + 1) F_1\right)(t)~ dtds,
\end{array}$$
see \eqref{Sp M} and \eqref{v0 M}. We will use the two following extensions
$$\widetilde{G_0}(\theta) = \left\{\begin{array}{ll}
-F_2(\theta) - 2F_1''(\theta) + 2F_1(\theta), &\text{if } \theta \in [0,\omega] \\
0,    &\text{else},
\end{array}\right. \quad \text{with} \quad \widetilde{F_1}(\theta) = \left\{\begin{array}{ll}
F_1(\theta), &\text{if } \theta \in [0,\omega] \\
0,    &\text{else},
\end{array}\right.$$
and 
$$E_\alpha (\theta) = e^{-|\theta| \alpha}.$$
Now, we will use the Fourier transform denoted by $\F_t$. We then have
$$\begin{array}{lll}
I_{\varepsilon,r} (\theta) & := & \dis \frac{1}{\Gamma_{\varepsilon,r}}\int_0^{+\infty} \lambda^{-\varepsilon + ir} I_S (\theta)~ d\lambda \\ \\

& = & \dis \frac{1}{\Gamma_{\varepsilon,r}}\int_0^{+\infty} \frac{\lambda^{-\varepsilon + ir}}{4\alpha_1\alpha_2} \left(E_{\alpha_2} \star \left(E_{\alpha_1}\star \left(\widetilde{G_0}+(\lambda+1)\widetilde{F_1}\right)\right)\right)(\theta)~ d\lambda \\ \\

& = & \dis \frac{1}{\Gamma_{\varepsilon,r}}\int_0^{+\infty} \frac{\lambda^{-\varepsilon + ir}}{4\alpha_1\alpha_2} \F_t^{-1}\left(\F_t\left(E_{\alpha_2} \star \left(E_{\alpha_1}\star \left(\widetilde{G_0}+(\lambda+1)\widetilde{F_1}\right)\right)\right)(\xi) \right)(\theta)~ d\lambda, 
\end{array}$$
hence
$$\begin{array}{lll}
I_{\varepsilon,r} (\theta) & = & \dis \F_t^{-1}\left(\frac{1}{\Gamma_{\varepsilon,r}}\int_0^{+\infty} \frac{\lambda^{-\varepsilon + ir}}{4\alpha_1\alpha_2} \F_t\left(E_{\alpha_2}\right)(\xi) \F_t\left(E_{\alpha_1}\right)(\xi) \F_t\left(\widetilde{G_0}+(\lambda+1)\widetilde{F_1}\right)(\xi)~ d\lambda\right)(\theta) \\ \\

& = & \dis \F_t^{-1}\left(\frac{1}{\Gamma_{\varepsilon,r}}\int_0^{+\infty} \frac{\lambda^{-\varepsilon + ir}}{4\alpha_1\alpha_2} \F_t\left(E_{\alpha_2}\right)(\xi) \F_t\left(E_{\alpha_1}\right)(\xi) ~ d\lambda \, \F_t\left(\widetilde{G_0}\right)(\xi)\right)(\theta) \\ \ecart
&& \dis + \F_t^{-1}\left(\frac{1}{\Gamma_{\varepsilon,r}}\int_0^{+\infty} \frac{\lambda^{-\varepsilon + ir}(\lambda+1)}{4\alpha_1\alpha_2} \F_t\left(E_{\alpha_2}\right)(\xi) \F_t\left(E_{\alpha_1}\right)(\xi)~ d\lambda \, \F_t\left(\widetilde{F_1}\right)(\xi)\right)(\theta).
\end{array}$$
We recall that 
$$\F_t\left(E_\alpha\right)(\xi) = \frac{2\alpha}{\alpha^2 + 4 \pi^2\xi^2},$$ 
here $\alpha_1 = \sqrt{\lambda + 1} + i$ and $\alpha_2 = \sqrt{\lambda + 1} - i$. Hence
$$\begin{array}{lll}
\dis \frac{\lambda^{-\varepsilon + ir}}{4\alpha_1\alpha_2} \F_t\left(E_{\alpha_2}\right)(\xi) \F_t\left(E_{\alpha_1}\right)(\xi) & = & \dis \frac{\lambda^{-\varepsilon + ir}}{4\alpha_1\alpha_2} \frac{4 \alpha_1 \alpha_2}{(\alpha_1^2 + 4\pi^2\xi^2)(\alpha_2^2 + 4\pi^2\xi^2)} \\ \ecart
& = & \dis\frac{\lambda^{-\varepsilon + ir}}{\alpha_1^2\alpha_2^2 + 4\pi^2\xi^2(\alpha_1^2 + \alpha_2^2) + 16\pi^4\xi^4} \\ \ecart

& = & \dis\frac{\lambda^{-\varepsilon + ir}}{\lambda^2 + 4(1+2\pi^2\xi^2)\lambda + 4(1 + 4 \pi^4\xi^4)} \\ \ecart

& = & \dis\frac{\lambda^{-\varepsilon + ir}}{(\lambda + \lambda_1)(\lambda + \lambda_2)},
\end{array}$$
where
$$\left\{\begin{array}{l}
\dis\lambda_1 = 2 + 4\pi\xi + 4 \pi^2\xi^2 = 4\pi^2\left(\xi - \frac{(1+i)}{2\pi}\right) \left(\xi - \frac{(1-i)}{2\pi}\right) \\ \ecart

\dis\lambda_2 = 2 - 4\pi\xi + 4 \pi^2\xi^2 = 4\pi^2\left(\xi + \frac{(1+i)}{2\pi}\right) \left(\xi + \frac{(1-i)}{2\pi}\right).
\end{array}\right.$$
Thus, since
$$\frac{1}{(\lambda + \lambda_1)(\lambda + \lambda_2)} = \frac{1}{\lambda_1 - \lambda_2} \left(- \frac{1}{\lambda + \lambda_1} + \frac{1}{\lambda + \lambda_2}\right),$$
it follows that
$$\frac{\lambda^{-\varepsilon + ir}}{4\alpha_1\alpha_2} \F_t\left(E_{\alpha_2}\right)(\xi) \F_t\left(E_{\alpha_1}\right)(\xi) = \frac{1}{8\pi\xi} \left(- \frac{\lambda^{-\varepsilon + ir}}{\lambda + \lambda_1} + \frac{\lambda^{-\varepsilon + ir}}{\lambda + \lambda_2}\right).$$
Then, setting 
$$\sigma_1 = \frac{\lambda}{\lambda_1} \quad \text{and} \quad \sigma_2 = \frac{\lambda}{\lambda_2},$$
we obtain
$$\begin{array}{lll}
\dis\int_0^{+\infty} \frac{\lambda^{-\varepsilon + ir}}{4\alpha_1\alpha_2} \F_t\left(E_{\alpha_2}\right)(\xi) \F_t\left(E_{\alpha_1}\right)(\xi)~ d\lambda & = & \dis \frac{1}{8\pi\xi}\left(-\int_0^{+\infty}  \frac{\lambda^{-\varepsilon + ir}}{\lambda + \lambda_1} ~d\lambda + \int_0^{+\infty}  \frac{\lambda^{-\varepsilon + ir}}{\lambda + \lambda_2} ~d\lambda \right) \\ \\

& = & \dis -\frac{\lambda_1^{-\varepsilon + ir}}{8\pi\xi} \int_0^{+\infty}  \frac{\sigma_1^{-\varepsilon + ir}}{\sigma_1 + 1} ~d\sigma_1 \\ \ecart
&& \dis + \frac{\lambda_2^{-\varepsilon + ir}}{8\pi\xi}\int_0^{+\infty}  \frac{\sigma_2^{-\varepsilon + ir}}{\sigma_2 + 1} ~d\sigma_2. 
\end{array}$$
Moreover, for all $z \in \CC \setminus \NN^-$, where $\NN^-$ is the set of negative integer, we have
\begin{equation}\label{int=Gamma}
\int_0^{+\infty}  \frac{\sigma^{-z}}{\sigma + 1} ~d\sigma = \Gamma(z)\Gamma(1-z).
\end{equation}
It follows that
\begin{equation}\label{int=Gamma2}
\int_0^{+\infty}  \frac{\sigma^{-\varepsilon + ir}}{\sigma + 1} ~d\sigma = \Gamma(\varepsilon - ir)\Gamma(1-\varepsilon + ir) = \Gamma_{\varepsilon,r},
\end{equation}
hence
$$\begin{array}{lll}
\dis\frac{1}{\Gamma_{\varepsilon,r}}\int_0^{+\infty} \frac{\lambda^{-\varepsilon + ir}}{4\alpha_1\alpha_2} \F_t\left(E_{\alpha_2}\right)(\xi) \F_t\left(E_{\alpha_1}\right)(\xi)~ d\lambda & = & \dis\frac{1}{8\pi\xi \,\Gamma_{\varepsilon,r}}\left(\lambda_2^{-\varepsilon + ir} - \lambda_1^{-\varepsilon + ir}\right) \Gamma_{\varepsilon,r} \\ \ecart

& = & \dis\frac{1}{8\pi\xi}\left(\lambda_2^{-\varepsilon + ir} - \lambda_1^{-\varepsilon + ir}\right).
\end{array}$$
In the same way, we have
$$\begin{array}{lll}
\dis \frac{\lambda^{-\varepsilon + ir}(\lambda+1)}{4\alpha_1\alpha_2} \F_t\left(E_{\alpha_2}\right)(\xi) \F_t\left(E_{\alpha_1}\right)(\xi) & = & \dis \frac{1}{\lambda_1 - \lambda_2} \left(-\frac{(\lambda + 1)\lambda^{-\varepsilon+ir}}{\lambda + \lambda_1} + \frac{(\lambda + 1)\lambda^{-\varepsilon+ir}}{\lambda + \lambda_2}\right) \\ \\

& = & \dis  \frac{1}{8\pi \xi} \left(-\frac{\lambda^{1-\varepsilon+ir}}{\lambda + \lambda_1} + \frac{\lambda^{1-\varepsilon+ir}}{\lambda + \lambda_2}\right) \\ \ecart
&&\dis  + \frac{1}{8\pi\xi} \left(-\frac{\lambda^{-\varepsilon+ir}}{\lambda + \lambda_1} + \frac{\lambda^{-\varepsilon+ir}}{\lambda + \lambda_2}\right).
\end{array}$$
Then, setting 
$$\sigma_1 = \frac{\lambda}{\lambda_1} \quad \text{and} \quad \sigma_2 = \frac{\lambda}{\lambda_2},$$
we obtain
$$\begin{array}{lll}
\Upsilon & = & \dis\int_0^{+\infty} \frac{\lambda^{-\varepsilon + ir}(\lambda+1)}{4\alpha_1\alpha_2} \F_t\left(E_{\alpha_2}\right)(\xi) \F_t\left(E_{\alpha_1}\right)(\xi)~ d\lambda \\ \\

& = & \dis -\frac{1}{8\pi\xi}\int_0^{+\infty}  \frac{\lambda^{-\varepsilon + ir}}{\lambda + \lambda_1} ~d\lambda + \frac{1}{8\pi\xi}\int_0^{+\infty}  \frac{\lambda^{-\varepsilon + ir}}{\lambda + \lambda_2} ~d\lambda \\ \ecart
&& \dis - \frac{1}{8\pi\xi}\int_0^{+\infty}  \frac{\lambda^{1-\varepsilon + ir}}{\lambda + \lambda_1} ~d\lambda  + \frac{1}{8\pi\xi}\int_0^{+\infty}  \frac{\lambda^{1-\varepsilon + ir}}{\lambda + \lambda_2} ~d\lambda \\ \\

& = & \dis -\frac{\lambda_1^{-\varepsilon + ir}}{8\pi\xi} \int_0^{+\infty}  \frac{\sigma_1^{-\varepsilon + ir}}{\sigma_1 + 1} ~d\sigma_1 + \frac{\lambda_2^{-\varepsilon + ir}}{8\pi\xi}\int_0^{+\infty}  \frac{\sigma_2^{-\varepsilon + ir}}{\sigma_2 + 1} ~d\sigma_2 \\ \ecart
&& \dis -\frac{\lambda_1^{1-\varepsilon + ir}}{8\pi\xi} \int_0^{+\infty}  \frac{\sigma_1^{1-\varepsilon + ir}}{\sigma_1 + 1} ~d\sigma_1 + \frac{\lambda_2^{1-\varepsilon + ir}}{8\pi\xi}\int_0^{+\infty}  \frac{\sigma_2^{1-\varepsilon + ir}}{\sigma_2 + 1} ~d\sigma_2.
\end{array}$$
Moreover, from \eqref{int=Gamma} and \eqref{int=Gamma2}, we deduce that
$$\begin{array}{lll}
\Upsilon & = & \dis \left(\frac{\lambda_2^{-\varepsilon + ir} - \lambda_1^{-\varepsilon + ir}}{8\pi\xi}\right)\Gamma_{\varepsilon,r} + \left(\frac{\lambda_2^{-\varepsilon + ir} - \lambda_1^{-\varepsilon + ir}}{8\pi\xi}\right) \Gamma(\varepsilon - ir - 1)\Gamma(1-(\varepsilon - ir -1)) \\ \\

& = & \dis \left(\frac{\lambda_2^{-\varepsilon + ir} - \lambda_1^{-\varepsilon + ir}}{2\pi\xi} \right) \left(\Gamma_{\varepsilon,r} + \Gamma(\varepsilon - ir - 1)\Gamma(1-(\varepsilon - ir -1))\right).
\end{array}$$
For all $z \in \CC \setminus \ZZ$, we have
$$\Gamma(z - 1)\Gamma(1-(z - 1)) = \frac{\pi}{\sin(\pi(z-1))} = -\frac{\pi}{\sin(\pi z)} = -\Gamma(z)\Gamma(1-z),$$
Setting $z=\varepsilon - ir$, with $\varepsilon \in (0,1)$, it follows that
$$\Gamma(\varepsilon - ir - 1)\Gamma(1-(\varepsilon - ir -1) = - \Gamma(\varepsilon - ir)\Gamma(1 - \varepsilon + ir) = - \Gamma_{\varepsilon,r},$$
hence $\Upsilon = 0$. Finally, we obtain that
$$I_{\varepsilon,r} (\theta) = \F_t^{-1}\left(m_\varepsilon (\xi)\F_t\left(\widetilde{G_0}\right)(\xi)\right)(\theta),$$
where
$$m_\varepsilon (\xi) = \frac{\lambda_2^{-\varepsilon + ir} - \lambda_1^{-\varepsilon + ir}}{8\pi\xi}.$$
Setting
$$m(\xi) := \lim_{\varepsilon \to 0} m_\varepsilon (\xi)  = \frac{\lambda_2^{ir} - \lambda_1^{ir}}{8\pi\xi},$$
due to the Lebesgue's dominated convergence Theorem, it follows that
$$I_{0,r} (\theta) := \lim_{\varepsilon \to 0} I_{\varepsilon,r} (\theta)  = \F_t^{-1}\left(m (\xi)\F_t\left(\widetilde{G_0}\right)(\xi)\right)(\theta).$$
Moreover, for all $x_1,x_2 \in \RR$, we have 
$$\left|e^{ix_1} - e^{ix_2}\right| \leqslant |x_1 - x_2|,$$
then, for all $\xi \in \RR \setminus\{0\}$, we deduce that
$$|m(\xi)| = \frac{\left|\lambda_2^{ir} - \lambda_1^{i r}\right|}{8\pi |\xi|} = \frac{\left|e^{ir\ln(\lambda_2)} - e^{i r \ln(\lambda_1)}\right|}{8\pi |\xi|} \leqslant \frac{|r| \left|\ln(\lambda_2) - \ln(\lambda_1)\right|}{8\pi|\xi|} \leqslant \frac{|r|\left|\ln\left(\frac{2+\sqrt{2}}{2-\sqrt{2}}\right)\right|}{8\pi\xi}.$$ 
Thus
$$\sup_{\xi \in \RR}\left|m (\xi) \right| = \lim_{\xi \to 0} \left|m (\xi) \right| = \left| \lim_{\xi \to 0} \frac{\lambda_2^{ir} - \lambda_1^{ir}}{8\pi\xi}\right|,$$
and 
$$\begin{array}{lll}
\dis\lim_{\xi \to 0} \frac{\lambda_2^{ir} - \lambda_1^{ir}}{8\pi\xi} & = & \dis \lim_{\xi \to 0} 2^{ir}\left(\frac{1 + ir\left(-2 \pi\xi + 2 \pi^2 \xi^2\right) + 2 ir(ir - 1)\pi^2\xi^2 + o(\xi^2)}{8\pi\xi}\right) \\ \ecart

&& \dis -  \lim_{\xi \to 0} 2^{ir}\left(\frac{1 + ir\left(2\pi\xi + 2 \pi^2\xi^2\right) + 2 ir(ir - 1)\pi^2\xi^2 + o(\xi^2)}{8\pi\xi}\right) \\ \\

& = & \dis \lim_{\xi \to 0} 2^{ir}\left( \frac{-4 ir\pi\xi + o(\xi^2)}{8\pi\xi} \right)\\ \\

& = & \dis -2^{ir-1}ir.
\end{array}$$
Then
$$\sup_{\xi \in \RR}\left|m (\xi) \right| = \frac{|r|}{2}.$$
We have
$$\begin{array}{lll}
\dis\xi \,m'(\xi) & = & \dis \frac{2\pi\xi^2\left(ir \lambda_2^{ir-1} (-4\pi + 8 \pi^2\xi) - ir \lambda_1^{ir - 1}(4\pi + 8\pi^2\xi)\right) - 2\pi \xi \left(\lambda_2^{ir} - \lambda_1^{ir}\right)}{64\pi^2\xi^2} \\ \ecart

& = & \dis \frac{ir}{32}\left( \lambda_2^{ir-1} (-1 + 2 \pi\xi) - \lambda_1^{ir - 1}(1 + 2\pi\xi)\right) - \frac{\lambda_2^{ir} - \lambda_1^{ir}}{32\pi\xi},
\end{array}$$
and in the same way we obtain
$$\sup_{\xi \in \RR}\left|\xi \, m'(\xi) \right| = \lim_{\xi \to 0} \left|\xi \,m'(\xi)\right| =  \left|\lim_{\xi \to 0}\xi \, m'(\xi)\right|,$$
with
$$\lim_{\xi \to 0} \xi \, m'(\xi) = \frac{ir}{32}\lim_{\xi \to 0} \left( \lambda_2^{ir-1} (-1 + 2 \pi\xi) - \lambda_1^{ir - 1}(1 + 2\pi\xi)\right) - \frac{\lambda_2^{ir} - \lambda_1^{ir}}{32\pi\xi},$$
where
$$\lim_{\xi \to 0} \lambda_2^{ir-1} = \lim_{\xi \to 0} 2^{ir-1}\left(1 + (ir - 1)\left(-2 \pi\xi + 2 \pi^2 \xi^2\right) + 4  ir(ir - 1)\pi^2\xi^2 + o(\xi^2)\right) = 2^{ir-1},$$
and
$$\lim_{\xi \to 0} \lambda_1^{ir-1} = \lim_{\xi \to 0} 2^{ir-1}\left(1 + (ir - 1)\left(2 \pi\xi + 2 \pi^2 \xi^2\right) + 4 ir(ir - 1)\pi^2\xi^2 + o(\xi^2)\right) = 2^{ir-1}.$$
Thus
$$\begin{array}{lll}
\dis \lim_{\xi \to 0} \xi \, m'(\xi) & = & \dis \frac{1}{4} \lim_{\xi \to 0} \frac{ir}{8}\left(-\left(\lambda_2^{ir-1} + \lambda_1^{ir - 1}\right) + 2 \pi \xi \left( \lambda_2^{ir-1} - \lambda_1^{ir - 1}\right)\right) - \frac{\lambda_2^{ir} - \lambda_1^{ir}}{8\pi\xi} \\ \ecart

& = & \dis \frac{1}{4}\left(- \frac{2^{ir}ir}{8} + 2^{ir-1}ir\right) \\ \ecart 

& = & \dis 3\times 2^{ir-5}ir.
\end{array}$$
Then
$$\sup_{\xi \in \RR}\left|\xi \,m'(\xi) \right| = \left|3\times 2^{ir-5}ir \right| = \frac{3}{32}|r|.$$
Therefore, we deduce that
$$\sup_{\xi \in \RR}\left|m (\xi) \right| + \sup_{\xi \in \RR}\left|\xi \,m'(\xi) \right| = \frac{|r|}{2} + \frac{3}{32}|r| = \frac{19}{32} |r|,$$
From the Mihlin Theorem, see Mihlin \cite{mihlin}, for all $\gamma > 0$, there exists $C_{\gamma,p} >0$, such that, for all $r \in \RR$, we have
$$\left\|I_{0,r} (.)\right\|_{\L(X)} = \left\|\F_t^{-1}\left(m (\xi) \F_t(\widetilde{G_0})(\xi) \right)(.)\right\|_{\L(X)} \leqslant C_{\gamma,p} e^{\gamma |r|}.$$
Finally, for all $\gamma > 0$, there exists a constant $C_{\gamma,p} > 0$ such that for all $r \in \RR$, we obtain
$$\left\|\left(\A + I\right)^{ir}\right\|_{\L(X)} \leqslant C_{\gamma,p} \,e^{\gamma |r|}.$$
Therefore, taking $\theta_\A = \gamma > 0$, we have $\A + I \in $ BIP\,$(X,\theta_\A)$ and from Theorem 2.3, p. 69 in Arendt, Bu and Haase \cite{arendt-bu-haase}, we deduce that $\A = \A+I-I \in $ BIP\,$(X,\theta_\A)$.
\end{proof}

\section{Study of the sum $\L_{1,\mu}+\L_2$} \label{Sect study of L1+L2}

\subsection{Invertibility of the closure of the sum}\label{sect inv sum}

In this section, we will apply the results described in \refS{sect M1 M2}. We take $\L_{1,\mu} = \M_1$ and $\L_2 = \M_2$. 

\begin{Th}\label{Th L1+L2 barre inversible}
Assume that \eqref{hyp inv sum} holds. Then $\L_{1,\mu} + \L_2$ is closable and its closure $\overline{\L_{1,\mu} + \L_2}$ is invertible. 
\end{Th}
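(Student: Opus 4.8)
The plan is to apply the Da Prato--Grisvard sum theory, namely \refT{Th daprato-grisvard}, with $\M_1 = \L_{1,\mu}$ and $\M_2 = \L_2$ in the space $\E = L^p(0,+\infty;X)$. It therefore suffices to verify the three hypotheses $(H_1)$, $(H_2)$ and $(H_3)$ of \refS{sect M1 M2}; the two conclusions (closability of $\L_{1,\mu}+\L_2$ and bounded invertibility of its closure) then follow immediately from that theorem.

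For $(H_1)$ I would simply invoke the two spectral results already established. \refP{Prop L1} furnishes the sectorial resolvent estimate for $\L_{1,\mu}$ with angle $\theta_{\L_{1,\mu}} = 2\varepsilon_{\L_{1,\mu}}$, see \eqref{theta L1}, where $\varepsilon_{\L_{1,\mu}}$ is an arbitrarily small fixed positive number, while \refC{Cor L2} gives the estimate for $\L_2$ with angle $\theta_{\L_2} = \pi - \varepsilon_{\L_2}$, see \eqref{theta L2}. Since $\varepsilon_{\L_{1,\mu}}$ is free, I would fix it so small that $2\varepsilon_{\L_{1,\mu}} < \varepsilon_{\L_2}$, which yields
$$\theta_{\L_{1,\mu}} + \theta_{\L_2} = 2\varepsilon_{\L_{1,\mu}} + \pi - \varepsilon_{\L_2} < \pi,$$
so that $(H_1)$ holds. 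Hypothesis $(H_3)$ is structural: the resolvent of $\L_2$ acts pointwise in $t$ as the fixed bounded operator $-(\A+zI)^{-1}$ on $X$, whereas by the representation \eqref{Rep V} the resolvent of $\L_{1,\mu}$ is an integral operator in the variable $t$ whose kernel is \emph{scalar} and hence commutes with any operator acting on $X$; exchanging the $X$-action with the $t$-integration then shows that the two resolvents commute.

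The heart of the matter, and the only place where the standing assumption \eqref{hyp inv sum} enters, is $(H_2)$, i.e. $\sigma(\L_{1,\mu}) \cap \sigma(-\L_2) = \emptyset$. By \refR{Rem valeurs propres L2}, the spectrum $\sigma(-\L_2) = \sigma(\A)$ is the discrete set $\left\{\lambda_j = -z_j^2/\omega^2 : j \geqslant 1\right\} \subset \CC\setminus\RR_+$, where the $z_j \in \CC_+$ are the roots introduced in \refS{Sect Intro}. Using the principal branch and $\omega>0$, I would compute $\sqrt{\lambda_j} = \sqrt{-z_j^2}/\omega$ with $\sqrt{-z_j^2} = \pm i z_j$, the sign being chosen to have nonnegative real part, whence
$$\text{Re}\big(\sqrt{\lambda_j}\big) = \frac{|\text{Im}(z_j)|}{\omega} \geqslant \frac{\tau}{\omega} > \mu,$$
the last inequality being exactly \eqref{hyp inv sum}. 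Together with $\lambda_j \notin \RR_-$ (which holds because $z_j$ has both nonzero real and nonzero imaginary parts, so $z_j^2 \notin \RR_+$), this shows $\lambda_j \in \Pi_\mu$ for every $j$. Since \refP{Prop L1} gives $\Pi_\mu \subset \rho(\L_{1,\mu})$, we conclude $\sigma(-\L_2) \subset \rho(\L_{1,\mu})$, that is, $(H_2)$ holds.

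With $(H_1)$, $(H_2)$ and $(H_3)$ in hand, \refT{Th daprato-grisvard} applies and yields at once that $\L_{1,\mu}+\L_2$ is closable and that $\overline{\L_{1,\mu}+\L_2}$ is boundedly invertible, which is the assertion. I expect the main (though modest) obstacle to be the clean identification of $\sigma(-\L_2)$ through the square-root computation and the verification that $\text{Re}(\sqrt{\lambda_j})$ reduces precisely to $|\text{Im}(z_j)|/\omega$, since this is exactly where the threshold $\tau$ and the hypothesis $\omega\mu < \tau$ become visible; the verification of $(H_1)$ and $(H_3)$ follows directly from the preceding results.
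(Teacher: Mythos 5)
Your proof is correct and takes essentially the same route as the paper's: apply \refT{Th daprato-grisvard} to $\M_1=\L_{1,\mu}$, $\M_2=\L_2$, getting $(H_1)$ from \refP{Prop L1} and \refC{Cor L2} with a suitable choice of the angles, $(H_3)$ from the independence of the $t$- and $\theta$-actions, and $(H_2)$ from the identification $\sigma(-\L_2)=\left\{-z_j^2/\omega^2 : j\geqslant 1\right\}$ in \refR{Rem valeurs propres L2} together with $\text{Re}\big(\sqrt{\lambda_j}\big)=|\text{Im}(z_j)|/\omega>\mu$ under \eqref{hyp inv sum}. If anything, your verification of $(H_2)$ via the inclusion $\sigma(-\L_2)\subset\Pi_\mu\subset\rho(\L_{1,\mu})$ is slightly more careful than the paper's, which invokes an exact description of $\sigma(\L_{1,\mu})$ whereas \refP{Prop L1} only provides the inclusion $\Pi_\mu\subset\rho(\L_{1,\mu})$.
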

\begin{proof}
Assumption $(H_1)$ is satisfied from \refP{Prop L1} and \refC{Cor L2}, with 
$$\theta_{\M_1} + \theta_{\M_2} = \varepsilon_{\L_{1,\mu}} + \pi - \varepsilon_{\L_2},$$
where it suffices to take $\varepsilon_{\L_2} > \varepsilon_{\L_{1,\mu}}$ in order to obtain $\theta_{\M_1} + \theta_{\M_2} < \pi$. 

For assumption $(H_2)$, due to \refP{Prop A inversible}, it follows that $0 \notin \sigma(\L_{1,\mu}) \cap \sigma(-\L_2)$. Moreover, from \refP{Prop L1}, we have
$$\sigma(\L_{1,\mu}) = \{\lambda \in \CC : |\arg(\lambda)| < \pi \quad \text{and} \quad \text{Re}(\sqrt{\lambda}) \leqslant \mu\},$$
and from \refR{Rem valeurs propres L2}, it follows that
$$\sigma(-\L_2) = \{\lambda \in \CC\setminus \RR_+ : \sinh(\omega\sqrt{-\lambda}) = \pm \omega\sqrt{-\lambda}\} = \left\{-\frac{z_j^2}{\omega^2} \in \CC\setminus\RR_+: j \in \NN\setminus\{0\}\right\}.$$
Then, since
$$\text{Re}\left(\sqrt{-\frac{z_j^2}{\omega^2}}\right) = \frac{1}{\omega} \left|\text{Im}(z_j)\right|,$$
the condition $\sigma(\L_{1,\mu}) \cap \sigma(-\L_2) = \emptyset$ is fulfilled if \eqref{hyp inv sum} holds. 

The commutativity assumption $(H_3)$ is clearly verified since the actions of operators $\L_{1,\mu}$ and $\L_2$ are independent. 

Now, applying \refT{Th daprato-grisvard}, we obtain the result. 
\end{proof}

\begin{Rem}
We can conjecture that, for the critical case $\omega\mu = \tau$, the sum $\L_{1,\mu} + \L_2$ is not closable.
\end{Rem}

\subsection{Convexity inequalities}

In view to apply \refC{Cor inv fermeture somme}, we are going to verify inequality \eqref{ineg norme w} in two situations.

\begin{Prop}
Let
\begin{equation*}
\mathcal{E}_1=W^{1,p}(0,+\infty ;X)\subset \mathcal{E=}L^{p}(0,+\infty
;X),
\end{equation*}
and 
$$\E_{2} = L^{p}\left( 0,+\infty ;\left[ W^{3,p}(0,\omega)\cap W_{0}^{2,p}(0,\omega)\right] \times L^p(0,\omega)\right) \subset \mathcal{E}.$$
Then, we have
\begin{equation}\label{L1+L2 dans E1 inter E2}
D\left(\overline{\L_{1,\mu} + \L_2}\right)\subset \E_{1} \cap \E_2.
\end{equation}
\end{Prop}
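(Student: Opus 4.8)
The plan is to apply \refC{Cor inv fermeture somme} twice, once with the pair $(\M_1,\E_1) = (\L_{1,\mu},\E_1)$ and once with $(\M_2,\E_2) = (\L_2,\E_2)$, and then to intersect the two resulting inclusions. Hypotheses $(H_1)$, $(H_2)$, $(H_3)$ have already been verified in \refT{Th L1+L2 barre inversible}, and the chains of embeddings $D(\L_{1,\mu}) \hookrightarrow \E_1 \hookrightarrow \E$ and $D(\L_2) \hookrightarrow \E_2 \hookrightarrow \E$ are immediate from the definitions of the domains (note that $D(\A) = [W^{4,p}(0,\omega)\cap W_0^{2,p}(0,\omega)]\times W_0^{2,p}(0,\omega)$ embeds into the fiber $[W^{3,p}(0,\omega)\cap W_0^{2,p}(0,\omega)]\times L^p(0,\omega)$). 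It therefore remains, in each case, to establish the convexity inequality \eqref{ineg norme w}; I shall take $\delta = \tfrac{1}{2}$ throughout.

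For $i=1$, since $\|V\|_{\E_1}\simeq \|V\|_\E + \|V'\|_\E$, it suffices to control $\|V'\|_\E$. I would start from the one-dimensional convexity inequality on the half-line, valid for every $V\in W^{2,p}(0,+\infty;X)$,
\begin{equation*}
\|V'\|_\E \leqslant C\, \|V\|_\E^{1/2}\, \|V''\|_\E^{1/2}.
\end{equation*}
Because $\L_{1,\mu}V = V'' - 2\mu V' + \mu^2 V$, one has $\|V''\|_\E \leqslant \|\L_{1,\mu}V\|_\E + 2|\mu|\,\|V'\|_\E + \mu^2\|V\|_\E$; inserting this and absorbing the resulting $\|V'\|_\E$ term by Young's inequality gives
\begin{equation*}
\|V'\|_\E \leqslant C\left(\|V\|_\E + \|V\|_\E^{1/2}\,\|\L_{1,\mu}V\|_\E^{1/2}\right),
\end{equation*}
which is exactly \eqref{ineg norme w} for $\M_1 = \L_{1,\mu}$ with $\delta=\tfrac{1}{2}$. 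Hence $D(\overline{\L_{1,\mu}+\L_2})\subset \E_1$.

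For $i=2$ the inequality is pointwise in $t$, so it reduces to proving that for every $\Psi = (\psi_1,\psi_2)\in D(\A)$,
\begin{equation*}
\|\psi_1\|_{W^{3,p}(0,\omega)} + \|\psi_2\|_{L^p(0,\omega)} \leqslant C\left(\|\Psi\|_X + \|\Psi\|_X^{1/2}\,\|\A\Psi\|_X^{1/2}\right).
\end{equation*}
The term $\|\psi_2\|_{L^p(0,\omega)}$ is dominated by $\|\Psi\|_X$ by definition of the $X$-norm. For $\psi_1$, the key observation is that $(\partial_\theta^2+1)^2\psi_1 = \psi_1^{(4)} + 2\psi_1'' + \psi_1$ recovers the top-order derivative modulo lower-order terms; using $[\A\Psi]_2 = -(\partial_\theta^2+1)^2\psi_1 - 2(\partial_\theta^2-1)\psi_2$, I would write
\begin{equation*}
\psi_1^{(4)} = -[\A\Psi]_2 - 2(\partial_\theta^2-1)\psi_2 - 2\psi_1'' - \psi_1.
\end{equation*}
Here $\|[\A\Psi]_2\|_{L^p(0,\omega)} \leqslant \|\A\Psi\|_X$, the terms in $\psi_1$ are controlled by $\|\psi_1\|_{W^{2,p}(0,\omega)}\leqslant\|\Psi\|_X$, and — crucially — $\|\psi_2''\|_{L^p(0,\omega)}\leqslant \|\psi_2\|_{W_0^{2,p}(0,\omega)}\leqslant \|\A\Psi\|_X$, since the first slot of $X$ carries the $W_0^{2,p}$-norm, which here measures the first component $\psi_2$ of $\A\Psi$. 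This yields $\|\psi_1\|_{W^{4,p}(0,\omega)}\leqslant C(\|\A\Psi\|_X + \|\Psi\|_X)$. Combining with the convexity interpolation inequality $\|\psi_1\|_{W^{3,p}}\leqslant C\|\psi_1\|_{W^{2,p}}^{1/2}\|\psi_1\|_{W^{4,p}}^{1/2}$ and $\|\psi_1\|_{W^{2,p}}\leqslant \|\Psi\|_X$ furnishes \eqref{ineg norme w} for $\M_2 = \L_2$ with $\delta=\tfrac{1}{2}$, whence $D(\overline{\L_{1,\mu}+\L_2})\subset \E_2$.

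The main obstacle is the case $i=2$: one must extract genuine $W^{3,p}$-regularity of $\psi_1$ from the graph norm of $\A$, even though $\A$ couples the two components and its second line mixes the fourth-order operator acting on $\psi_1$ with second-order terms in $\psi_2$. The decisive points are that the principal part $(\partial_\theta^2+1)^2$ controls $\psi_1^{(4)}$ up to terms already dominated by $\|\Psi\|_X + \|\A\Psi\|_X$, and that the $W_0^{2,p}$-norm built into the first slot of $X$ precisely supplies the control of $\psi_2''$ needed to close the estimate; a one-variable convexity interpolation between $W^{2,p}$ and $W^{4,p}$ then delivers the intermediate $W^{3,p}$ bound. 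Intersecting the two inclusions yields \eqref{L1+L2 dans E1 inter E2}.
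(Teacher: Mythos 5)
Your proof is correct and follows the paper's skeleton --- apply \refC{Cor inv fermeture somme} with $\delta = \tfrac{1}{2}$ to each of $\L_{1,\mu}$ and $\L_2$, the half-power of the graph norm coming from a Kato-type interpolation inequality --- but both key estimates are derived by genuinely different, more self-contained means. For $\L_{1,\mu}$, the paper does not expand $V''$ by the triangle inequality: it fixes $\lambda_0 \in \rho(\L_{1,\mu})$ and invokes the a priori bound $\|V''\|_{\E} + \|V'\|_{\E} + \|V\|_{\E} \leqslant C\,\|\L_{1,\mu}V - \lambda_0 V\|_{\E}$ for the shifted equation, a maximal-regularity input behind \refP{Prop L1}; your triangle inequality plus Young absorption dispenses with it (the absorption being licit since $\|V'\|_{\E} < +\infty$ for $V \in D(\L_{1,\mu})$). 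For $\L_2$, the paper obtains $\|\psi_1^{(4)}\|_{L^p(0,\omega)} \leqslant C'_{\omega}\,\|\A\Psi\|_X$ from the invertibility of $\A$ (\refP{Prop A inversible}), whereas you isolate $\psi_1^{(4)}$ algebraically from the second component of $\A\Psi$ and control the one dangerous term $\psi_2''$ by $\|\A\Psi\|_X$, using precisely that the first component of $\A\Psi$ equals $\psi_2$ and that the first slot of $X$ carries the $W_0^{2,p}$-norm; this is correct, removes any reliance on \refP{Prop A inversible}, and the weaker bound $\|\psi_1^{(4)}\|_{L^p(0,\omega)} \leqslant C\left(\|\A\Psi\|_X + \|\Psi\|_X\right)$ it yields is all that is needed. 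What the paper's route buys is brevity, since those earlier results are already in place; what yours buys is independence from them. One point you should make explicit: the multiplicative interpolation $\|\psi_1\|_{W^{3,p}(0,\omega)} \leqslant C\,\|\psi_1\|_{W^{2,p}(0,\omega)}^{1/2}\,\|\psi_1\|_{W^{4,p}(0,\omega)}^{1/2}$ holds on a bounded interval only for the \emph{full} Sobolev norms (for the corresponding seminorms it fails, e.g.\ for affine $\psi_1$); it follows from the additive Kato inequality $\|\varphi'\|_{L^p(0,\omega)} \leqslant C_\omega\left(\|\varphi\|_{L^p(0,\omega)} + \|\varphi\|_{L^p(0,\omega)}^{1/2}\,\|\varphi''\|_{L^p(0,\omega)}^{1/2}\right)$ applied to $\varphi = \psi_1''$ --- which is exactly what the paper uses --- combined with $\|\psi_1\|_{W^{2,p}(0,\omega)} \leqslant \|\psi_1\|_{W^{2,p}(0,\omega)}^{1/2}\,\|\psi_1\|_{W^{4,p}(0,\omega)}^{1/2}$; cited or derived in that form, your argument is complete.
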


\begin{proof}
Let $V\in D(\mathcal{L}_{1,\mu})$. We must prove that there exists $\delta \in (0,1)$
such that 
\begin{equation*}
\left\Vert V\right\Vert _{\mathcal{E}_1}\leqslant C\left[ \left\Vert
V\right\Vert _{\mathcal{E}}+\left\Vert V\right\Vert _{\mathcal{E}}^{1-\delta
}\left\Vert \mathcal{L}_{1,\mu}(V)\right\Vert _{\mathcal{E}}^{\delta }\right].
\end{equation*}
For all $V\in W^{2,p}(0,+\infty ;X)$, from Kato \cite{kato}, inequality (1.15), p. 192, we have the convexity inequality
\begin{equation*}
\|V'\|_{\E} \leqslant 2\sqrt{2} \|V\|_{\E}^{1/2} \|V''\|_{\E}^{1/2}.
\end{equation*}
Thus, we deduce that
$$\|V\|_{\E_1} = \|V\|_\E + \|V'\|_\E \leqslant \|V\|_\E +  2\sqrt{2} \|V\|_{\E}^{1/2} \|V''\|_{\E}^{1/2}.$$
Since $\L_{1,\mu}$ is not invertible, we will estimate $\|V''\|_\E$ by $\|\L_{1,\mu}(V) - \lambda_0 V\|_\E$, where $\lambda_0 \in \rho(\L_{1,\mu})$. We have 
$$V''-2\mu V'+(\mu^{2} - \lambda_0)V = \L_{1,\mu}(V) - \lambda_0 V.$$
Then, there exists a constant $C > 0$ such that
$$\|V''\|_\E + \|V'\|_\E + \|V\|_\E \leqslant C\|\L_{1,\mu}(V) - \lambda_0 V\|_\E,$$
hence
$$\|V''\|_\E \leqslant C \|\L_{1,\mu}(V) - \lambda_0 V\|_\E \leqslant C\|\L_{1,\mu}(V)\|_\E + |\lambda_0| C \|V\|_\E.$$
Thus
$$\begin{array}{lll}
\dis \|V\|_{\E_1} = \|V\|_\E + \|V'\|_\E & \leqslant & \dis \|V\|_\E +  2\sqrt{2}\, \|V\|_{\E}^{1/2} \|V''\|_{\E}^{1/2} \\ \ecart
& \leqslant & \dis \dis \|V\|_\E +  2\sqrt{2C}\, \|V\|_{\E}^{1/2} \left(\|\L_{1,\mu}(V)\|_\E + |\lambda_0| \|V\|_\E\right)^{1/2} \\ \ecart
& \leqslant & \dis \dis \|V\|_\E +  2\sqrt{2C}\, \|V\|_{\E}^{1/2} \left(\|\L_{1,\mu}(V)\|_\E^{1/2} + |\lambda_0|^{1/2} \|V\|_\E^{1/2}\right) \\ \ecart
& \leqslant & \dis \dis \left(1 + 2\sqrt{2C}\,|\lambda_0|^{1/2} \right) \|V\|_\E +  2\sqrt{2C}\, \|V\|_{\E}^{1/2} \|\L_{1,\mu}(V)\|_\E^{1/2}.
\end{array}$$
Therefore, inequality \eqref{ineg norme w} is satisfied for $\delta = 1/2$ and $\M_1 = \L_{1,\mu}$. Using \refC{Cor inv fermeture somme}, we obtain 
\begin{equation*}
D\left(\overline{\L_{1,\mu} + \L_2}\right)\subset \E_{1}.
\end{equation*}
Now, we must show that, for all $V\in D(\mathcal{L}_{2})$, we have
\begin{equation*}
\|V\|_{\E_2}\leqslant C \left[\|V\|_{\E} + \|V\|_{\E}^{1/2} \|\L_{2}(V)\|_{\E}^{1/2}\right].
\end{equation*}
To this end, it suffices to do it for $\mathcal{A}$. Set
\begin{equation*}
\mathcal{G}_{1}=\left[ W^{3,p}(0,\omega)\cap W_{0}^{2,p}(0,\omega)\right] \times L^p(0,\omega)\subset X.
\end{equation*}
We must prove that 
\begin{equation*}
\forall \left( 
\begin{array}{c}
\psi _{1} \\ 
\psi _{2}%
\end{array}%
\right) \in D(\mathcal{A}), \quad \left\Vert \left( 
\begin{array}{c}
\psi _{1} \\ 
\psi _{2}
\end{array}
\right) \right\Vert _{\mathcal{G}_{1}}\leqslant C\left[ \left\Vert \left( 
\begin{array}{c}
\psi _{1} \\ 
\psi _{2}
\end{array}
\right) \right\Vert _{X}+\left\Vert \left( 
\begin{array}{c}
\psi _{1} \\ 
\psi _{2}
\end{array}
\right) \right\Vert _{X}^{1/2}\left\Vert \mathcal{A}\left( 
\begin{array}{c}
\psi _{1} \\ 
\psi _{2}
\end{array}\right) \right\Vert _{X}^{1/2}\right].
\end{equation*}
Here, we have
$$\begin{array}{lll}
\left\|\left( \begin{array}{c}
\psi _{1} \\ 
\psi _{2}
\end{array}\right) \right\| _{\mathcal{G}_{1}} &=& \left\|\psi_1\right\|_{W^{3,p}(0,\omega)} + \left\|\psi_{2}\right\|_{L^p(0,\omega)} \\ \ecart
&=& \left\|\psi_1\right\|_{L^p(0,\omega)} + \left\|\psi'_1\right\|_{L^p(0,\omega)} + \left\|\psi''_1\right\|_{L^p(0,\omega)} + \left\|\psi'''_1\right\|_{L^p(0,\omega)} + \left\|\psi_2\right\|_{L^p(0,\omega)}.
\end{array}$$
Set $\varphi = \psi_1''$. Then, for all $\eta > 0$, from Kato \cite{kato}, inequality (1.12), p. 192, taking $n=\eta+1$ and $b-a = \omega$, we obtain 
$$\|\varphi'\|_{L^p(0,\omega)} \leqslant \frac{\omega}{\eta} \|\varphi''\|_{L^p(0,\omega)} + \frac{2}{\omega} \left(\eta + 3 + \frac{2}{\eta}\right)\|\varphi\|_{L^p(0,\omega)}.$$
It is not difficult to see that the second member is minimal when 
$$\eta = \frac{\sqrt{2}}{2} \frac{\left(\|\varphi''\|_{L^p(0,\omega)} + 4 \|\varphi\|_{L^p(0,\omega)}\right)^{1/2}}{\|\varphi\|_{L^p(0,\omega)}^{1/2}}.$$
Therefore, we deduce that
$$\begin{array}{lll}
\|\varphi'\|_{L^p(0,\omega)} & \leqslant & \dis \frac{\omega \sqrt{2}\,\|\varphi\|_{L^p(0,\omega)}^{1/2}\|\varphi''\|_{L^p(0,\omega)}}{\left(\|\varphi''\|_{L^p(0,\omega)} + 4 \|\varphi\|_{L^p(0,\omega)}\right)^{1/2}} + \frac{4}{\omega} \frac{\sqrt{2}\,\|\varphi\|_{L^p(0,\omega)}^{1/2}\|\varphi\|_{L^p(0,\omega)} }{\left(\|\varphi''\|_{L^p(0,\omega)} + 4 \|\varphi\|_{L^p(0,\omega)}\right)^{1/2}} \\ \ecart 

&&\dis + \frac{\sqrt{2}}{\omega} \frac{\left(\|\varphi''\|_{L^p(0,\omega)} + 4 \|\varphi\|_{L^p(0,\omega)}\right)^{1/2}\|\varphi\|_{L^p(0,\omega)} }{\|\varphi\|_{L^p(0,\omega)}^{1/2}} + \frac{6}{\omega} \|\varphi\|_{L^p(0,\omega)} \\ \\

& \leqslant & \dis \frac{\sqrt{2}}{\omega} \left(\|\varphi''\|_{L^p(0,\omega)} + 4 \|\varphi\|_{L^p(0,\omega)}\right)^{1/2} \|\varphi\|_{L^p(0,\omega)}^{1/2} + \frac{6}{\omega} \|\varphi\|_{L^p(0,\omega)} \\ \ecart

&& \dis + \left(\frac{4}{\omega}\|\varphi\|_{L^p(0,\omega)} + \omega \|\varphi''\|_{L^p(0,\omega)}\right) \frac{\sqrt{2}\,\|\varphi\|_{L^p(0,\omega)}^{1/2}}{\left(\|\varphi''\|_{L^p(0,\omega)} + 4 \|\varphi\|_{L^p(0,\omega)}\right)^{1/2}} \\ \\

& \leqslant & \dis C_\omega \left(\|\varphi\|_{L^p(0,\omega)} + \|\varphi\|_{L^p(0,\omega)}^{1/2} \|\varphi''\|_{L^p(0,\omega)}^{1/2}\right).
\end{array}$$
Then, we have
$$\|\psi_1'''\|_{L^p(0,\omega)} \leqslant C_\omega \left(\|\psi_1''\|_{L^p(0,\omega)} + \|\psi_1''\|_{L^p(0,\omega)}^{1/2} \|\psi_1^{(4)}\|_{L^p(0,\omega)}^{1/2}\right).$$
Hence
$$\begin{array}{lll}
\left\|\left( \begin{array}{c}
\psi _{1} \\ 
\psi _{2}
\end{array}\right) \right\| _{\mathcal{G}_{1}} & \leqslant & \left\|\psi_1\right\|_{L^p(0,\omega)} + \left\|\psi'_1\right\|_{L^p(0,\omega)} + \left\|\psi''_1\right\|_{L^p(0,\omega)} \\ \ecart
&& \dis + C_\omega \left(\|\psi_1''\|_{L^p(0,\omega)} + \|\psi_1''\|_{L^p(0,\omega)}^{1/2} \|\psi_1^{(4)}\|_{L^p(0,\omega)}^{1/2}\right) + \left\|\psi_2\right\|_{L^p(0,\omega)} \\ \\

& \leqslant & \left(1 + C_\omega\right) \left\|\psi_1\right\|_{W^{2,p}_0(0,\omega)} + C_\omega  \|\psi_1\|_{W^{2,p}_0(0,\omega)}^{1/2} \|\psi_1^{(4)}\|_{L^p(0,\omega)}^{1/2} + \left\|\psi_2\right\|_{L^p(0,\omega)}
\end{array}$$
Now, since $\A$ is invertible, see \refP{Prop A inversible}, we have proved that there exists a constant $C'_\omega$ depending only on $\omega$ such that
$$\|\psi_1^{(4)}\|_{L^p(0,\omega)} \leqslant C'_\omega \left\|\A\left(\begin{array}{c}
\psi_1 \\
\psi_2
\end{array}\right)\right\|_X.$$
Moreover, it follows
$$\begin{array}{lll}
\left\|\left( \begin{array}{c}
\psi _{1} \\ 
\psi _{2}
\end{array}\right) \right\| _{\mathcal{G}_{1}} & \leqslant & \left(1 + C_\omega\right) \left\|\left(\begin{array}{c}
\psi_1 \\
\psi_2
\end{array}\right)\right\|_{X} + C_\omega C'_\omega \left\|\left(\begin{array}{c}
\psi_1 \\
\psi_2
\end{array}\right)\right\|_{X}^{1/2} \left\|\A\left(\begin{array}{c}
\psi_1 \\
\psi_2
\end{array}\right)\right\|_X^{1/2}.
\end{array}$$
Therefore, inequality \eqref{ineg norme w} is satisfied for $\delta = 1/2$ and $\M_2 = \L_2$. Using \refC{Cor inv fermeture somme}, we obtain 
\begin{equation*}
D\left(\overline{\L_{1,\mu} + \L_2}\right)\subset \E_{2},
\end{equation*}
which gives the expected result.
\end{proof}

\section{Back to the abstract problem}\label{Sect Back to Abstract Pb}

Now, we are in position to solve the following equation
\begin{equation}\label{eq L1+L2}
\left(\overline{\L_{1,\mu} + \L_2}\right) V + k \rho^2 \left(\P_1 + \P_{2,\mu} \right)V = \F.
\end{equation}
\begin{Th}\label{Th exist V strong sol}
Let $\F \in L^p(0,+\infty;X)$ and assume that \eqref{hyp inv sum} holds.  Then, there exists $\rho_0 > 0$ such that for all $\rho \in (0,\rho_0]$, equation \eqref{eq L1+L2} has a unique strong solution $V \in L^p(0,+\infty;X)$, that is
\begin{equation}\label{Def strong solution}
\left\{\begin{array}{l}
\exists\, \left(V_n\right)_{n\geqslant 0} \in D(\L_{1,\mu})\cap D(\L_2) : \\ \\

V_n \underset{n \to +\infty}{\longrightarrow} V \text{ in } L^p(0,+\infty;X) \text{ and}\\ \\

\left(\L_{1,\mu} + \L_2\right) V_n + k \rho^2 \left(\P_1 + \P_{2,\mu} \right)V_n \underset{n \to +\infty}{\longrightarrow} \F \text{ in }L^p(0,+\infty;X),
\end{array}\right.
\end{equation}
satisfying 
\begin{equation}\label{Reg V}
V \in W^{1,p}(0,+\infty;X) \cap L^{p}\left( 0,+\infty ;\left[ W^{3,p}(0,\omega)\cap W_{0}^{2,p}(0,\omega)\right] \times L^p(0,\omega)\right).
\end{equation}
\end{Th}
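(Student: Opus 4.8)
The plan is to treat $k\rho^2(\P_1+\P_{2,\mu})$ as a small perturbation of the operator $S:=\overline{\L_{1,\mu}+\L_2}$, which is invertible by \refT{Th L1+L2 barre inversible}, and to solve the equation by a Neumann series argument, the smallness being provided by the factor $\rho^2$.

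First I would record the mapping properties that make the perturbation admissible. Since $D(\A_0)=X$ with $\A_0\in\L(X)$ and $|e^{-2t}|\leqslant 1$, the operator $\P_1$ is bounded on $\E$; and since $\B_{2,\mu}$ involves only a first-order time derivative, $\P_{2,\mu}$ is bounded from $\E_1=W^{1,p}(0,+\infty;X)$ into $\E$. Hence $\P_1+\P_{2,\mu}\in\L(\E_1,\E)$. On the other side, the estimate established in the proof of \refC{Cor inv fermeture somme} shows that, applied with $\E_1$ and $\E_2$ (see \eqref{L1+L2 dans E1 inter E2}), the integral \eqref{int M1+M2} converges in the norms of $\E_1$ and $\E_2$ with a bound proportional to $\|\xi\|_\E$; this gives that $S^{-1}$ is bounded from $\E$ into $\E_1\cap\E_2$. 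Composing the two, the operator $(\P_1+\P_{2,\mu})S^{-1}$ belongs to $\L(\E)$.

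Next, for $V\in D(S)$ I would rewrite \eqref{eq L1+L2} as $\bigl(I+k\rho^2(\P_1+\P_{2,\mu})S^{-1}\bigr)SV=\F$ and fix $\rho_0>0$ such that $k\rho_0^2\,\|(\P_1+\P_{2,\mu})S^{-1}\|_{\L(\E)}<1$. For every $\rho\in(0,\rho_0]$ the factor $I+k\rho^2(\P_1+\P_{2,\mu})S^{-1}$ is then invertible on $\E$ by a Neumann series, so the operator $T:=S+k\rho^2(\P_1+\P_{2,\mu})$ with domain $D(S)$ is invertible, with $T^{-1}=S^{-1}\bigl(I+k\rho^2(\P_1+\P_{2,\mu})S^{-1}\bigr)^{-1}$ bounded from $\E$ into $\E_1\cap\E_2$. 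Setting $V:=T^{-1}\F\in D(S)$ produces a solution of \eqref{eq L1+L2}, and the regularity \eqref{Reg V} follows immediately from $D(S)\subset\E_1\cap\E_2$.

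It remains to verify that $V$ is a strong solution in the sense of \eqref{Def strong solution} and that such solutions are unique; this is the step I expect to require the most care, because $\P_{2,\mu}$ contains a time derivative and so cannot be passed to the limit using $\E$-convergence alone. Since $V\in D(S)=D(\overline{\L_{1,\mu}+\L_2})$, there is a sequence $(V_n)\subset D(\L_{1,\mu})\cap D(\L_2)$ with $V_n\to V$ and $(\L_{1,\mu}+\L_2)V_n\to SV$ in $\E$. The crucial observation is that each $V_n$ lies in $D(S)$ with $SV_n=(\L_{1,\mu}+\L_2)V_n$, so $V_n=S^{-1}\bigl((\L_{1,\mu}+\L_2)V_n\bigr)$; as $S^{-1}$ is bounded into $\E_1$, this forces $V_n\to V$ in $\E_1$, whence $(\P_1+\P_{2,\mu})V_n\to(\P_1+\P_{2,\mu})V$ in $\E$. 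Passing to the limit yields $(\L_{1,\mu}+\L_2)V_n+k\rho^2(\P_1+\P_{2,\mu})V_n\to SV+k\rho^2(\P_1+\P_{2,\mu})V=\F$, so $V$ satisfies \eqref{Def strong solution}. For uniqueness, if $\widetilde V$ is any strong solution with associated sequence $(\widetilde V_n)$, the same identity $S\widetilde V_n=(\L_{1,\mu}+\L_2)\widetilde V_n$ gives $T\widetilde V_n\to\F$ in $\E$, and the boundedness of $T^{-1}$ then yields $\widetilde V_n=T^{-1}(T\widetilde V_n)\to T^{-1}\F=V$ in $\E$, while $\widetilde V_n\to\widetilde V$; hence $\widetilde V=V$.
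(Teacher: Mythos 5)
Your proposal is correct and takes essentially the same route as the paper: invert $S=\overline{\L_{1,\mu}+\L_2}$ via \refT{Th L1+L2 barre inversible}, use $D(S)\subset\E_1\cap\E_2$ from \eqref{L1+L2 dans E1 inter E2} to get $(\P_1+\P_{2,\mu})S^{-1}\in\L(\E)$, and conclude by a Neumann-series perturbation argument for $\rho$ small, which simultaneously yields \eqref{Reg V}. The only difference is that you explicitly verify the strong-solution property \eqref{Def strong solution} and uniqueness (via $V_n=S^{-1}\bigl((\L_{1,\mu}+\L_2)V_n\bigr)\to V$ in $\E_1$ and the boundedness of $T^{-1}$), a step the paper compresses into the single assertion ``which means that $V$ is the unique strong solution''; your version is a faithful, slightly more detailed rendering of the same argument.
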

\begin{proof}
Due to \refT{Th L1+L2 barre inversible}, if \eqref{hyp inv sum} holds, then $\overline{\L_{1,\mu} + \L_2}$ is invertible. Thus, it follows that
\begin{equation*}
\left[ I + k \rho^2 \left(\P_1 + \P_{2,\mu} \right)\left(\overline{\L_{1,\mu} + \L_2}\right)^{-1} \right]\left(\overline{\L_{1,\mu} + \L_2}\right) V = \F.
\end{equation*}
From \eqref{L1+L2 dans E1 inter E2}, we deduce that $V \in D(\overline{\L_{1,\mu} + \L_2}) \subset \E_1 \cap \E_2$, that is \eqref{Reg V} which involves that 
$$(\P_1 + \P_{2,\mu})(\overline{\L_{1,\mu} + \L_2})^{-1} \in \L(X).$$ 
Then, there exists $\rho_0 > 0$ small enough such that, for all $\rho \in (0,\rho_0]$, we have
\begin{equation}\label{V}
V = \left(\overline{\L_{1,\mu} + \L_2}\right)^{-1}\left[ I + k \rho^2 \left(\P_1 + \P_{2,\mu} \right)\left(\overline{\L_{1,\mu} + \L_2}\right)^{-1} \right]^{-1}\F,
\end{equation}
which means that $V$ is the unique strong solution of \eqref{eq L1+L2}. 
\end{proof}

\section{Proof of \refT{Th principal}}\label{Sect Proof of main Th}

From \refT{Th exist V strong sol}, there exists $\rho_0 > 0$ such that for all $\rho \in (0,\rho_0]$, equation \eqref{eq L1+L2} has a unique strong solution $V \in L^p(0,+\infty;X)$ satisfying \eqref{Reg V}. Then, due to \eqref{Def strong solution}, there exists a sequence $(V_n)_{n \in \NN} \in D(\L_{1,\mu} + \L_2)$ such that $V_n \underset{n \to +\infty}{\longrightarrow} V$ and
$$\lim_{n \to +\infty} \left(\L_{1,\mu} + \L_2\right) V_n + k \rho^2 \left(\P_1 + \P_{2,\mu} \right)V_n = \F.$$
Since $(V_n)_{n \in \NN} \in D(\L_{1,\mu} + \L_2)$, then the previous equality can be written as
\begin{equation}\label{Pb Vn}
\left\{\begin{array}{l}
\dis\lim_{n \to +\infty} \left(V_n''(t) - \A V_n(t) - \F_n(t)\right) = 0 \\ \ecart
\dis\lim_{n \to +\infty}V_n(0)=0, \quad \dis\lim_{n \to +\infty}V_n(+\infty) = 0,
\end{array}\right.
\end{equation}
where
$$\F_n(t) =  k\rho^{2}e^{-2t}\mathcal{A}_{0}V_n(t) + k\rho ^{2}e^{-2t}\left[ (\mathcal{B}_{2,\mu}V_n)\right] (t) + 2\mu V_n'(t) - \mu^2 V_n(t) + \F(t).$$
Since $V_n \underset{n \to +\infty}{\longrightarrow} V$ in $\E$ and $V$ satisfies \eqref{Reg V}, we deduce that 
$$\lim_{n \to +\infty} V_n (0) = V(0) = 0 \quad \text{with} \quad \lim_{n \to +\infty} V_n (+\infty) = V(+\infty) = 0,$$
and 
$$\lim_{n \to +\infty} \F_n (t) = \F_\infty (t) \in L^p(0,+\infty;X),$$
where
$$\F_\infty (t) =  k\rho^{2}e^{-2t} \mathcal{A}_{0}V(t) + k\rho ^{2}e^{-2t}\left[ (\mathcal{B}_{2,\mu}V)\right] (t) + 2\mu V'(t) - \mu^2 V(t) + \F(t).$$
Thus, problem \eqref{Pb Vn} can be written as follows 
\begin{equation*}
\left\{\begin{array}{l}
\dis\lim_{n \to +\infty} \left(V_n''(t) - \A V_n(t)\right) = \F_\infty (t) \\ \ecart
V(0)=0, \quad V(+\infty) = 0.
\end{array}\right.
\end{equation*}
Moreover, from \refP{Prop A BIP}, $\A \in$ BIP\,$(X,\theta_\A)$, with $\theta_\A \in (0,\pi)$ and due to Haase \cite{haase}, Proposition 3.2.1, e), p. 71, it follows that $\sqrt{\A} \in$ BIP\,$(X,\theta_\A /2)$ with $\theta_\A /2 \in (0,\pi/2)$. Therefore, due to Eltaief and Maingot \cite{amine}, Theorem 2, p. 712, with $L_1 = L_2 = - \sqrt{\A}$, there exists a unique classical solution to the following problem
\begin{equation*}
\left\{\begin{array}{l}
\mathcal{V}''(t) - \A \mathcal{V}(t) = \F_\infty (t) \\ \ecart
\mathcal{V}(0)=0, \quad \mathcal{V}(+\infty) = 0,
\end{array}\right.
\end{equation*}
that is
$$\mathcal{V} \in W^{2,p}(0,+\infty;X) \cap L^p(0,+\infty;D(\A)).$$
Thus, it follows that
$$\lim_{n \to +\infty} \left(V_n''(t) - \A V_n(t)\right) = \mathcal{V}''(t) - \A \mathcal{V},$$
hence
$$\lim_{n \to +\infty} \left[ \left(V_n(t) - \mathcal{V}(t)\right)'' - \A \left(V_n(t) - \mathcal{V}(t)\right) \right] = 0.$$
Now, set
$$\left\{\begin{array}{cll}
D(\delta_2) & = & \left\{\varphi \in W^{2,p}(0,+\infty;X) : \varphi(0) = \varphi(+\infty) = 0 \right\} \\ \ecart
\delta_2 \varphi & = & \varphi'', \quad \varphi \in D(\delta_2).
\end{array}\right.$$
Then
\begin{equation}\label{lim Vn-V}
0 = \lim_{n \to +\infty} \left[\left(V_n(t) - \mathcal{V}(t)\right)'' - \A \left(V_n(t) - \mathcal{V}(t)\right) \right] = \lim_{n \to +\infty} -\left(-\delta_2 + \A\right) \left(V_n(t) - \mathcal{V}(t)\right).
\end{equation}
From Prüss and Sohr \cite{pruss-sohr2}, Theorem~C, p.~166-167, it follows that $-\delta_2 \in$ BIP\,$(X,\theta_{\delta_2})$, for every $\theta_{\delta_2} \in (0,\pi)$ and due to \refP{Prop A BIP}, $\A \in$ BIP\,$(X,\theta_\A)$, for all $\theta_\A \in (0,\pi)$. Thus, since $-\delta_2$ and $\A$ are resolvent commuting with $\theta_{\delta_2} + \theta_\A < \pi$, from Prüss and Sohr \cite{pruss-sohr}, Theorem 5, p. 443, we obtain that
$$-\delta_2 + \A \in \text{BIP}\,(X,\theta), \quad \theta = \max(\theta_{\delta_2},\theta_\A).$$
Moreover, due to \refP{Prop A inversible}, we have $0\in \rho(\A)$, then we deduce from Prüss and Sohr \cite{pruss-sohr}, remark at the end of p. 445, that $0 \in \rho(\delta_2 + \A)$. Therefore, due to \eqref{lim Vn-V}, we obtain that
$$\lim_{n \to +\infty} V_n(t) - \mathcal{V}(t) = 0,$$
hence, since $V_n \underset{n \to +\infty}{\longrightarrow} V$, by uniqueness of the limit, we deduce that 
$$V = \mathcal{V} \in W^{2,p}(0,+\infty;X) \cap L^p(0,+\infty;D(\A)).$$
This prove that $\L_{1,\mu} + \L_2$ is closed and that $V \in D(\L_{1,\mu} + \L_2)$.

\section*{Acknowledgments} 

We would like to thank the referee for its valuable comments and corrections which have helped us to improve this second part.

\section*{Conflict of interest}

On behalf of all authors, the corresponding author states that there is no conflict of interest.

\end{document}